\newcommand{\C}{\mathbb{C}}
\newcommand{\N}{\mathbb{N}}
\newcommand{\Mat}{{\rm Mat}}
\newcommand{\Bscr}{\mathscr{B}}
\newcommand{\Sc}{{S^c}}
\newcommand{\sabs}[1]{\left| #1 \right|} 
\newcommand{\abs}[1]{\bigl| #1 \bigr|} 
\newcommand{\norm}[1]{\lVert#1\rVert}
\newcommand{\normtwo}[1]{
{\left\vert\kern-0.25ex\left\vert\kern-0.25ex\left\vert #1 
    \right\vert\kern-0.25ex\right\vert\kern-0.25ex\right\vert} }
\newcommand{\FF}{\mathscr{F}}
\newcommand{\dist}{{\rm dist}}
\newcommand{\supp}{{\rm supp}}
\newcommand{\spec}{{\rm sp}}
\newcommand{\Aop}{\mathscr{A}}
\newcommand{\Qop}{\mathscr{Q}}
\newcommand{\Rop}{\mathscr{R}}
\newcommand{\Dop}{\mathscr{D}}
\newcommand{\Lops}{\mathcal{L}}
\newcommand{\ind}[1]{1_{{#1}}}
\newcommand{\D}{D}
\newcounter{main}
\numberwithin{equation}{section}
\newtheorem{theorem}{Theorem}[section]
\newtheorem{proposition}[theorem]{Proposition}
\newtheorem{lemma}[theorem]{Lemma}
\newtheorem{remark}{Remark}[section]
\newtheorem{definition}{Definition}[section]
\newtheorem{maintheorem}{Theorem}
\newcommand{\blanksquare}{\,\,\,$\sqcup\!\!\!\!\sqcap$}
\title[Isospectral Reduction in Infinite Graphs]{\textbf{Isospectral Reduction in Infinite Graphs}}
\author[P. Duarte]{Pedro Duarte}
\address{CMAF, Departamento de Matem\'{a}tica,
  Faculdade de Ci\^{e}ncias da Universidade de Lisboa, Campo Grande, 1749-016 Lisboa, Portugal.}
\email{pedromiguel.duarte@gmail.com}
\author[M. J. Torres]{Maria Joana Torres}
\address{CMAT and Departamento de Matem\'atica e Aplica\c{c}\~{o}es, 
Universidade do Minho, 
Campus de Gualtar, 
4700-057 Braga, Portugal}
\email{jtorres@math.uminho.pt}
\begin{document}

\maketitle

\begin{abstract}
L. A. Bunimovich and B. Z. Webb developed a theory for 
transforming a finite weighted graph while preserving its  spectrum, referred as isospectral reduction theory.
In this work we extend this theory to a class of operators on Banach spaces that include Markov type operators. 
We apply this theory to infinite countable weighted graphs admitting a 
finite structural set to calculate the stationary measures of a family of countable Markov chains. 
\end{abstract}

\bigskip

{\footnotesize\textbf{Keywords:} \emph{Isospectral graph reduction, Markov operator, eigenvalue problem}  }

\smallskip

{\footnotesize\textbf{2000 Mathematics Subject Classification:} 05C50, 47A75, 47D07}

\bigskip


\section{Introduction}
\label{intro}

L.A. Bunimovich and  B.Z. Webb developed a theory for isospectral 
graph reduction in finite dimensional graphs  
(see~\cite{BW,BW3,BW2}). This procedure maintains the spectrum of the graph's adjacency matrix up to a set of eigenvalues known beforehand from its graph structure. 
More precisely, the authors introduce a concept of transformation of a graph (either by reduction or expansion)
that can be used to simplify the structure of a graph 
while preserving the eigenvalues of the graph's adjacency matrix.
In order to not contradict the fundamental theorem of algebra,  isospectral graph transformations preserve the spectrum of the graph (in particular the number of eigenvalues) by permitting edges to be weighted by functions of a spectral parameter $\lambda$ (see ~\cite[Theorem 3.5.]{BW}).
Thus such transformations allow one to modify the topology of a network (changing the interactions, reducing or increasing the number of nodes),
while maintaining properties related to the network's dynamics.

More recently, in~\cite{DT}, we have proven that isospectral graph reductions also preserve the eigenvectors associated 
with the eigenvalues of the graph's weighted adjacency matrix. We explain how the isospectral reduction procedure can be used to efficiently  update the eigenvector of a large sparse matrix when only a small number of its  entries is modified. As an
application we propose an updating algorithm for the maximal eigenvector of the Markov matrix associated to 
 a large sparse dynamical network.

Because our spectral approach to  isospectral graph reduction theory
is based on {\em eigenvectors}, instead of {\em eigenvalues}, it was a natural question to ask about possible generalizations of this theory to infinite dimensions.

We believe there are many possible such extensions to infinite dimensional models. In this work we develop a couple of abstract settings where such generalizations hold.

The theory applies to a class of bounded linear operators acting on
spaces of $L^1$-integrable functions. The operators considered are written as a sum  of a diagonal plus a Markov operator.
A key concept in Bunimovich-Webb's isospectral theory is that of  a {\em structural set}. In this work we give three  different concepts of structural sets  (see Definitions~\ref{structural:A},~\ref{structural:B} and~\ref{structural:AB}) and for each of them  prove  a corresponding isospectral theorem (see Theorems~\ref{main:A},~\ref{main:B} and~\ref{main:AB}).

The theory developed can be used to handle  a  wide class of examples. 
An application of Theorem~\ref{main:AB} is given to weighted countably infinite graphs with a finite structural set (see Theorem~\ref{main:AB:countable}). 
We also propose a numerical algorithm
to approximate the eigenfunctions of such weighted graphs.
We conclude the manuscript with a concrete application of the theory
to calculate the stationary measures of 
a family of
infinite Markov chains.

The paper is organized as follows:

In Section~\ref{reduction:finite} we describe the isospectral graph reduction theory and the reduction statements for finite graphs.

In Section~\ref{infinitemodel} we generalize the  isospectral graph reduction theory to infinite dimensional models.

In Section~\ref{countablegraphs} we apply the infinite dimension isospectral reduction theory, developed in Section~\ref{infinitemodel}, to countably infinite graphs with a 
finite structural set. 
We also propose a numerical algorithm to approximate the eigenfunctions of such graphs.

In Section~\ref{Markov} we present an example where the theory is applied to give a closed formula for the 
stationary probability measures of a family of infinite 
Markov chains.

\bigskip


\section{Finite graphs}
\label{reduction:finite}

In this section we describe the isospectral graph reduction theory and the reduction statements for finite graphs.

\bigskip

\begin{definition} 
\label{graph:finite} \,
A  {\em finite weighted graph} is a pair
$G=(V,w)$ where 
$V$
is a finite set and 
$w \colon V\times V\to\C$ is any function, called the weight function of $G$. We denote by 
$\Aop=\Aop_w \colon \C^V\to \C^V$ 
the operator defined by the weighted adjacency matrix $(w(i,j))_{i,j \in V}$.
\end{definition}

A {\em path} $\gamma=(i_0,\ldots, i_p)$ in the graph $G=(V,w)$ is an ordered sequence of  vertices 
$i_0,\ldots, i_p \in V$ such that $w(i_\ell,i_{\ell+1}) \neq 0$ for $0 \leq \ell \leq p-1$. The integer $p$ is called the length of   $\gamma$.
If   the vertices $i_0,\ldots, i_{p-1} $ are all distinct
the path $\gamma$ is called {\em simple}.
If $i_0=i_p$ then $\gamma$ is called a {\em closed path}.
A closed path of length $1$ is called a {\em loop}.
Finally, we call {\em cycle} any simple closed path.

If $S \subseteq V$ we will
 write $\Sc=V \setminus S$. 

\bigskip

\begin{definition} [Structural set]
\label{finite:structural} \,
Let $G=(V,w)$. 
A nonempty vertex set  $S \subseteq V$ is a {\em structural set} for $G$ if each cycle of $G$, that is not a loop, contains a vertex in $S$.
\end{definition}

\bigskip

Given a structural set $S$, we call {\em branch of}  $(G,S)$ 
to 
any 
simple path $\beta=(i_0,i_1,\ldots, i_{p-1}, i_p)$  
 such that  $i_1,\ldots, i_{p-1}\in \Sc$ and  $i_0, i_p\in V$.
 We denote by $\Bscr=\Bscr_{G,S}$ the set of all branches of $(G,S)$.
 Given vertices $i,j\in V$, we denote by
$\Bscr_{i j}$  the set of all branches in $\Bscr$ that start in $i$ and end in $j$. 
Define $\Sigma:=\{w(i,i):\, i \in \Sc\}$ and let $\lambda \in \C \setminus \Sigma$.
For each branch $\beta=(i_0,i_1,\ldots, i_p)$ we define
the {\em $\lambda$-weight of $\beta$} as follows:
\begin{equation}\label{weight}
w(\beta,\lambda):= w(i_0,i_1)\,\prod_{\ell=1}^{p-1} \frac{w(i_{\ell},i_{\ell+1})}{\lambda-w(i_{\ell},i_\ell)} \;.
\end{equation}
\bigskip
Given $i,j \in V$ set
\begin{equation}\label{reduced:matrix}
 R_{S,\lambda}(i,j):= \sum_{\beta\in \Bscr_{i j}} w(\beta,\lambda)\;.
\end{equation}
\bigskip
The {\em reduced operator} 
$\Rop_S(\lambda) \colon \C^S \to \C^S$ is the operator with matrix
$(R_{S,\lambda}(i,j))_{i,j \in S}$.

In~\cite{BW} the reduced operator $\Rop_S$ is viewed as a matrix
indexed in $S\times S$ with values in the field $\mathbb{W}[\lambda]$ of all rational functions $f(\lambda)=\frac{p(\lambda)}{q(\lambda)}$, where $p(\lambda)$ and $q(\lambda)$ are polynomials. In their treatment Bunimovich and Webb consider, more generally, weighted adjacency matrices $\Aop$ with  values in the field $\mathbb{W}[\lambda]$ instead of $\C$, so that the reduced matrix $\Rop_S$ lives in the same space of  $\mathbb{W}[\lambda]$-valued matrices.
Given  a matrix $\Aop(\lambda)\in \mathbb{W}[\lambda]^{V\times V}$ its spectrum is defined in~\cite[Definition 3.1]{BW}
by $\spec(\Aop(\lambda)) = P\setminus Q$ 
where $P=\{\lambda\in\C\colon p(\lambda)=0\}$,
$Q=\{\lambda\in\C\colon q(\lambda)=0\}$ and $\det(\Aop(\lambda)-\lambda\,I)= p(\lambda)/q(\lambda)$.
In the context of the previous definitions, 
starting with a complex valued matrix $\Aop\in\C^{V\times V}$,
by~\cite[Corollary 3]{BW}
 the spectrum of the reduced operator $\Rop_S(\lambda)$
 matches the following definition, which is more suitable for our infinite dimensional isospectral reduction.

\begin{definition}
We define the {\em spectrum} of the family of operators
$\Rop_S(\lambda)$, denoted by
$\spec(\Rop_S)$,   to be 
$$\spec(\Rop_S):=\left\{ \lambda\in \C\setminus \Sigma  \colon
\det(\Rop_S(\lambda)-\lambda\,I) = 0  \right\} . $$
\end{definition}

\bigskip

A simplified\footnote{This statement corresponds to~\cite[Corollary 3]{BW} where the  adjacency matrix has complex entries.} version of  Bunimovich-Webb isospectral reduction theorem 
(see~\cite[Theorem 3.5.]{BW})
can be stated as follows:

\begin{maintheorem}[Bunimovich-Webb]
\label{BW thm} \,
Given a structural set $S$ for a graph $G=(V,w)$,
$$ \spec (\Aop)\setminus \Sigma = \spec(\Rop_S) . $$
\end{maintheorem}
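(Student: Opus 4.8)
The plan is to realize $\Rop_S(\lambda)$ as a Schur complement of $\Aop-\lambda I$ and then read off the spectral identity from the multiplicativity of the determinant. Ordering the vertices so that those of $S$ come first, I would write $\Aop$ in block form
\[
\Aop=\begin{pmatrix}\Aop_{SS}&\Aop_{S\Sc}\\[2pt]\Aop_{\Sc S}&\Aop_{\Sc\Sc}\end{pmatrix},
\]
where $\Aop_{XY}$ denotes the submatrix with rows indexed by $X$ and columns by $Y$. The first thing to establish is that $\Aop_{\Sc\Sc}-\lambda I$ is invertible precisely when $\lambda\notin\Sigma$. This is exactly where the structural set hypothesis enters: since every non-loop cycle of $G$ meets $S$, the subgraph induced on $\Sc$ has no cycle through distinct vertices, so its vertices admit a topological ordering in which the off-diagonal part $N$ of $\Aop_{\Sc\Sc}$ is strictly triangular, hence nilpotent. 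In that ordering $\Aop_{\Sc\Sc}-\lambda I$ is triangular with diagonal $(w(i,i)-\lambda)_{i\in\Sc}$, so $\det(\Aop_{\Sc\Sc}-\lambda I)=\prod_{i\in\Sc}(w(i,i)-\lambda)$, which vanishes exactly on $\Sigma$.

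For $\lambda\notin\Sigma$ the Schur complement formula then gives
\[
\det(\Aop-\lambda I)=\det(\Aop_{\Sc\Sc}-\lambda I)\,\det\!\big[(\Aop_{SS}-\lambda I)-\Aop_{S\Sc}(\Aop_{\Sc\Sc}-\lambda I)^{-1}\Aop_{\Sc S}\big].
\]
Since the first factor is nonzero off $\Sigma$, the left-hand determinant vanishes if and only if the determinant of the Schur complement does; it therefore remains to identify that Schur complement with $\Rop_S(\lambda)-\lambda I$. Writing $\Lambda=\diag(\lambda-w(i,i))_{i\in\Sc}$ and $N=\Aop_{\Sc\Sc}-\diag(w(i,i))_{i\in\Sc}$, the nilpotency of $N$ makes the Neumann expansion
\[
(\lambda I-\Aop_{\Sc\Sc})^{-1}=\sum_{k\ge0}(\Lambda^{-1}N)^{k}\,\Lambda^{-1}
\]
a finite sum, whose $(a,b)$ entry enumerates the paths in $\Sc$ from $a$ to $b$, each weighted by a product of factors $w(c_\ell,c_{\ell+1})/(\lambda-w(c_\ell,c_\ell))$.

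The main work — and the step I expect to be the most delicate bookkeeping — is to check that pre- and post-multiplying this series by $\Aop_{S\Sc}$ and $\Aop_{\Sc S}$ converts these $\Sc$-paths into exactly the branches $\beta\in\Bscr_{ij}$ having at least one interior vertex, with the accumulated weight equal to $w(\beta,\lambda)$ from~\eqref{weight}; the remaining length-one branches (direct edges $i\to j$) are supplied by the $\Aop_{SS}$ term. Summing over all branches yields $R_{S,\lambda}(i,j)$ in each off-diagonal slot and $R_{S,\lambda}(i,i)-\lambda$ on the diagonal, so the Schur complement is precisely $\Rop_S(\lambda)-\lambda I$. Combining this with the determinant factorization, for $\lambda\notin\Sigma$ we obtain $\det(\Aop-\lambda I)=0$ if and only if $\det(\Rop_S(\lambda)-\lambda I)=0$; since $\spec(\Rop_S)\subseteq\C\setminus\Sigma$ by definition, this is exactly the asserted equality $\spec(\Aop)\setminus\Sigma=\spec(\Rop_S)$.
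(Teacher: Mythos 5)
Your argument is correct, but it cannot be ``essentially the paper's proof'' for a simple reason: the paper offers no proof of Theorem~\ref{BW thm} at all --- it is quoted from \cite{BW} (Theorem 3.5 and its Corollary 3), where Bunimovich and Webb work with matrices over the field $\mathbb{W}[\lambda]$ of rational functions and argue by induction over sequential single-vertex reductions. Your Schur-complement argument is thus a genuinely different, self-contained route, and it is sound: the structural-set hypothesis makes the loopless digraph induced on $\Sc$ acyclic (any loop-free closed walk contains a cycle of length at least two, which would have to meet $S$), so a topological order renders the off-diagonal part $N$ strictly triangular; this single fact yields both $\det(\Aop_{\Sc\Sc}-\lambda I)=\prod_{i\in\Sc}(w(i,i)-\lambda)$ and the finiteness of the Neumann series. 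The one point you should state explicitly rather than file under ``bookkeeping'' is that the entries of $(\Lambda^{-1}N)^{k}\Lambda^{-1}$ a priori enumerate \emph{walks} in $\Sc$, whereas the branch sets $\Bscr_{ij}$ consist of \emph{simple} paths by definition; it is again the acyclicity of the loopless digraph on $\Sc$ that forces every walk with nonzero weight to be simple, and this is exactly what makes the Schur complement equal to $\Rop_S(\lambda)-\lambda I$ with no overcounting. As for what each approach buys: the inductive argument of \cite{BW} covers the more general setting of $\mathbb{W}[\lambda]$-weighted graphs and sequential reductions, while your block-matrix identity is shorter and is precisely the finite-dimensional shadow of what this paper does in infinite dimensions --- the series \eqref{RS series} defining the reduced kernel is the kernel version of your expansion $\Aop_{S\Sc}\bigl(\sum_{k\ge 0}(\Lambda^{-1}N)^{k}\bigr)\Lambda^{-1}\Aop_{\Sc S}$, and the iteration carried out in the proof of Lemma~\ref{reconst:N} is the same Neumann expansion performed on eigenfunctions.
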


\bigskip

We have stated our reduction results in~\cite[Theorem 1, Proposition 2.1]{DT}  in terms of restriction and extension of eigenvectors.
 The following theorem states that 
isospectral graph reduction  preserves the eigenvectors associated 
with the eigenvalues of the graph's weighted adjacency matrix. 

\bigskip

\begin{maintheorem}[{\cite[Theorem 1]{DT}}] \,
\label{finite:reduction} 
Given a graph $G=(V, w)$, let
$\lambda_0 \in \C \setminus \Sigma$ be an eigenvalue of 
$\Aop=\Aop_w \colon \C^V\to \C^V$ 
and 
$u\in \C^V$
be 
a corresponding eigenvector, 
$\Aop \,u = \lambda_0 u$. Assume that $S$ is a structural set for $G$. Then $\lambda_0$ is also an eigenvalue of $\Rop_S(\lambda_0)$ and $\Rop_S(\lambda_0)\, u_S = \lambda_0 u_S$, where $u_S$ is the restriction of $u$ to $S$.
\end{maintheorem}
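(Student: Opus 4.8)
The plan is to recover $\Rop_S(\lambda_0)$ as a Schur complement obtained by eliminating, from the eigenvalue equation $\Aop\,u=\lambda_0 u$, the coordinates indexed by $\Sc$. Writing $u=(u_S,u_{\Sc})$ according to the splitting $V=S\sqcup\Sc$ and decomposing $\Aop$ into the blocks $\Aop_{S,S},\Aop_{S,\Sc},\Aop_{\Sc,S},\Aop_{\Sc,\Sc}$ built from the entries $w(i,j)$, the equation $\Aop\,u=\lambda_0 u$ becomes the pair
\begin{align}
\Aop_{S,S}\, u_S + \Aop_{S,\Sc}\, u_{\Sc} &= \lambda_0\, u_S, \\
\Aop_{\Sc,S}\, u_S + \Aop_{\Sc,\Sc}\, u_{\Sc} &= \lambda_0\, u_{\Sc}.
\end{align}

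First I would check that $\lambda_0 I-\Aop_{\Sc,\Sc}$ is invertible. The structural-set hypothesis says that every cycle of $G$ other than a loop meets $S$; hence the directed graph induced on $\Sc$ has no cycles apart from loops, so its vertices admit a topological ordering in which $\Aop_{\Sc,\Sc}$ is triangular with diagonal $\diag(w(i,i))_{i\in\Sc}$. Therefore $\det(\lambda_0 I-\Aop_{\Sc,\Sc})=\prod_{i\in\Sc}(\lambda_0-w(i,i))$, which is nonzero precisely because $\lambda_0\notin\Sigma$. Solving the second equation gives $u_{\Sc}=(\lambda_0 I-\Aop_{\Sc,\Sc})^{-1}\Aop_{\Sc,S}\,u_S$, and substituting into the first yields
\begin{equation}
\bigl(\Aop_{S,S}+\Aop_{S,\Sc}(\lambda_0 I-\Aop_{\Sc,\Sc})^{-1}\Aop_{\Sc,S}\bigr)\,u_S=\lambda_0\,u_S.
\end{equation}

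The central step, and the only real work, is to identify this effective operator with $\Rop_S(\lambda_0)$, i.e.\ to prove the matrix identity $\Rop_S(\lambda_0)=\Aop_{S,S}+\Aop_{S,\Sc}(\lambda_0 I-\Aop_{\Sc,\Sc})^{-1}\Aop_{\Sc,S}$. Writing $\Aop_{\Sc,\Sc}=D+N$ with $D=\diag(w(i,i))_{i\in\Sc}$ and $N$ its off-diagonal part, the same acyclicity shows that $N$ is nilpotent, so $(\lambda_0 I-\Aop_{\Sc,\Sc})^{-1}=\sum_{n\ge 0}\bigl((\lambda_0 I-D)^{-1}N\bigr)^n(\lambda_0 I-D)^{-1}$ is a finite sum whose $(a,b)$ entry expands as a sum over directed paths $a=c_0,\dots,c_m=b$ inside $\Sc$, each of the vertices $c_0,\dots,c_m$ contributing a factor $(\lambda_0-w(c_\ell,c_\ell))^{-1}$ to the denominator (the DAG structure forces each such path to be simple). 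Bracketing one of these paths between the edges $i\to c_0$ and $c_m\to j$ supplied by $\Aop_{S,\Sc}$ and $\Aop_{\Sc,S}$ produces exactly a branch $\beta=(i,c_0,\dots,c_m,j)\in\Bscr_{ij}$ together with its weight $w(\beta,\lambda_0)$ as in~\eqref{weight}, while the length-one branches $i\to j$ are accounted for by $\Aop_{S,S}$. Summing over all such paths therefore reproduces $R_{S,\lambda_0}(i,j)=\sum_{\beta\in\Bscr_{ij}}w(\beta,\lambda_0)$, which is the desired identity; this path/branch bookkeeping is the step I expect to be the most delicate.

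Finally I would observe that $u_S\neq 0$: were $u_S=0$, the relation $u_{\Sc}=(\lambda_0 I-\Aop_{\Sc,\Sc})^{-1}\Aop_{\Sc,S}\,u_S$ would force $u_{\Sc}=0$ and hence $u=0$, contradicting that $u$ is an eigenvector. Thus $u_S$ is a nonzero vector with $\Rop_S(\lambda_0)\,u_S=\lambda_0\,u_S$, so $\det(\Rop_S(\lambda_0)-\lambda_0 I)=0$ and $\lambda_0\in\spec(\Rop_S)$, completing the argument. The invertibility and the elimination are routine once the structural-set property is invoked to guarantee the nilpotency of $N$.
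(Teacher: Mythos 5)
Your proof is correct. One remark first: this paper does not actually prove Theorem~\ref{finite:reduction} --- it imports it from~\cite[Theorem 1]{DT}; the closest arguments in the present text are the infinite-dimensional analogues (Lemma~\ref{reconst:N}, Theorems~\ref{main:A} and~\ref{main:B}), and those, like the proof in~\cite{DT}, proceed by \emph{iterated substitution}: the eigenvalue equation on $\Sc$ is solved for $u_{\Sc}$ and re-substituted into itself, producing the branch expansion term by term, with the remainder shown to vanish after finitely many steps (or in the limit) because $S$ is structural; the eigenvector is then reconstructed by recursion on the depth sets $S_n$. You instead package the whole elimination as a Schur complement: acyclicity of $G$ restricted to $\Sc$ gives a topological ordering, hence $\det(\lambda_0 I-\Aop_{\Sc,\Sc})=\prod_{i\in\Sc}(\lambda_0-w(i,i))\neq 0$ for $\lambda_0\notin\Sigma$, the off-diagonal part $N$ is nilpotent, and the finite Neumann series for $(\lambda_0 I-\Aop_{\Sc,\Sc})^{-1}$ is identified, walk by walk, with the branch sums defining $R_{S,\lambda_0}(i,j)$ (your observation that every walk in the DAG on $\Sc$ is automatically simple is exactly what makes this bookkeeping close up, and you correctly handle loops through the diagonal $D$ and the non-injectivity worry through $u_S=0\Rightarrow u_{\Sc}=0$). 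The two routes are at heart the same computation --- unrolling your Neumann series reproduces the paper's iteration~\eqref{structural:conv} --- but they buy different things: your version is shorter and makes both the invertibility of $\lambda_0 I-\Aop_{\Sc,\Sc}$ and the reconstruction formula $u_{\Sc}=(\lambda_0 I-\Aop_{\Sc,\Sc})^{-1}\Aop_{\Sc,S}u_S$ completely transparent in finite dimensions, whereas the paper's iterative, depth-indexed formulation never forms the resolvent on $\Sc$ explicitly and is precisely the formulation that survives the passage to infinite dimensions, where no topological ordering or nilpotency is available and convergence of the series must instead be extracted from the type A/B structural hypotheses.
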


\bigskip

To explain how to reconstruct the eigenvectors of
$\Aop$ from the eigenvectors of the reduced matrix $\Rop_S(\lambda_0)$ we need the following concept of depth of a vertex $i\in V$.

\bigskip

\begin{definition}
\label{finite:depth} \,
The  {\em depth} of a vertex  $i\in V$ is defined recursively as follows.
\begin{enumerate}
\item A vertex $i\in S$ has depth $0$.
\item A vertex $i\in\Sc$ has depth $n$ iff\,  
$i$ has no depth less than $n$, and 
$w(i,j) \neq 0$ implies $j$ has depth $<n$, for all $j\in V$.
\end{enumerate}
\end{definition}

\bigskip

We denote by $S_n$ the set of all vertices of depth $\leq n$.
Because $S$ is a  structural set, every vertex $i$ has a finite depth.

If $\lambda_0 \in \C \setminus \Sigma$ is an eigenvalue of 
$\Aop$,
by Theorem~ \ref{finite:reduction} it is also an eigenvalue of the reduced matrix $\Rop_S(\lambda_0)$.
Knowing the eigenvector $u_S$ of this reduced matrix,
we can recover the corresponding eigenvector of $\Aop$  as follows:

\bigskip

\begin{proposition}[{\cite[Proposition 2.1]{DT}}]
\label{finite:reconstruction} \,
If $\lambda_0 \in \C \setminus \Sigma$ is an eigenvalue of 
$\Aop$  and $u_S=(u^{S}_{i})_{i\in S}$ is an eigenvector of the reduced matrix
$\Rop_S(\lambda_0)$  then the following recursive relations 
\begin{equation}\label{finite:recurs}
\left\{\begin{array}{l}
\smallskip\
u_i  = u^ {S}_{i}  \quad \text{ for } \;  i\in S_0=S\\ \\
\smallskip\
u_\ell  = {\displaystyle \sum_{j\in S_{n-1}} {\frac{w(\ell,j)}{\lambda_0-w(\ell,\ell)}\,u_{j}} } 
\quad \text{ for all }\;
\ell\in S_n\setminus S_{n-1} 
\end{array}\right.
\end{equation}
uniquely determine an eigenvector $u$ of $\Aop$ associated
with  $\lambda_0$.
\end{proposition}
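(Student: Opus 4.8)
The plan is to show that the relations \eqref{finite:recurs} determine a unique vector $u\in\C^V$, and then to verify $\Aop u=\lambda_0 u$ coordinate by coordinate, treating the indices in $\Sc$ and in $S$ separately. For well-definedness and uniqueness I would argue by induction on the depth. Since $S$ is a structural set every vertex has finite depth, so $V=\bigcup_{n\ge 0}S_n$; the first line of \eqref{finite:recurs} fixes $u$ on $S_0=S$, and, assuming $u$ is already determined on $S_{n-1}$, the second line expresses each $u_\ell$ with $\ell\in S_n\setminus S_{n-1}$ as an explicit combination of the values $u_j$, $j\in S_{n-1}$. The denominators $\lambda_0-w(\ell,\ell)$ are nonzero because $\lambda_0\notin\Sigma$, so $u$ is consistently and uniquely defined on all of $V$.

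For a vertex $\ell\in\Sc$ of depth $n\ge 1$ the eigenvalue equation should come out directly from the construction. Multiplying the second relation in \eqref{finite:recurs} by $\lambda_0-w(\ell,\ell)$ gives $(\lambda_0-w(\ell,\ell))\,u_\ell=\sum_{j\in S_{n-1}}w(\ell,j)\,u_j$. By the definition of depth every neighbour $j\neq\ell$ with $w(\ell,j)\neq 0$ already lies in $S_{n-1}$, so the right-hand side equals $\sum_{j\neq\ell}w(\ell,j)\,u_j$; moving the diagonal term across then yields $\sum_{j\in V}w(\ell,j)\,u_j=\lambda_0 u_\ell$, that is $(\Aop u)_\ell=\lambda_0 u_\ell$.

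The heart of the matter, and the step I expect to be the main obstacle, is the verification at a vertex $i\in S$. Here I would use the hypothesis $\Rop_S(\lambda_0)u_S=\lambda_0 u_S$, i.e. $\sum_{j\in S}R_{S,\lambda_0}(i,j)\,u_j=\lambda_0 u_i$, and so reduce the claim $(\Aop u)_i=\lambda_0 u_i$ to the identity
\[
\sum_{k\in V}w(i,k)\,u_k=\sum_{j\in S}R_{S,\lambda_0}(i,j)\,u_j
=\sum_{j\in S}\Big(\sum_{\beta\in\Bscr_{ij}}w(\beta,\lambda_0)\Big)u_j .
\]
The plan for this identity is an ``unfolding'' of the recursion. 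I would prove, by induction on the depth of $k$, that for every $k\in\Sc$ one has
\[
u_k=\sum_{j\in S}\Big(\sum_{\gamma}\ \prod_{\ell=0}^{q-1}\frac{w(\gamma_\ell,\gamma_{\ell+1})}{\lambda_0-w(\gamma_\ell,\gamma_\ell)}\Big)u_j ,
\]
where the inner sum ranges over all simple paths $\gamma=(\gamma_0,\dots,\gamma_q)$ with $\gamma_0=k$, $\gamma_q=j$, and $\gamma_0,\dots,\gamma_{q-1}\in\Sc$. The inductive step merely substitutes the recursion for each first intermediate value and prepends the edge $(k,\gamma_1)$; the key point making this work is that along any such path the depth strictly decreases, which is exactly what the depth definition guarantees, so no vertex repeats and the generated paths are automatically simple and of bounded length.

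Substituting this expansion into $\sum_{k\in V}w(i,k)\,u_k$ and prepending the first edge $(i,k)$ should turn each path $\gamma$ into a branch $\beta=(i,\gamma_0,\dots,\gamma_q)\in\Bscr_{ij}$; because $i\in S$ the first edge contributes the factor $w(i,k)$ with no denominator, which is precisely the convention in definition \eqref{weight}, so $w(i,k)\,\prod_{\ell=0}^{q-1}\frac{w(\gamma_\ell,\gamma_{\ell+1})}{\lambda_0-w(\gamma_\ell,\gamma_\ell)}=w(\beta,\lambda_0)$. Collecting the length-one branches $(i,j)$ with $j\in S$ as the terms $k\in S$, I expect every branch of $\Bscr_{ij}$ to be produced exactly once, giving the displayed identity and hence $(\Aop u)_i=\lambda_0 u_i$. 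The delicate bookkeeping — matching the ``no denominator on the first edge'' convention, ensuring simplicity via the strictly decreasing depth, and checking that each branch is counted once and none is missed — is where the real work lies; the rest is routine once the unfolding lemma is in place.
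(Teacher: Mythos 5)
Your proof is correct, and it is essentially the approach the paper takes: the paper defers this proposition to \cite{DT}, but its own infinite-dimensional generalization (Lemma~\ref{reconst:eq} together with the claim~\eqref{eq:N} inside Lemma~\ref{reconst:N}) follows exactly your two-step strategy — the eigen-equation at vertices of $\Sc$ is read off directly from the recursion, while the rows indexed by $S$ are handled by an unfolding identity. Your sum over simple paths with strictly decreasing depth is the finite-graph incarnation of the iterated kernels $K^{(p)}_{S,\lambda_0}$ appearing there, and your simplicity/depth argument plays the role of the vanishing remainder in~\eqref{structural:conv}.
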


\bigskip

Denote by $\Pi_S \colon \C^V\to\C^S$ the $S$-restriction  projection, and let $\Phi_S \colon \C\setminus \Sigma\to  \Mat_{V\times S}(\C)$ be the function that to each  $\lambda\in \C\setminus \Sigma$  associates the {\em reconstruction operator} $\Phi_S(\lambda) \colon \C^S \to \C^V$ where $u = \Phi_S(\lambda) v $ is recursively defined by
\begin{equation}\label{finite:recurs:operator}
\left\{\begin{array}{l}
\smallskip\
u_i  = v_{i}  \quad \text{ for } \;  i\in S_0=S\\ \\
\smallskip\
u_\ell  = {\displaystyle \sum_{j\in S_{n-1}} {\frac{w(\ell,j)}{\lambda-w(\ell,\ell)}\,u_{j}} } 
\quad \text{ for all }\;
\ell\in S_n\setminus S_{n-1} 
\end{array}\right. .
\end{equation}
These maps are inverse of each other in the sense that
$\Pi_S\circ\Phi_S(\lambda)={\rm id}_{\C^S}$
for all $\lambda\in\C\setminus \Sigma$.
Finally notice that the reconstruction operator
$\Phi_S(\lambda)$ is analytic in $\lambda\in\C\setminus\Sigma$.

\bigskip

The aim of this paper is to extend the reduction statements in Theorem~\ref{BW thm}, Theorem~\ref{finite:reduction} and Proposition~\ref{finite:reconstruction} to  infinite dimensional models.

\bigskip


\section{Infinite dimensional models }
\label{infinitemodel}

In this section we generalize the isospectral graph reduction theory 
to a class of bounded operators acting on $L^1$-spaces, i.e., Banach spaces of  integrable functions. 

Our infinite dimensional models will be defined by data tuples $(V,\FF,\mu, K,d,S)$ where  $(V,\FF,\mu)$ is a measure space, with $\mu$ being a positive $\sigma$-finite measure on $V$,
$K$ is a complex kernel on $V$, $d\colon V \to \C$ is a bounded measurable function   and $S\subseteq V$ is a subset satisfying   appropriate assumptions, referred to in the sequel as a {\em structural set}.
A bounded operator $\Aop\colon L^1(V,\mu)\to L^1(V,\mu)$ is defined by the data $(V,\FF,\mu, K,d)$ while the structural set $S\subseteq V$ 
  determines a {\em reduced operator} $\Rop_S\colon L^1(S)\to L^1(S)$ which will encapsulate the spectral behavior of $\Aop$.

\bigskip

Let  $L^1(V)=L^1(V,\mu)$ denote the Banach space of 
complex $\mu$-integrable  functions with the usual $L^1$ norm 
$$\norm{f}_{1}:=\int_V \vert f\vert\, d\mu.$$
Sometimes we will write $\norm{f}_{L^1(V)}$ instead of $\norm{f}_1$ to emphasize the domain of $f$.
Also, let
$L^\infty(V)$ denote the commutative Banach algebra of  complex bounded $\FF$-measurable functions with the usual sup norm $$\norm{f}_{\infty}:=\sup_{x \in V} \abs{f(x)}.$$

Finally, let $L^{1,\infty}(V\times V,\mu)$ be the space of measurable functions $f\colon V\times V\to\C$ such that
$$ \norm{f}_{1,\infty}:= \sup_{y\in V} \int_V \abs{f(x,y)}\, \mu(dx) 
<+\infty. $$
The functional $f\mapsto \norm{f}_{1,\infty}$ is a seminorm. With it, the quotient of  $L^{1,\infty}(V\times V,\mu)$ by the subspace of measurable functions $f\colon V\times V \to \C$ such that $f(x,y)=0$ for $\mu$-almost every $x\in V$ and for all $y\in V$ becomes a Banach space. As usual we identify 
$L^{1,\infty}(V\times V,\mu)$ with this quotient space and consider $\norm{\cdot}_{1,\infty}$ to be a norm.

\bigskip

\begin{definition} \label{kernel} \,
A  {\em kernel} on $V$ is any function 
$K \colon V\times\FF\to\C$ such that
\begin{enumerate}
\item the function  $B\mapsto K(x,B)$, from $\FF$ to $\C$, is 
a complex measure for any $x\in V$;
\item the function  $x\mapsto K(x,B)$, from $V$ to $\C$, is $\FF$-measurable for any $B\in\FF$.
\end{enumerate}
\end{definition}

\bigskip

In particular a kernel $K$ determines a function
$K \colon V\to \mathcal{M}(V,\C)$ that to each $x\in V$ associates the measure $K_x$. The notation $\mathcal{M}(V,\C)$ stands for the 
space of complex measures on $(V,\FF)$.
We use the following notation for the integral of an $\FF$-measurable function $f\colon V\to\C$ w.r.t. $K_x$
$$\int_V f\, dK_x = \int_V f(y)\, K(x,dy) . $$
We also define the positive  kernel
$ \vert K\vert \colon V \times \FF \to [0,+\infty]$
$$\vert K \vert(x,B) =  \abs{K_x}(B), $$
where $\abs{K_x}(B)$ stands for the 
total variation of  $K_x$  on $B$.
 
\bigskip

\begin{definition}
\label{de kernel with no diagonal}
We say that a kernel $K$ {\em has no diagonal part on a set $B\in\FF$} when
 for all $z\in B$, $K(z, \{z\})=0$.
\end{definition}

\bigskip

\begin{definition} 
\label{boundedkernel} \,
We say that a kernel $K$ is {\em  $(1,\infty)$-bounded}  when there exists a function
$h\in L^{1,\infty}(V\times V,\mu)$ such that
for all $x\in V$ and $B\in\FF$,
$$  K(x,B) = \int_B h(x,y)\, \mu(dy). $$ 
\end{definition}

The density function $h\colon V\times V\to \C$ of the kernel $K$ will be denoted by ${dK}/{d\mu}$.
We topologize the space of $(1,\infty)$-bounded kernels on $V$ with the distance associated with the  $(1,\infty)$-norm of its density function
\begin{equation}
\label{kernel norm}
\norm{K}  :=   \left\lVert{\frac{dK}{d\mu} } \right\rVert_{1,\infty} .
\end{equation}

\bigskip

Given a  
 $(1,\infty)$-bounded kernel
 $K$ and a measurable function $d \in L^\infty(V)$,
consider the operator $\Aop=\Aop_{d,K} \colon L^1(V) \to  L^1(V)$,  
\begin{equation}\label{operator}
(\Aop f)(x):= d(x)f(x) + \int_V f(y)\, K(x,dy)\;. 
\end{equation}
When $d \equiv 0$, kernel $K$ determines an operator
$\Qop=\Qop_K \colon L^1(V) \to  L^1(V)$, 
\begin{equation}
\label{Markov operator}
(\Qop f)(x):=\int_V f(y)\,K(x,dy)\;,
\end{equation}  
that we will refer to as the {\em Markov operator} of $K$.

\bigskip

Given a Banach space $(\mathscr{B},\norm{.})$, we denote by $\mathcal{L}(\mathscr{B})$ the Banach algebra of bounded linear operators on $\mathscr{B}$.  Given $Q \in \mathcal{L}(\mathscr{B})$, the {\em operator norm} of $Q$ is defined by
$\norm{Q}:=\sup_{\norm{f}=1} \norm{Qf}$.
The following proposition is a simple observation.

\begin{proposition}\label{Aop:bounded}
If $K$ is $(1,\infty)$-bounded and $d\in L^\infty(V)$  then $\Aop_{d,K} \in \mathcal{L}(L^1(V))$. 
Moreover, 
$\Aop_{d,K}$
has operator norm
$$ \norm{\Aop_{d,K}} \leq  \norm{d}_\infty + \norm{K}  .$$
In particular also $\Qop_{K}\in \mathcal{L}(L^1(V))$.
\end{proposition}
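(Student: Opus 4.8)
The plan is to estimate the $L^1$ norm of $\Aop_{d,K} f$ directly from the definition~\eqref{operator}, splitting off the diagonal term and the integral term and handling them separately. First I would invoke the $(1,\infty)$-boundedness of $K$ to rewrite the integral against $K_x$ in terms of the density $h = dK/d\mu$, so that
$$(\Aop_{d,K} f)(x) = d(x)\,f(x) + \int_V f(y)\, h(x,y)\, \mu(dy).$$
Linearity in $f$ is immediate from this expression, so the entire content of the proposition is the norm bound together with the well-definedness assertion $\Aop_{d,K} f \in L^1(V)$.

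For the bound I would apply the triangle inequality pointwise in $x$ and then integrate over $V$, obtaining
$$\norm{\Aop_{d,K} f}_1 \leq \int_V \abs{d(x)}\,\abs{f(x)}\, \mu(dx) + \int_V \left| \int_V f(y)\, h(x,y)\, \mu(dy) \right| \mu(dx).$$
The first summand is at most $\norm{d}_\infty \norm{f}_1$, since $\abs{d(x)} \leq \norm{d}_\infty$ for $\mu$-almost every $x$. For the second summand I would bound the inner modulus by $\int_V \abs{f(y)}\,\abs{h(x,y)}\,\mu(dy)$ and then apply Tonelli's theorem to exchange the order of integration, which is legitimate because the integrand is nonnegative and $\mu$ is $\sigma$-finite. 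After the exchange, the inner integral $\int_V \abs{h(x,y)}\, \mu(dx)$ is dominated by $\norm{h}_{1,\infty} = \norm{K}$ uniformly in $y$, leaving $\norm{K}\int_V \abs{f(y)}\,\mu(dy) = \norm{K}\,\norm{f}_1$. Adding the two contributions yields $\norm{\Aop_{d,K} f}_1 \leq \left(\norm{d}_\infty + \norm{K}\right)\norm{f}_1$.

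The main obstacle, such as it is, lies not in the estimate but in the measurability and finiteness issues underlying well-definedness: one must check that $x \mapsto \int_V f(y)\, h(x,y)\,\mu(dy)$ is $\FF$-measurable and finite for $\mu$-almost every $x$. The finiteness of $\norm{K}$ together with the Tonelli computation above shows that the iterated integral of $\abs{f(y)}\,\abs{h(x,y)}$ is finite, so Fubini's theorem guarantees both that the inner integral converges absolutely for a.e.\ $x$ and that the resulting function of $x$ is measurable; this simultaneously confirms $\Aop_{d,K} f \in L^1(V)$. Finally, the assertion for $\Qop_K$ follows by specializing to $d \equiv 0$, which gives $\norm{\Qop_K} \leq \norm{K}$ and in particular $\Qop_K \in \mathcal{L}(L^1(V))$.
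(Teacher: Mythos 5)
Your proof is correct and is exactly the argument the paper intends: the paper omits the proof, calling the proposition ``a simple observation'', and the very same computation---rewriting $K(x,dy)=h(x,y)\,\mu(dy)$, applying the triangle inequality, exchanging the order of integration by Tonelli, and dominating $\int_V \abs{h(x,y)}\,\mu(dx)$ by $\norm{h}_{1,\infty}=\norm{K}$---appears verbatim in the paper's proof that $\Aop_{d,K_n}\to\Aop_{d,K}$ for structural sets of type A quasi-B. Your additional attention to measurability and almost-everywhere finiteness of the integral term is a harmless refinement of the same route.
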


\bigskip

 Throughout the rest of this section we assume that

\begin{enumerate}
\item[(A1)]
 $K$ is a  $(1,\infty)$-bounded kernel on $V$;
\item[(A2)] $d\in L^\infty(V)$.
\end{enumerate}

\bigskip

Given  $S \subseteq V$ we will write $\Sigma_d=\Sigma_d(S):=\overline{d(V\setminus S)}$ (where the overline means topological closure in $\C$). 
Consider on the $\sigma$-algebra $\FF_S:=\{ S\cap B\colon B\in\FF\}$,
 the induced measure $\mu_S\colon \FF_S\to [0,+\infty]$
 defined by
$\mu_S( B):=\mu(B)$. We will write   $L^1(S)=L^1(S,\mu_S)$.

\bigskip

Next we introduce the family of {\em reduced operators} on $L^1(S)$ at a formal level.
Given $\lambda \in \C \setminus \Sigma_d$, 
we define  
$\Rop_S(\lambda)=\Rop_{S,d,K}(\lambda) \colon L^1(S) \to L^1(S)$ by

\begin{equation}\label{infinite:reduced:operator}
(\Rop_S(\lambda)\, f)(x):= d(x) f(x) + \int_S f(y) \, R_{S,\lambda}(x,dy)\;, 
\end{equation}
where, for $B \in \FF_S$,
\begin{equation}
\label{RS series}
R_{S,\lambda}(x,B):= \sum_{n=1}^\infty  K_{S,\lambda}^{(n)}(x,B) \;,
\end{equation}
with $K_{S,\lambda}^{(1)}(x,B) := K(x,B)$  and for $n\geq 2$,
$$ K_{S,\lambda}^{(n)}(x,B):= \int_{\Sc}\cdots \int_{\Sc}
\frac{K(x,dz_1)\,K(z_1,dz_2)\,\cdots\, K(z_{n-1},B)}{\prod_{p=1}^ {n-1} \left(\lambda-d(z_p) \right)}\;. $$

The reduced operator $\Rop_S(\lambda)$ may not be well defined if the series~\eqref{RS series} fails to converge. We will now define two concepts of structural set $S\subseteq V$ for which the operators $\Rop_S(\lambda)$  become well defined. 

\bigskip

Let $S \subseteq V$. Given $x\in V$, $B\in\FF$ and $n \geq 2$  define
the $S$-{\em taboo measure}
$$  \tau_{S,n}(x,B) =\tau_{S, n, K}(x,B):= \int_{S^c} \cdots \int_{S^c} K(x,dz_1) K(z_1,dz_2) \cdots K(z_{n-1},B) .
$$

\bigskip

Let us say that a sequence $t_n>0$ 
 converges to $0$
{\em super exponentially} when 
$$\lim_{n\to \infty}\frac{1}{n}\,\log t_n=-\infty. $$

\bigskip

\begin{definition} [Structural set of type A]
\label{structural:A} \,
A nonempty set  $S \subseteq V$ is called a {\em structural set of type A} for $K$ if and only if there exist a sequence \, $t_n$ \, converging to $0$ super exponentially and a non-negative function 
$M\in L^{1,\infty}(V\times V,\mu)$  such that  \,
$$\abs{\tau_{S,n}(x,B)}\leq t_n\, \int_B M(x,y)\,\mu(dy)$$ 
for all $x\in V$, $B\in\FF$ 
and $n\geq 2$. 
\end{definition}

\bigskip

We call point-set map on  $V$ to any
map $F \colon V\to \mathcal{P}(V)$, where $\mathcal{P}(V)$ stands for the power set of $V$.
We write $F \colon V\ \rightrightarrows V$ to express that $F$ is a point-set map on $V$. We define recursively the iterates of a point-set map $F$
setting $F^0(x):=\{x\}$ and  for all $n\geq 1$
and $x\in V$,
$$ F^n(x) = \cup \{ \, F(y)\, \colon \, y\in F^{n-1}(x)\,\} .$$

\bigskip

\begin{definition}[Structural set of type B]
\label{structural:B} \,
A nonempty set  $S \subseteq V$ is called a {\em structural set  of
type B} for $K$ if  and only if there exists a  measurable function $M\colon \Sc\to [0,+\infty)$ such that 
defining the point-set map 
\, $\varphi_{\Sc}: \Sc \rightrightarrows  \Sc$, 
$$
\varphi_{\Sc}(x):=
S^c \cap {\rm supp}(K_x)
$$
one has
\begin{enumerate}
\item for all $x\in \Sc$, there exists $n \in \N$ such that $(\varphi_{\Sc})^ n(x)=\emptyset$,
\item  for all $x\in\Sc$ and $B\in\FF$, $B\subseteq \Sc$,
$$ \abs{K(x,B)}\leq M(x)\,\mu(B) \;, $$
\item setting $n_S \colon \Sc\to\N$,
$n_S(x):=\min\{\, k\in\N\,:\, (\varphi_{\Sc})^k(x)=\emptyset\,\}$ then
$$\int_{\Sc} n_S(x)\, M(x)\,\mu(dx)<+\infty \;. $$
\end{enumerate}
\end{definition}

\bigskip

\begin{remark}
The concepts of structural sets of type A and B are logically independent. 
\end{remark}

\begin{remark}\label{nodiagonal}
If $(V,K)$ admits a structural set of type A or B then $K$ has no diagonal part on $\Sc$.
Indeed, for structural sets of type A, notice that if $p:=\vert K(z,\{z\}) \vert >0$ with $z\in \Sc$ then
$\vert \tau_{S,n}(z,\{z\}) \vert \geq p^n$ for all $n\in\N$.
More generally, if for some $m\geq 1$
one has  $p:=\vert K^m(z,\{z\}) \vert >0$ with $z\in \Sc$   then
 $\vert \tau_{S,n\,m}(z,\{z\}) \vert \geq p^n$ for all $n\in\N$.
This means the kernel $K$ has no cycles in $\Sc$.
For structural sets of type B, it follows from Definition~\ref{structural:B} (1) that $K$ has no diagonal part on $\Sc$ and also no cycles in $\Sc$.
\end{remark}

\begin{remark}
If $V$ is finite and $S$ is a structural set of type A or B for $K$ then $S$ is a structural set in the sense of Definition~\ref{finite:structural}.
\end{remark}

\bigskip

\begin{definition}[Structural set of type A quasi-B]
\label{structural:AB} \,
A nonempty set  $S \subseteq V$ is called a {\em structural set  of
type A quasi-B}  for $K$ if  and only if  there exists a sequence of 
$(1,\infty)$-bounded kernels
$K_n$, $n \in \N$, such that 
\begin{enumerate}
\item $S$ is a structural set of type A for $K$;
\item $\displaystyle{\lim_{n \to + \infty}} \norm{K-K_n}=0$;
\item $S$ is a structural set  of type B for $K_n$, for all $n \in \N$.
\end{enumerate}
\end{definition}

\bigskip

\begin{proposition}
Assume $V$, $K$ and $d \colon V\to \C$  satisfy (A1)-(A2) and $S$ is a structural set of type A quasi-B  for $K$. Then\;
$\lim_{n\to +\infty} \Aop_{d,K_n}=\Aop_{d,K}$ in $\mathcal{L}(L^1(V))$. 
\end{proposition}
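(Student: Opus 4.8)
The plan is to observe that the diagonal multiplication operator defined by $d$ is identical in both $\Aop_{d,K}$ and $\Aop_{d,K_n}$, so it cancels in the difference and the whole statement reduces to a claim about the Markov operators of the kernels $K$ and $K_n$.

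First I would write, for $f\in L^1(V)$ and $x\in V$,
\[
(\Aop_{d,K}f)(x) - (\Aop_{d,K_n}f)(x) = \int_V f(y)\,(K-K_n)(x,dy),
\]
since the two terms $d(x)f(x)$ coincide. Because $K$ and $K_n$ are both $(1,\infty)$-bounded kernels, their difference $K-K_n$ is again a $(1,\infty)$-bounded kernel: for each fixed $x$ the pointwise difference of complex measures $K_x-(K_n)_x$ is a complex measure, measurability in $x$ is preserved, and the density is $\frac{dK}{d\mu}-\frac{dK_n}{d\mu}$. Hence the right-hand side above is precisely $(\Qop_{K-K_n}f)(x)$, and consequently
\[
\Aop_{d,K}-\Aop_{d,K_n}=\Qop_{K-K_n}\quad\text{in }\mathcal{L}(L^1(V)).
\]

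Next I would invoke Proposition~\ref{Aop:bounded} applied to the operator $\Aop_{0,\,K-K_n}=\Qop_{K-K_n}$, that is, with the diagonal part taken to be $d\equiv 0$. This yields the operator-norm estimate
\[
\norm{\Aop_{d,K}-\Aop_{d,K_n}} = \norm{\Qop_{K-K_n}} \leq \norm{K-K_n},
\]
where on the right we have the kernel norm~\eqref{kernel norm}. Finally, hypothesis~(2) in Definition~\ref{structural:AB} of a structural set of type A quasi-B gives $\norm{K-K_n}\to 0$ as $n\to+\infty$; combining this with the previous inequality shows that $\norm{\Aop_{d,K}-\Aop_{d,K_n}}\to 0$, which is the asserted convergence $\Aop_{d,K_n}\to\Aop_{d,K}$ in $\mathcal{L}(L^1(V))$.

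I do not expect a genuine obstacle here: the argument rests only on the linearity of the integral~\eqref{operator} in the kernel variable, which produces the cancellation of $d$, and on the a priori bound of Proposition~\ref{Aop:bounded} linking the kernel norm to the operator norm. The one point deserving an explicit line is the verification that $K-K_n$ is a bona fide $(1,\infty)$-bounded kernel, so that $\Qop_{K-K_n}$ is defined and the norm estimate applies; this is immediate once the densities are subtracted as indicated above.
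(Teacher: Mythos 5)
Your proposal is correct and follows essentially the same route as the paper: both proofs come down to the estimate $\norm{\Aop_{d,K}-\Aop_{d,K_n}}\leq\norm{K-K_n}$ (after the diagonal terms cancel) combined with Definition~\ref{structural:AB}(2). The only cosmetic difference is that you obtain the bound by applying Proposition~\ref{Aop:bounded} to the difference kernel $K-K_n$ with zero diagonal, whereas the paper re-derives the same inequality by writing out the integrals directly.
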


\begin{proof}
Let 
$K(x, dy) = h(x,y)\,\mu(dy)$
and
$K_n(x, dy) = h_n(x,y)\,\mu(dy)$. 
Given $f \in L^1(V)$ we have that
$$\begin{array}{rcl} 
\displaystyle{\left\lVert{ \Aop_{d,K} f- \Aop_{d,K_n} f}  \right\lVert_{1}} & = & \displaystyle{\int_V\,\sabs{(\Aop_{d,K} f -\Aop_{d,K_n} f) (x) }\,\mu (dx)} \\ \\
{} & = & \displaystyle{\int_V\,\sabs{\int_V (K(x,dy)-K_n(x,dy)) f (y) }\,\mu (dx)} \\ \\
{} & = &\displaystyle{\int_V\,\sabs{\int_V (h(x,y)-h_n(x,y))   \, f (y) \, \mu(dy)}\,\mu (dx)} \\ \\
{} & \leq & \displaystyle{\int_V\,\int_V \sabs{h(x,y)-h_n(x,y)} \sabs{f (y)} \,\mu(dx)\, \mu (dy)} \\ \\
{} & \leq &  \norm{h-h_n}_{1,\infty} \, \norm{f}_{1} =
\norm{K-K_n} \, \norm{f}_{1} . \\
\end{array}
$$

\medskip
\noindent
By Definition~\ref{structural:AB}(2) (of type A quasi-B structural set)  we have that $\norm{K-K_n} \to 0$. 
Thus, $\lim_{n\to +\infty} \Aop_{d,K_n}=\Aop_{d,K}$ in $\mathcal{L}(L^1(V))$.
\end{proof}

\bigskip

\subsection{Reduced operator}\label{Reduced:well:defined}

We shall now prove that for structural sets of type A or type B, the reduced operators $\Rop_S(\lambda)$ defined  by~\eqref{infinite:reduced:operator}  are well defined, bounded  and, moreover, that the function $\Rop_S \colon \C\setminus \Sigma_d \to \Lops(L^1(S))$  is analytic.

\bigskip

\begin{lemma}\label{conv}
Assume $V$, $K$ and $d \colon V\to \C$  satisfy (A1)-(A2) and $S$ is a structural set of type  A. Given $\lambda \in \C \setminus \Sigma_d$, the kernel series $\sum_{n=1}^\infty  K_{S,\lambda}^{(n)}(x,B)$ converges absolutely and uniformly on $S\times \FF_S$.
Moreover, the operators $\Rop_S(\lambda)$
 defined by~\eqref{infinite:reduced:operator}  are bounded.
\end{lemma}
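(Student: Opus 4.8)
The plan is to control the series \eqref{RS series} by bounding each term $K_{S,\lambda}^{(n)}$ in terms of the taboo measures $\tau_{S,n}$, and then to invoke the super-exponential decay furnished by the type-A structural set hypothesis. First I would observe that, by comparing the definitions, the numerator $K(x,dz_1)\,K(z_1,dz_2)\,\cdots\,K(z_{n-1},B)$ integrated over $(\Sc)^{n-1}$ is exactly $\tau_{S,n}(x,B)$, so that $K_{S,\lambda}^{(n)}$ is obtained from $\tau_{S,n}$ by inserting the scalar factor $\prod_{p=1}^{n-1}(\lambda - d(z_p))^{-1}$ inside the integral. Since $\lambda \in \C\setminus\Sigma_d$ and $\Sigma_d = \overline{d(\Sc)}$ is closed, the quantity $\delta := \dist(\lambda,\Sigma_d) = \inf_{z\in\Sc}\abs{\lambda - d(z)} > 0$ is strictly positive, whence $\abs{\lambda - d(z_p)}^{-1} \leq \delta^{-1}$ for every $z_p\in\Sc$. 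This gives the pointwise bound
$$ \abs{K_{S,\lambda}^{(n)}(x,B)} \leq \delta^{-(n-1)}\,\abs{\tau_{S,n}(x,B)} \leq \delta^{-(n-1)}\, t_n \int_B M(x,y)\,\mu(dy), $$
using Definition~\ref{structural:A} in the last step. (The $n=1$ term is just $K$ itself, already $(1,\infty)$-bounded by (A1).)

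Next I would sum over $n$. Integrating the bound above over $B = S$ and taking the supremum over $x$ in the variable of the density shows that each kernel $K_{S,\lambda}^{(n)}$ is $(1,\infty)$-bounded with $\norm{K_{S,\lambda}^{(n)}} \leq \delta^{-(n-1)}\, t_n\, \norm{M}_{1,\infty}$. The series $\sum_{n\geq 2} \delta^{-(n-1)} t_n$ converges because $t_n$ decays super-exponentially: writing $\frac1n\log(\delta^{-(n-1)} t_n) = -\frac{n-1}{n}\log\delta + \frac1n\log t_n \to -\infty$, the terms are eventually dominated by any geometric sequence, e.g. $\delta^{-(n-1)} t_n \leq 2^{-n}$ for large $n$. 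Absolute and uniform convergence on $S\times\FF_S$ of $\sum_n K_{S,\lambda}^{(n)}(x,B)$ then follows from the Weierstrass $M$-test against $\sum_n \delta^{-(n-1)} t_n \int_B M(x,y)\,\mu(dy)$, so $R_{S,\lambda}(x,B)$ is well defined and is itself a $(1,\infty)$-bounded kernel on $S$ with density bounded by $\sum_{n\geq 1}\delta^{-(n-1)} t_n\, M(x,y)$ (interpreting $t_1 := 1$).

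Finally, boundedness of $\Rop_S(\lambda)$ on $L^1(S)$ follows from Proposition~\ref{Aop:bounded}: the operator in \eqref{infinite:reduced:operator} is exactly $\Aop_{d|_S,\, R_{S,\lambda}}$ acting on $L^1(S)$, a diagonal part $d$ (in $L^\infty$ by (A2)) plus the Markov operator of the $(1,\infty)$-bounded kernel $R_{S,\lambda}$, so $\norm{\Rop_S(\lambda)} \leq \norm{d}_\infty + \norm{R_{S,\lambda}} < +\infty$. The main obstacle I anticipate is technical rather than conceptual: justifying that the iterated integral defining $K_{S,\lambda}^{(n)}$ genuinely factors through the total-variation bound on $\tau_{S,n}$, i.e. that pulling the scalar factors $(\lambda-d(z_p))^{-1}$ out under a uniform $\delta^{-1}$ estimate is legitimate at the level of the complex measures and their total variations (via Fubini for the product of $\sigma$-finite kernels, and the fact that $\abs{g\,d\nu} \leq \norm{g}_\infty\,\abs{d\nu}$ for bounded $g$). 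Once that measure-theoretic bookkeeping is in place, the convergence and boundedness are immediate consequences of the super-exponential decay.
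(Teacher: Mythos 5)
Your proposal is correct and follows essentially the same route as the paper's proof: bound the scalar factor $\prod_{p}(\lambda-d(z_p))^{-1}$ by $\dist(\lambda,\Sigma_d)^{-(n-1)}$ (the paper phrases this via the sets $\Omega_r$), invoke the type-A estimate $\abs{\tau_{S,n}(x,B)}\leq t_n\int_B M(x,y)\,\mu(dy)$, sum the resulting geometric-times-super-exponential series (the paper uses the ratio test where you use geometric domination), and conclude the operator bound $\norm{\Rop_S(\lambda)}\leq \norm{d}_\infty + C\,\norm{M}_{1,\infty}$ exactly as the paper does.
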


\begin{proof} 
For each $r>0$, define the open set
$\Omega_r:=\{\lambda\in\C\colon 
\dist(\lambda,\Sigma_d)>r\}$, so that
$\C\setminus \Sigma_d=\cup_{r>0}\Omega_r$.
Given $\lambda\in\Omega_r$
and a list of points 
$\bar z= (z_1,\ldots, z_{n-1})\in (\Sc)^{n-1}$,
the analytic function
\begin{equation*}
 f_{\bar z}(\lambda):= \frac{1}{\prod_{p=1}^ {n-1} \left(\lambda-d(z_p)\right)}
\end{equation*}
is bounded by $r^{-(n-1)}$.  
Therefore, given $x\in S$ and $B\in\FF_S$
$$\abs{K_{S,\lambda}^{(n)}(x,B)}\leq 
\frac{\abs{ \tau_{S,n}(x,B)} }{r^{n-1}} \leq 
\frac{ t_{n}}{r^{n-1}}\,\int_B M(x,y)\,\mu(dy)  \,.$$
But since 
the sequence $t_n \searrow 0$
super exponentially,
applying d'Alembert's criterion (ratio test) we can conclude that 
$$C_r:= \sum_{n=1}^\infty \frac{ t_{n}}{r^{n-1}}<+\infty .$$
From the previous bound on $\abs{K_{S,\lambda}^{(n)}(x,B)}$ we infer that for all  $f\in L^1(S)$
$$ \int \abs{ \int f(y) \,   K_{S,\lambda}^{(n)}(x,dy) } \, \mu(dx) \leq \frac{ t_{n}}{r^{n-1}}\, \norm{f}_{L^1(S)} \,\norm{M}_{1,\infty}.$$
Hence the Markov operator defined by $K_{S,\lambda}^{(n)}$ is in $\Lops(L^1(S))$ with norm bounded by 
$\frac{t_{n}}{r^{n-1}}\, \norm{M}_{1,\infty}$.
A straightforward calculation shows that the reduced operator
$\Rop_S(\lambda)$ has norm 
$\norm{ \Rop_S(\lambda) }  \leq \norm{d}_\infty + C_r\,\norm{M}_{1,\infty}$,
 for $\lambda\in\Omega_r$.  
\end{proof}

\bigskip

\begin{proposition}\label{holoI}
Assume $V$, $K$ and $d \colon V\to \C$  satisfy (A1)-(A2) and $S$ is a structural set of type  A.
Then the  function $\Rop_S \colon \C\setminus \Sigma_d \to \Lops(L^1(S))$  is analytic.
\end{proposition}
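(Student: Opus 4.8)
The plan is to exhibit $\Rop_S(\lambda)$ as a locally uniformly convergent series of operator-valued functions, each of which is analytic, and then appeal to the standard fact that such limits are analytic. Recall from \eqref{infinite:reduced:operator} that $\Rop_S(\lambda) = D_S + \sum_{n=1}^{\infty} Q_n(\lambda)$, where $D_S$ denotes multiplication by $d$ on $L^1(S)$ (independent of $\lambda$) and $Q_n(\lambda)$ is the Markov operator on $L^1(S)$ built from the kernel $K_{S,\lambda}^{(n)}$, that is $(Q_n(\lambda)f)(x) = \int_S f(y)\,K_{S,\lambda}^{(n)}(x,dy)$. The term $Q_1$ equals the $\lambda$-independent operator with kernel $K$ restricted to $S\times S$, so together with $D_S$ it contributes an analytic (indeed constant) summand; the whole difficulty is concentrated in the tail $\sum_{n\ge 2}Q_n(\lambda)$.

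First I would isolate the $\lambda$-dependence of each $Q_n(\lambda)$. Splitting $L^1(V)=L^1(S)\oplus L^1(S^c)$ and writing $Q_{AB}$ for the Markov operator from $L^1(B)$ to $L^1(A)$ determined by $K$ (each bounded, with norm $\le \norm{K}$, for $A,B\in\{S,S^c\}$), Fubini's theorem --- legitimate because the $(1,\infty)$-boundedness of $K$ and the type-A control of $\tau_{S,n}$ make all the iterated integrals absolutely convergent --- yields the factorization $Q_n(\lambda)=Q_{SS^c}\,(D_\lambda^{-1}Q_{S^cS^c})^{n-2}\,D_\lambda^{-1}\,Q_{S^cS}$ for $n\ge 2$, where $D_\lambda^{-1}$ is multiplication by $1/(\lambda-d(\cdot))$ on $L^1(S^c)$. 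The only $\lambda$-dependent factor is $D_\lambda^{-1}$. Since $d(z)\in\Sigma_d$ for $z\in S^c$, the function $m_\lambda(z):=1/(\lambda-d(z))$ is bounded by $r^{-1}$ on $\Omega_r$, and a direct estimate shows its difference quotients converge to $-1/(\lambda-d(\cdot))^2$ uniformly in $z\in S^c$; hence $\lambda\mapsto m_\lambda$ is analytic into $L^\infty(S^c)$, and composing with the isometric embedding of $L^\infty(S^c)$ into $\Lops(L^1(S^c))$ as multiplication operators shows $\lambda\mapsto D_\lambda^{-1}$ is analytic. As each $Q_n(\lambda)$ is then a finite product of one analytic factor with fixed bounded operators and multiplication in $\Lops$ is continuous and bilinear, every $Q_n(\lambda)$ is analytic on $\C\setminus\Sigma_d$.

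Finally I would transfer analyticity through the limit by uniform convergence. Lemma~\ref{conv} provides, for $\lambda\in\Omega_r$ and $n\ge 2$, the bound $\norm{Q_n(\lambda)}\le (t_n/r^{n-1})\,\norm{M}_{1,\infty}$, and the super-exponential decay of $t_n$ gives $\sum_{n\ge 2}t_n/r^{n-1}<+\infty$; by the Weierstrass $M$-test in the Banach space $\Lops(L^1(S))$ the series $\sum_{n\ge 2}Q_n(\lambda)$ converges uniformly in operator norm on each $\Omega_r$. A uniform limit of analytic Banach-space-valued functions is analytic (pass to the limit in the Cauchy integral formula, or invoke Morera's theorem for vector-valued holomorphic maps), so $\Rop_S=D_S+Q_1+\sum_{n\ge 2}Q_n$ is analytic on every $\Omega_r$. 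Because $\C\setminus\Sigma_d=\bigcup_{r>0}\Omega_r$ and analyticity is a local property, this establishes analyticity of $\Rop_S$ on all of $\C\setminus\Sigma_d$.

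The step I expect to require the most care is the term-wise analyticity for $n\ge 2$: one must justify the operator-product factorization of $Q_n(\lambda)$ via Fubini, and, more importantly, upgrade the evident pointwise analyticity of the resolvent factor $D_\lambda^{-1}$ to genuine operator-norm analyticity. Everything downstream --- the uniform bound and the passage of analyticity through a locally uniform limit --- is routine once Lemma~\ref{conv} is in hand.
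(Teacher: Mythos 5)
Your proof is correct, and it follows the same skeleton as the paper's---express $\Rop_S(\lambda)$ as a series of operator-valued terms, establish term-wise analyticity, then transfer analyticity through locally uniform convergence on each $\Omega_r$---but both steps are carried out differently. For the terms, the paper differentiates under the iterated integral: it bounds the derivative of the scalar factor $f_{\bar z}(\lambda)=\prod_{p=1}^{n-1}(\lambda-d(z_p))^{-1}$ by $n\,r^{-n}$ on $\Omega_r$ and concludes that each $\mathscr{K}_n(\lambda)$ is analytic with $\norm{\tfrac{d}{d\lambda}\mathscr{K}_n(\lambda)}\leq n\,t_n\,r^{-n}\norm{M}_{1,\infty}$; you instead factor $Q_n(\lambda)=Q_{S S^c}\,(D_\lambda^{-1}Q_{S^c S^c})^{n-2}\,D_\lambda^{-1}\,Q_{S^c S}$, which concentrates all $\lambda$-dependence in the single resolvent factor and reduces everything to the elementary analyticity of $\lambda\mapsto D_\lambda^{-1}$. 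The factorization is legitimate, though note that the Fubini interchange needs only the $(1,\infty)$-boundedness of $K$ (integrate absolute values starting from the source variable of each kernel, which yields finiteness of the full joint integral); the type-A control of $\tau_{S,n}$, which you also cite there, bounds the value of the signed iterated integral rather than its absolute convergence, and it is really needed only in the convergence step. For the limit, the paper proves uniform convergence on $\Omega_r$ of the differentiated series $\sum_n \tfrac{d}{d\lambda}\mathscr{K}_n(\lambda)$, whereas you apply the Weierstrass $M$-test to the series itself via Lemma~\ref{conv}'s bound $\norm{Q_n(\lambda)}\leq t_n\,r^{-(n-1)}\norm{M}_{1,\infty}$ and invoke the vector-valued Weierstrass/Morera theorem. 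Both are sound: your route buys conceptual clarity (no differentiation under an $(n-1)$-fold iterated integral) and uses the more standard limit theorem, while the paper's stays entirely inside the kernel formalism and yields explicit bounds on the derivative series as a by-product. One harmless imprecision on your side: the multiplication embedding of $L^\infty(S^c)$ into $\Lops(L^1(S^c))$ is a contraction rather than an isometry (isometric only for the essential sup norm), but boundedness of this linear map is all your argument requires.
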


\begin{proof}
Consider the open sets $\Omega_r$, $r>0$, and the
analytic function $f_{\bar z}(\lambda)$, 
$\bar z\in (\Sc)^{n-1}$,
introduced in the proof of Lemma~\ref{conv}.
Differentiating $f_{\bar z}(\lambda)$ in $\lambda$ we get
$$ f_{\bar z}'(\lambda)=
-\left( \prod_{p=1}^{n-1}  \frac{1}{\lambda-d(z_p)}\right) \, \sum_{p=1}^{n-1} \frac{1}{\lambda-d(z_p)} , $$
with  $\vert f_{\bar z}'(\lambda)\vert \leq n\,r^{-n}$,  for all $\lambda\in\Omega_r$.

For each $n\in\N$ consider the operator
$\mathscr{K}_n(\lambda) \colon L^1(S)\to L^1(S)$ defined by

\begin{align*}
(\mathscr{K}_n(\lambda)\, h)(x) &:=  \int_S  h(y) \, K_{S,\lambda}^{(n)} (x,dy) \\
&=  \int_S \int_{S^c}\cdots \int_{S^c}  h(y) \, f_{z_1,\ldots, z_{n-1}}(\lambda)\, K(x,dz_1)\,K(z_1,dz_2)\,\cdots\, K(z_{n-1},dy) .
\end{align*}
By the previous bound, arguing as in Lemma~\ref{conv},
these operators are analytic with
$$ \norm{\frac{d}{d\lambda}\mathscr{K}_n(\lambda)}
\leq n\, \frac{ t_{n}}{r^{n}} \, \norm{M}_{1,\infty}\,, $$
for all $\lambda\in\Omega_r$.
Thus, by d'Alembert's criterion,
 the series of analytic functions
$\lambda\mapsto \sum_{n=1}^\infty \frac{d}{d\lambda}\mathscr{K}_n(\lambda)\in \Lops\left(L^1(S)\right)$ converges uniformely on $\Omega_r$. This proves that
$\lambda \mapsto \Rop_S(\lambda) $
is analytic on $\C\setminus \Sigma_d$.
\end{proof}

\bigskip

\begin{lemma}\label{conv:bounded}
Assume $V$, $K$ and $d \colon V\to \C$  satisfy (A1)-(A2) and $S$ is a structural set of type  B.
Given $\lambda \in \C \setminus \Sigma_d$, the kernel series $\sum_{n=1}^\infty  K_{S,\lambda}^{(n)}(x,B)$ converges absolutely and uniformly on $S\times \FF_S$.
Moreover, the operators $\Rop_S(\lambda)$
 defined by~\eqref{infinite:reduced:operator}  are bounded.
\end{lemma}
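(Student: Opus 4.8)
The plan is to follow the structure of the proof of Lemma~\ref{conv}, replacing the super-exponential decay of the taboo measures (which is not available for type B) by the pointwise nilpotency of $\varphi_{\Sc}$ guaranteed by Definition~\ref{structural:B}(1). Fix $r>0$ and $\lambda\in\Omega_r:=\{\lambda\in\C:\dist(\lambda,\Sigma_d)>r\}$, so that $\abs{\lambda-d(z)}^{-1}\le r^{-1}$ for every $z\in\Sc$, and write $K(x,dy)=h(x,y)\,\mu(dy)$. The first step is the crude path bound, valid for $x\in S$, $B\in\FF_S$ and $n\ge2$,
\[
\abs{K_{S,\lambda}^{(n)}(x,B)}\le r^{-(n-1)}\int_{\Sc}\!\cdots\!\int_{\Sc}\abs{h}(x,z_1)\Bigl(\prod_{k=1}^{n-2}\abs{h}(z_k,z_{k+1})\Bigr)\abs{K}(z_{n-1},B)\,\mu(dz_1)\cdots\mu(dz_{n-1}).
\]

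The key is to keep the support information along the path. For $z\in\Sc$ the measure $K_z$ meets $\Sc$ only on $\varphi_{\Sc}(z)$, so Definition~\ref{structural:B}(2) together with $\abs{h}(z,\cdot)=0$ a.e.\ off $\varphi_{\Sc}(z)$ gives $\abs{h}(z,w)\le M(z)\,\ind{\varphi_{\Sc}(z)}(w)$ for a.e.\ $w\in\Sc$. Moreover $w\in\varphi_{\Sc}(z)$ forces $n_S(w)<n_S(z)$, since $(\varphi_{\Sc})^{\,n_S(z)-1}(w)\subseteq(\varphi_{\Sc})^{\,n_S(z)}(z)=\emptyset$. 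Hence each interior factor may be replaced by $M(z_k)\,\ind{\{n_S(z_{k+1})<n_S(z_k)\}}$, so that every contributing path is strictly descending in depth: $n_S(z_1)>n_S(z_2)>\cdots$.

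Next I integrate out the two endpoints using the $(1,\infty)$-boundedness of $K$. Since $\int_S\abs{h}(x,z_1)\,\mu(dx)\le\norm{K}$ uniformly in $z_1$ and $\int_{\Sc}\abs{h}(z_{n-1},y)\,\mu(dz_{n-1})\le\norm{K}$ uniformly in $y$, the level-$n$ Markov operator $\mathscr{K}_n(\lambda)$ (as in the proof of Proposition~\ref{holoI}) obeys $\norm{\mathscr{K}_n(\lambda)}\le\norm{K}^2\,r^{-(n-1)}C_{n-2}$, where
\[
C_j:=\int_{\Sc}\!\cdots\!\int_{\Sc}\prod_{k=1}^{j}M(z_k)\prod_{k=1}^{j-1}\ind{\{n_S(z_{k+1})<n_S(z_k)\}}\,\mu(dz_1)\cdots\mu(dz_j),\qquad C_0:=1.
\]
Summing over $n$ reduces everything to the finiteness of the descending-chain series $J:=\sum_{j\ge0}r^{-j}C_j$, as $\sum_{n\ge2}\norm{\mathscr{K}_n(\lambda)}\le\norm{K}^2\,r^{-1}J$.

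The crux is to evaluate $J$. Writing $m_k:=\int_{\{z\in\Sc:\,n_S(z)=k\}}M\,d\mu$, a strictly descending chain of depths is nothing but a finite subset of $\{1,2,3,\dots\}$, so expanding $C_j$ by depth levels yields $C_j=\sum_{k_1>\cdots>k_j\ge1}\prod_{i=1}^{j}m_{k_i}$ and therefore the product formula $J=\prod_{k\ge1}(1+r^{-1}m_k)\le\exp\bigl(r^{-1}\sum_{k\ge1}m_k\bigr)=\exp\bigl(r^{-1}\int_{\Sc}M\,d\mu\bigr)$. Because each depth level can be used at most once, no geometric smallness of $r^{-1}$ is required, and this is exactly why convergence persists for every $\lambda\in\C\setminus\Sigma_d$; the finiteness $\sum_k m_k=\int_{\Sc}M\,d\mu\le\int_{\Sc}n_S\,M\,d\mu<\infty$ comes from Definition~\ref{structural:B}(3). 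This summable bound dominates the series of Markov operators in operator norm; the same estimates, applied as in Lemma~\ref{conv}, yield the absolute and uniform convergence of the kernel series on $S\times\FF_S$, and adding the $n=1$ contribution $\norm{\mathscr{K}_1(\lambda)}\le\norm{K}$ and the diagonal part $d$ shows that $\Rop_S(\lambda)$ is bounded. The main obstacle is precisely this last step: realizing that the depth-descending constraint converts the naively divergent geometric series into the convergent product $\prod_{k}(1+r^{-1}m_k)$.
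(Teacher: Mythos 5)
Your proof is correct, and its skeleton matches the paper's: both arguments rest on the fact that along any path inside $\Sc$ the depth strictly decreases (you state it as $w\in\varphi_{\Sc}(z)\Rightarrow n_S(w)<n_S(z)$; the paper encodes the same fact by restricting the $n$-th iterated integral to the nested sets $D_j=\{z\in\Sc : n_S(z)\geq j\}$), and both use Definition~\ref{structural:B}(2) on the interior kernel factors and the $(1,\infty)$-norm of $K$ on the two endpoint factors, arriving at a bound of the form $\norm{K}^2 r^{-(n-1)}\times(\text{interior mass})$. Where you genuinely diverge is in summing the series. The paper bounds the interior mass by the product $\prod_{j}\norm{M}_{L^1(D_j)}$ and invokes d'Alembert's ratio test, using that $\norm{M}_{L^1(D_n)}\to 0$, which follows from $\sum_n \int_{D_n}M\,d\mu=\int_{\Sc}n_S\,M\,d\mu<\infty$. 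You instead retain the strict-descent constraint, decompose by depth levels, recognize $C_j$ as the $j$-th elementary symmetric function of the level masses $m_k$, and sum exactly:
\begin{equation*}
\sum_{j\geq 0} r^{-j}C_j \;=\; \prod_{k\geq 1}\bigl(1+r^{-1}m_k\bigr)\;\leq\;\exp\Bigl(r^{-1}\textstyle\int_{\Sc}M\,d\mu\Bigr).
\end{equation*}
Both mechanisms work for every $r>0$ (the paper's ratios also tend to $0$ with no smallness assumption on $r^{-1}$, so your parenthetical suggestion that the product formula is what rescues convergence for all $\lambda$ is not a real advantage in range); what your route buys is an explicit closed-form bound for $\norm{\Rop_S(\lambda)}$ and the observation that only $\int_{\Sc}M\,d\mu<\infty$, rather than the $n_S$-weighted integral of Definition~\ref{structural:B}(3), is needed for this lemma; what the paper's route buys is brevity and reuse of the d'Alembert pattern already set up in Lemma~\ref{conv}. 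One caveat, which applies equally to the paper's own proof: your estimates, like the paper's, control quantities of the type $\sup_{B\in\FF_S}\int_S\abs{K_{S,\lambda}^{(n)}(x,B)}\,\mu(dx)$, i.e. kernel and operator norms, which is exactly what the boundedness of $\Rop_S(\lambda)$ requires; the pointwise-in-$x$ uniform convergence asserted in the statement is not literally established by either argument.
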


\begin{proof} 
Define 
$\D_n=\{\, z\in \Sc\,:\, n_S(z)\geq n \,\}$, where $n_S(z)$ is the function introduced in Definition~\ref{structural:B}(3). Given $B\in\FF_S$ and $x\in S$ one has
$$ K_{S,\lambda}^{(n)}(x,B)  = \displaystyle{\int_{\D_1}\cdots \int_{\D_{n-1}}
\frac{K(x,dz_1)\,K(z_1,dz_2)\,\cdots\, K(z_{n-1},B)}{\prod_{p=1}^ {n-1} \left(\lambda-d(z_p) \right)}\,}.
$$

 Consider the open sets $\Omega_r$, $r>0$,  
 introduced in the proof of Lemma~\ref{conv}.
From (A1), arguing as in the proof of Lemma~\ref{conv},
for all $\lambda\in\Omega_r$ 
 we have that
\begin{align}\label{bound}
\int \abs{K_{S,\lambda}^{(n)}(x,B)}\,\mu(dx) &\leq  
\frac{\norm{K}^2}{r^{n-1}}
\, \left(\int_{D_1} M(z_1)\, \mu(dz_1) \right)  \, \cdots \,  \left(\int_{D_{n-2}} M(z_{n-2})\, \mu(dz_{n-2}) \right)  \,  \mu(B) \,
\nonumber
\\
&=
 \frac{\norm{K}^2}{r^{n-1}}
\,\norm{M}_{L^1(D_1)}   \, \cdots \,  \norm{M}_{L^1(D_{n-2})}  \, \mu(B)\,.
\end{align}
But since 
$$\sum_{n=1}^ \infty \int_{D_{n}} M\,d\mu = \int_{\Sc} n_S(x)\,  M(x)\,\mu(dx)<+\infty, $$
applying d'Alembert's criterion (ratio test) we  conclude that  
$$C_r:= \sum_{n=1}^\infty \frac{\norm{K}^2}{r^{n-1}}\, \left(\int_{D_1} M\, d\mu \right) \, \cdots \,  \left(\int_{D_{n-2}} M\, d\mu \right) <+\infty. $$
A straightforward calculation shows that the operator
$\Rop_S(\lambda)\colon L^1(S) \to L^1(S)$ has norm 
$\norm{ \Rop_S(\lambda) } 
\leq \norm{d}_\infty + C_r$, for $\lambda\in\Omega_r$.
\bigskip

\end{proof}

\bigskip

\begin{proposition}\label{holoI:bounded}
Assume $V$, $K$ and $d \colon V\to \C$  satisfy (A1)-(A2) and $S$ is a structural set of type  B.
Then the function $\Rop_S \colon \C\setminus \Sigma_d \to \Lops(L^1(S))$  is analytic.
\end{proposition}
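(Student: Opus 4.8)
The plan is to reproduce the argument of Proposition~\ref{holoI} in its overall structure, substituting the type-A estimates there by the type-B estimates established in Lemma~\ref{conv:bounded}. Concretely, I would reuse the open sets $\Omega_r=\{\lambda\in\C\colon \dist(\lambda,\Sigma_d)>r\}$, whose union over $r>0$ exhausts $\C\setminus\Sigma_d$, the analytic functions $f_{\bar z}(\lambda)=\prod_{p=1}^{n-1}(\lambda-d(z_p))^{-1}$ for $\bar z=(z_1,\ldots,z_{n-1})\in(\Sc)^{n-1}$, and the operators $\mathscr{K}_n(\lambda)\colon L^1(S)\to L^1(S)$ already introduced in that proof. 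Since analyticity is a local property and $\C\setminus\Sigma_d=\cup_{r>0}\Omega_r$, it suffices to prove that $\lambda\mapsto\Rop_S(\lambda)$ is analytic on each $\Omega_r$; and since the multiplication-by-$d$ term is constant in $\lambda$, I only need to treat the series $\sum_{n\geq1}\mathscr{K}_n(\lambda)$, which carries the $\lambda$-dependent part of $\Rop_S(\lambda)$. For this I would show that this series converges (already granted by Lemma~\ref{conv:bounded}) and that the series of termwise derivatives converges uniformly on $\Omega_r$.

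The bound on the scalar factors is insensitive to the type of the structural set: differentiating $f_{\bar z}$ yields $|f_{\bar z}'(\lambda)|\leq n\,r^{-n}$ on $\Omega_r$, exactly as in Proposition~\ref{holoI}. Feeding this into the definition of $\mathscr{K}_n(\lambda)$ and then localizing the inner integrations to the taboo sets $D_j=\{z\in\Sc\colon n_S(z)\geq j\}$ precisely as in Lemma~\ref{conv:bounded}, I would obtain the estimate
\begin{equation*}
\norm{\frac{d}{d\lambda}\mathscr{K}_n(\lambda)}\leq n\,\frac{\norm{K}^2}{r^{n}}\,\norm{M}_{L^1(D_1)}\cdots\norm{M}_{L^1(D_{n-2})},
\end{equation*}
valid for all $\lambda\in\Omega_r$. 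This is the type-B analogue of the bound $\norm{\frac{d}{d\lambda}\mathscr{K}_n(\lambda)}\leq n\,t_n\,r^{-n}\,\norm{M}_{1,\infty}$ used in the type-A case.

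It then remains to sum the right-hand sides. The key input is condition (3) of Definition~\ref{structural:B}, which in the form $\sum_{n\geq1}\int_{D_n}M\,d\mu=\int_{\Sc}n_S\,M\,d\mu<+\infty$ already powered the convergence of $C_r$ in Lemma~\ref{conv:bounded}; in particular the successive taboo masses satisfy $\int_{D_n}M\,d\mu\to0$. Applying d'Alembert's ratio test to the above bounds, the quotient of consecutive terms is asymptotically $\frac{n+1}{n}\cdot\frac{1}{r}\,\norm{M}_{L^1(D_{n-1})}\to0$, so the extra polynomial factor $n$ produced by differentiation is harmless and the derivative series converges uniformly on $\Omega_r$. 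Since a uniformly convergent series of analytic $\Lops(L^1(S))$-valued functions whose termwise derivatives converge uniformly has an analytic sum, $\Rop_S$ is analytic on $\Omega_r$, and hence on $\C\setminus\Sigma_d$. The only genuine obstacle is exactly this last convergence point --- verifying that the polynomial factor $n$ arising from the derivative cannot spoil summability --- and it is dispatched, just as in the type-A argument, by the vanishing of the masses $\int_{D_n}M\,d\mu$, which here comes from summability rather than from super-exponential decay.
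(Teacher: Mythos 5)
Your proposal is correct and follows essentially the same route as the paper's own proof: it reuses the sets $\Omega_r$, the functions $f_{\bar z}(\lambda)$ and the operators $\mathscr{K}_n(\lambda)$ from Proposition~\ref{holoI}, replaces the type-A estimate by the type-B bound~\eqref{bound} from Lemma~\ref{conv:bounded}, and concludes uniform convergence of the derivative series on $\Omega_r$ via d'Alembert's criterion, powered by $\int_{D_n} M\,d\mu \to 0$. The only cosmetic difference is your exponent $r^{n}$ versus the paper's $r^{n-1}$ in the derivative bound, which is immaterial for the convergence argument (and your version is in fact the more faithful analogue of the type-A computation).
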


\begin{proof}
This proof is very similar to the one of Proposition~\ref{holoI}.
Consider the open sets $\Omega_r$, $r>0$,   
introduced in the proof of Lemma~\ref{conv}
and the operators 
$\mathscr{K}_n(\lambda)$, $n \in \N$, 
introduced in the proof of Proposition~\ref{holoI}.
By the previous bound in~(\ref{bound}), arguing as in the proof of Proposition~\ref{holoI},
these operators are analytic with
$$ \norm{\frac{d}{d\lambda}\mathscr{K}_n(\lambda)} 
\leq n\, \frac{\norm{K}^2}{r^{n-1}}\, \norm{M}_{L^1(D_1)}\, \cdots \,  \norm{M}_{L^1(D_{n-2})} , 
$$
for all $\lambda\in\Omega_r$.
Thus, by d'Alembert's criterion,
 the series of analytic functions
$\lambda\mapsto \sum_{n=1}^\infty \frac{d}{d\lambda}\mathscr{K}_n(\lambda)\in \Lops\left(L^1(S)\right)$ converges uniformely on $\Omega_r$. This proves that
$\lambda \mapsto \Rop_S(\lambda) $
is analytic on $\C\setminus \Sigma_d$.
\end{proof}

\bigskip

\subsection{ Type B isospectral reduction }
In this subsection we prove a isospectral reduction theorem for structural sets of type B.

For such  structural sets  the following key definition  captures the idea of depth  of a point in $V$ (see  Definition~\ref{finite:depth}).

\begin{definition}
\label{infinite:depth} \,
Assume $V$, $K$ and $d \colon V\to \C$  satisfy (A1)-(A2) and $S$ is a structural set of type  B.
We define, recursively in $n\in\N$, a measurable subset $S_n\subseteq V$ which can be regarded as the set of states $x \in V$ of depth  $\leq n$.
\begin{enumerate}
\item $S_0:=S$,
\item $S_n:=\{ x\in V \colon  \vert K\vert (x,V\setminus S_{n-1})=0 \}\cup S_{n-1}$.
\end{enumerate}
\end{definition}

\bigskip

\begin{proposition}
\label{prop Sn cover}
The sets $S_n$ cover $V$, i.e., \, $\displaystyle V= \cup_{n\geq 0}  S_n$.
\end{proposition}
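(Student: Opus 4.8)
The plan is to first record that the family $\{S_n\}$ is increasing, since by construction $S_{n-1}\subseteq S_n$, and then to reduce the statement to showing that every $x\in\Sc$ lies in some $S_n$ (the points of $S=S_0$ being already covered). The natural index to assign to such an $x$ is the integer $n_S(x)$ supplied by the type-B structure in Definition~\ref{structural:B}(3). Accordingly, I would prove by induction on $n\in\N$ the inclusion
\begin{equation*}
\{\, x\in\Sc \colon n_S(x)\leq n\,\}\subseteq S_n .
\end{equation*}
Once this is established, condition (1) of Definition~\ref{structural:B} guarantees that $n_S(x)$ is finite for every $x\in\Sc$, whence $x\in S_{n_S(x)}\subseteq\cup_{n\geq 0}S_n$; together with $S=S_0\subseteq\cup_{n\geq 0}S_n$ this yields $V=\cup_{n\geq 0}S_n$.

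For the induction the base case $n=0$ is immediate. For the inductive step, take $x$ with $n_S(x)\leq n$; if $n_S(x)\leq n-1$ then $x\in S_{n-1}\subseteq S_n$ by the induction hypothesis and monotonicity, so we may assume $n_S(x)=n$. Here I would invoke the semigroup identity $(\varphi_{\Sc})^n(x)=\cup\{\,(\varphi_{\Sc})^{n-1}(y)\colon y\in\varphi_{\Sc}(x)\,\}$, which follows by induction from the recursive definition of the iterates of a point-set map. Since $n_S(x)=n$ gives $(\varphi_{\Sc})^n(x)=\emptyset$, we get $(\varphi_{\Sc})^{n-1}(y)=\emptyset$, i.e. $n_S(y)\leq n-1$, for every $y\in\varphi_{\Sc}(x)$. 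By the induction hypothesis $\varphi_{\Sc}(x)\subseteq S_{n-1}$, that is
\begin{equation*}
\Sc\cap\supp(K_x)\subseteq S_{n-1}.
\end{equation*}

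The crucial — and, I expect, the only delicate — step is to convert this support inclusion into the total-variation condition $\vert K\vert(x,V\setminus S_{n-1})=0$ defining membership in $S_n$. By monotonicity $S=S_0\subseteq S_{n-1}$, so $V\setminus S_{n-1}\subseteq\Sc$, and the inclusion above then forces $(V\setminus S_{n-1})\cap\supp(K_x)=\emptyset$: any $z$ in this intersection would lie in $\Sc\cap\supp(K_x)\subseteq S_{n-1}$, a contradiction. Because the total variation measure $\vert K_x\vert$ assigns no mass to the complement of $\supp(K_x)$, I conclude $\vert K\vert(x,V\setminus S_{n-1})=\vert K_x\vert(V\setminus S_{n-1})=0$, hence $x\in S_n$, completing the induction. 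The main obstacle is precisely this translation, as it rests on $\vert K_x\vert$ vanishing off $\supp(K_x)$ — a property valid once the support is understood as a set with $\vert K_x\vert$-null complement, which is exactly the reading under which the maps $\varphi_{\Sc}$ and $n_S$ are well defined in Definition~\ref{structural:B}. The remaining ingredients, the monotonicity of $\{S_n\}$ and the finiteness of $n_S$, are direct consequences of the definitions.
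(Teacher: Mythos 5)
Your proof is correct and follows essentially the same route as the paper's: an induction on depth that converts the inclusion $\Sc\cap\supp(K_x)\subseteq S_{n-1}$ into the total-variation condition $\vert K\vert(x,V\setminus S_{n-1})=0$ defining $S_n$. The paper organizes the induction along the orbit of a fixed point (showing $\varphi_{\Sc}^{n-i}(x)\subseteq S_i$ for $i=1,\dots,n$, with $n=n_S(x)$) and splits the key step into the cases $\mu(\varphi_{\Sc}(y))=0$ (using absolute continuity of $K_y$ w.r.t.\ $\mu$) and $\mu(\varphi_{\Sc}(y))>0$, whereas you induct globally on the depth level and treat the step uniformly; both arguments rest on the same density-based reading of $\supp(K_x)$, namely that $\vert K_x\vert$ vanishes off $\supp(K_x)$, which you flag explicitly.
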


\begin{proof}
Given $x\in V$, let $n=n_S(x)$
be as in Definition~\ref{structural:B}(3).
We will prove by induction 
that $\varphi_{\Sc}^{n-i}(x)\subseteq S_i$ for all  $i= 1,\dots, n$. 
Then, taking $i=n$, this shows that $\{x\}=\varphi_{\Sc}^0(x)\subseteq S_n$.

Let $y\in \varphi_{\Sc}^{n-1}(x)$.
Then $\varphi_{\Sc}(y)={\rm supp}(K_y) \cap \Sc=\emptyset$, which implies $\vert K\vert(y,\Sc)=0$,
and hence $\vert K\vert(y,V\setminus S_0)=\vert K\vert(y,\Sc)= 0$. Thus $y\in S_1$,
which proves the claim for $i=1$.

Assume, by induction hypothesis, that
$\varphi_\Sc^{n-i+1}(x)\subseteq S_{i-1}$, and let $y\in \varphi_\Sc^{n-i}(x)$, which implies that 
$\varphi_\Sc(y)\subseteq \varphi_\Sc^{n-i+1}(x)\subseteq S_{i-1}$.

We consider two cases:

First assume that $\mu(\varphi_\Sc(y))=0$. In this case, because $K_y$ is absolutely continuous w.r.t. $\mu$, we have that $\vert K\vert (y,\Sc)=0$.
Thus $\vert K \vert (y,V\setminus S_0) = \vert K \vert (y,\Sc) = 0$, and consequently $y\in S_1\subseteq S_i$.

Assume now that $\mu(\varphi_\Sc(y))>0$.
Since $\vert K(y,z)\vert >0$ for $\mu$-a.e. $z\in \supp( K_y)$,  and
$\varphi_\Sc(y)\subseteq S_{i-1}$, we have that
$\supp(K_y)\subseteq S_{i-1}$ and so
$\vert K \vert (y,V\setminus S_{i-1}) = 0$.  Therefore $y\in S_i$.

This proves that $\varphi_\Sc^{n-i}(x)\subseteq S_{i}$, and concludes the inductive argument.
\end{proof}

\bigskip

Given $u \in L^1(V)$, we define the functions $u_{S}$ and $u_{\Sc}$ in $L^1(V)$,
$$ u_{S}(x):=\left\{ 
\begin{array}{crr}
u(x) & \text{ if } & x\in S\\
0 & \text{ if } & x\in \Sc
\end{array}\right.\qquad
 u_{\Sc}(x):= \left\{ \begin{array}{crr}
u(x) & \text{ if } & x\in \Sc\\
0 & \text{ if } & x\in S
\end{array}\right. .$$
From now on, whenever appropriate we will identify $u_S$ with a function in $L^1(S)$.

\bigskip

\begin{definition}
The {\em spectrum} of the family of operators
$\Rop_S(\lambda)$, denoted by
$\spec(\Rop_S)$,  is the set of $\lambda\in \C\setminus\Sigma_d$ such that $\Rop_S(\lambda)-\lambda\,I$ is a non invertible
operator on $L^1(S)$.
\end{definition}

\bigskip

\begin{theorem}\label{main:B}
Assume $V$, $K$ and $d \colon V\to \C$  satisfy (A1)-(A2) and $S$ is a structural set of type  B.
Then
\begin{enumerate}
\item $ \spec(\Aop)\setminus \Sigma_d \subseteq \spec(\Rop_S) $.
\item Given $\lambda_0\in \C\setminus\Sigma_d$,
$\lambda_0$ is an eigenvalue of $\Aop$ iff 
$\lambda_0$ is an eigenvalue of $\Rop_S(\lambda_0)$.
\item If $\lambda_0 \in \C \setminus \Sigma_d$   is an eigenvalue of $\Aop$  and $u\in L^1(V)$ is an associated eigenfunction,
$\Aop\, u=\lambda_0\, u$, then  
$\Rop_S(\lambda_0)\, u_S=\lambda_0\, u_S$, i.e.,
$u_S$ is the corresponding eigenfunction for $\Rop_S(\lambda_0)$.

\item If $\lambda_0\in \C \setminus \Sigma_d$ is an eigenvalue of $\mathscr{R}_S(\lambda_0)$  and $v$ is an associated eigenfunction,
$\Rop_S(\lambda_0)\, v=\lambda_0\, v$,  then the following recursive relations 
\begin{equation}\label{recurs:I}
\left\{\begin{array}{l}
u(x)  =  v(x)  \quad \text{ for } \;  x\in S_0=S\\ \\
{\displaystyle u(z)  = \int_{ S_{n-1}} {\frac{K(z,dy)}{\lambda_0 - d(z)}\,u(y)} } 
\quad \text{ for all }\;
z\in S_n\setminus S_{n-1} 
\end{array}\right.
\end{equation}
uniquely determine an eigenfunction $u\in L^1(V)$ 
such that
$\Aop\, u=\lambda_0\, u$.
\end{enumerate}
\end{theorem}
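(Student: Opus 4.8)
The plan is to realize $\Rop_S(\lambda)$ as a Schur complement of $\lambda I-\Aop$ and to read off all four assertions from a block factorization. First I would use the decomposition $L^1(V)=L^1(S)\oplus L^1(S^c)$ (via the multiplication projections onto $S$ and $S^c$) to write $\Aop$ in block form with bounded blocks $A_{SS},A_{SS^c},A_{S^cS},A_{S^cS^c}$; here $A_{SS^c}$ and $A_{S^cS}$ are purely off-diagonal Markov pieces, $A_{SS}$ carries both the diagonal $d$ and the direct $S$-to-$S$ kernel $K_{S,\lambda}^{(1)}$, and $A_{S^cS^c}=D_{S^c}+Q_{S^cS^c}$, where $D_{S^c}$ is multiplication by $d$ on $S^c$ and $Q_{S^cS^c}$ is the Markov operator of the restricted kernel.

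The crucial preliminary step is to invert the $S^c$-corner. For $\lambda\in\C\setminus\Sigma_d$ the operator $\lambda I-D_{S^c}$ is boundedly invertible because $\dist(\lambda,\Sigma_d)>0$, and I would factor $\lambda I-A_{S^cS^c}=(\lambda I-D_{S^c})\bigl(I-(\lambda I-D_{S^c})^{-1}Q_{S^cS^c}\bigr)$. Expanding the second factor as a Neumann series, the operator norm of its $m$-th term is controlled exactly as in the proof of Lemma~\ref{conv:bounded}: the type B nilpotency of $\varphi_{S^c}$ confines the $m$-fold kernel composition to the nested sets $D_1,\dots,D_m$, and the summability $\int_{S^c}n_S\,M\,d\mu<+\infty$ forces the series of norms to converge. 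Hence $(\lambda I-A_{S^cS^c})^{-1}$ exists and is bounded, and matching this Neumann expansion term-by-term with the taboo kernel series~\eqref{RS series} identifies $A_{SS^c}(\lambda I-A_{S^cS^c})^{-1}A_{S^cS}$ with the Markov operator of $\sum_{n\ge2}K_{S,\lambda}^{(n)}$, so that
$$\Rop_S(\lambda)=A_{SS}+A_{SS^c}(\lambda I-A_{S^cS^c})^{-1}A_{S^cS}.$$
I expect this identification, rather than any later step, to be the main obstacle, since it is where the two descriptions of $\Rop_S$ (operator resolvent versus kernel series) must be reconciled.

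With this in hand I would derive the block factorization $\Aop-\lambda I=L\,\mathrm{diag}\bigl(\Rop_S(\lambda)-\lambda I,\;A_{S^cS^c}-\lambda I\bigr)\,U$ with $L,U$ unipotent block-triangular, hence boundedly invertible. Since $A_{S^cS^c}-\lambda I$ is invertible for $\lambda\notin\Sigma_d$, the factor $\Aop-\lambda_0 I$ is invertible whenever $\Rop_S(\lambda_0)-\lambda_0 I$ is; the contrapositive is exactly statement (1). For (3), given $\Aop u=\lambda_0 u$ the $S^c$-block equation reads $(\lambda_0 I-A_{S^cS^c})u_{S^c}=A_{S^cS}u_S$, so $u_{S^c}=(\lambda_0 I-A_{S^cS^c})^{-1}A_{S^cS}u_S$; substituting into the $S$-block equation and using the displayed identity yields $\Rop_S(\lambda_0)u_S=\lambda_0 u_S$. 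Statement (2) then follows: if $u_S=0$ the formula for $u_{S^c}$ forces $u=0$, so an eigenfunction of $\Aop$ produces a nonzero eigenfunction $u_S$ of $\Rop_S(\lambda_0)$, while the converse is statement (4).

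For (4) I would set $u_S:=v$ and $u_{S^c}:=(\lambda_0 I-A_{S^cS^c})^{-1}A_{S^cS}v\in L^1(S^c)$, so that $u\in L^1(V)$, the $S^c$-equation holds by construction, and the $S$-equation holds by the Schur identity, giving $\Aop u=\lambda_0 u$. It remains to recognize this $u$ as the one produced by~\eqref{recurs:I}: expanding $u_{S^c}$ through the Neumann series and using that a point $z\in S_n\setminus S_{n-1}$ has $K_z$ supported on $S_{n-1}$ (Definition~\ref{infinite:depth}(2)) shows that the recursion computes $u(z)$ from the values already assigned on $S_{n-1}$. Because $V=\bigcup_{n\ge0}S_n$ by Proposition~\ref{prop Sn cover} and $\varphi_{S^c}$ is nilpotent, each such sum is finite and the layer-by-layer prescription determines $u$ uniquely, matching~\eqref{recurs:I} and completing the argument.
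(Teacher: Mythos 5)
Your proposal is correct, and it reaches the theorem by a genuinely different organization than the paper, even though the analytic engine is the same. The paper never writes a block factorization: it proceeds through three reconstruction lemmas --- Lemma~\ref{L1} (the recursion with an inhomogeneous term $f$ defines an $L^1$ function, via invertibility of $I-\Dop\Qop$ by Neumann series), Lemma~\ref{reconst:eq} (the fixed-point equation is equivalent to $u=v$ on $S$ together with $(\Aop-\lambda_0 I)u=f$ on $\Sc$), and Lemma~\ref{reconst:N}, whose key claim~\eqref{eq:N}, proved by iterated substitution, is exactly your identification of $A_{SS^c}(\lambda_0 I-A_{S^cS^c})^{-1}A_{S^cS}$ with the tail $\sum_{p\ge 2}K^{(p)}_{S,\lambda_0}$ of the series~\eqref{RS series} --- and then it proves item (1) by establishing surjectivity (via reconstruction) and injectivity (via item (2)) of $\Aop-\lambda_0 I$ separately. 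Your Neumann inversion of $\lambda I-A_{S^cS^c}$ is precisely the paper's inversion of $I-\Dop\Qop$, since $(\lambda I-D_{S^c})^{-1}Q_{S^cS^c}$ is the restriction of $\Dop\Qop$ to functions supported on $\Sc$, and the type-B estimates you invoke are those of Lemmas~\ref{conv:bounded} and~\ref{L1}; so the step you single out as the main obstacle is indeed the counterpart of Lemma~\ref{reconst:N}. What the Schur packaging buys you: all four items fall out of one identity; membership of the reconstructed function in $L^1(V)$ is automatic (it is the image of a bounded operator, so no separate analogue of Lemma~\ref{L1} is needed); and, notably, the factorization gives the two-sided statement $\spec(\Aop)\setminus\Sigma_d=\spec(\Rop_S)$, because invertibility of the block-diagonal middle factor is equivalent to invertibility of each block --- this is stronger than the stated inclusion in (1) and than Remark~\ref{spec=}, which asserts equality only for finite $S$. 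What the paper's route buys: the inhomogeneous reconstruction operators $\Psi_S(\lambda)(f,v)$ with their explicit layer-by-layer recursion, which are reused later in the type A quasi-B argument (Theorem~\ref{main:AB}); with your approach that inhomogeneous machinery would have to be re-derived there. Finally, your matching of the operator-defined $u_{\Sc}=(\lambda_0 I-A_{S^cS^c})^{-1}A_{S^cS}v$ with the recursion~\eqref{recurs:I} is stated loosely but is sound: the block equation says $(\lambda_0-d(z))u(z)=\int_V u\,dK_z$ a.e.\ on $\Sc$, for $z\in S_n\setminus S_{n-1}$ the integral restricts to $S_{n-1}$ by Definition~\ref{infinite:depth}, and uniqueness follows by comparing any solution of~\eqref{recurs:I} layer by layer using Proposition~\ref{prop Sn cover}.
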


\bigskip

\begin{remark}
\label{spec=}
Under the assumptions of Theorem~\ref{main:B}, if $S$ is finite then the equality holds in item (1), i.e., 
$ \spec(\Aop)\setminus \Sigma_d = \spec(\Rop_S) $.
Indeed, in this case, the reduced operator is finite dimensional and, consequently, the equality follows from item (2).
\end{remark}

\bigskip

To prove Theorem~\ref{main:B} we need the following lemma.

\bigskip

\begin{lemma}
\label{L1}
Assume $V$, $K$ and $d \colon V\to \C$  satisfy (A1)-(A2) and $S$ is a structural set of type  B.
Given $\lambda_0\in\C\setminus \Sigma_d$,
$f\in L^1(V)$ and $v\in L^1(S)$, 
defining recursively a function $u \colon V\to \C$ by
\begin{equation}\label{recurs:II}
\left\{\begin{array}{l}
u(x)  = v(x)  \quad \text{ for } \;  x\in S_0=S\\ \\
{\displaystyle u(z)  = -\frac{ f(z)}{\lambda_0-d(z)} + \int_{ S_{n-1}} {\frac{K(z,dy)}{\lambda_0 - d(z)}\,u(y)} } 
\quad \text{ for all }\;
z\in S_n\setminus S_{n-1} 
\end{array}\right.
\end{equation}
then $u\in L^1(V)$. Furthermore, 
$\norm{u}_{L^1(V)} \leq C (\norm{f}_{L^1(V)}  + \norm{v}_{L^1(S)}),$
for some constant $C=C(\lambda_0)>0$.
\end{lemma}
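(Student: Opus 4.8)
The plan is to exploit the layered decomposition $V=\bigcup_{n\ge 0} A_n$, where $A_n:=S_n\setminus S_{n-1}$ (with $S_{-1}:=\emptyset$), which is a partition of $V$ into measurable sets by Proposition~\ref{prop Sn cover} together with the nesting $S_{n-1}\subseteq S_n$. Writing $h:=dK/d\mu$ and $r:=\dist(\lambda_0,\Sigma_d)>0$, I would first record the three structural facts that drive every estimate: for $z\in A_n$ with $n\ge 1$ one has $z\in\Sc$, whence $\abs{\lambda_0-d(z)}\ge r$; since $z\in S_n\setminus S_{n-1}$, Definition~\ref{infinite:depth} forces $\vert K\vert(z,V\setminus S_{n-1})=0$, so $K_z$ is carried by $S_{n-1}$; and Definition~\ref{structural:B}(2) gives $\abs{h(z,y)}\le M(z)$ for a.e.\ $y\in\Sc$. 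The recursion~\eqref{recurs:II} then defines $u$ a.e.\ layer by layer, its measurability following from that of each layer, while finiteness of the defining integrals a.e.\ comes out as a byproduct of the global $L^1$ bound below (via Tonelli).

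Setting $a_n:=\norm{u\,\ind{A_n}}_1$, I would reduce everything to a single scalar recursion. For $n\ge 1$, bounding~\eqref{recurs:II} pointwise and integrating over $A_n$ yields $a_n \le \frac1r\int_{A_n}\abs{f}\,d\mu + \frac1r\int_{A_n}\int_{S_{n-1}}\abs{h(z,y)}\,\abs{u(y)}\,\mu(dy)\,\mu(dz)$. I split the inner integral over $S_{n-1}$ as $\int_S+\int_{S_{n-1}\cap\Sc}$. On $S_{n-1}\cap\Sc=\bigcup_{j=1}^{n-1}A_j$ the bound $\abs{h(z,y)}\le M(z)$ gives the term $\frac{m_n}{r}\sum_{j=1}^{n-1}a_j$ with $m_n:=\int_{A_n}M\,d\mu$, since $\int_{S_{n-1}\cap\Sc}\abs{u}\,d\mu=\sum_{j=1}^{n-1}a_j$. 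On $S$, where $u|_S=v$, it contributes $P_n:=\frac1r\int_{A_n}\int_S\abs{h(z,y)}\,\abs{v(y)}\,\mu(dy)\,\mu(dz)$. Altogether $a_n\le F_n+P_n+\frac{m_n}{r}\sum_{j=1}^{n-1}a_j$ with $F_n:=\frac1r\int_{A_n}\abs{f}\,d\mu$, and $a_0=\norm{v}_{L^1(S)}$.

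Two elementary summations control the inhomogeneous terms. Because the $A_n$ with $n\ge1$ partition $\Sc$, I get $\sum_{n\ge1}F_n\le \frac1r\norm{f}_{L^1(V)}$ and, by Tonelli, $\sum_{n\ge1}P_n=\frac1r\int_S\abs{v(y)}\bigl(\int_\Sc\abs{h(z,y)}\,\mu(dz)\bigr)\mu(dy)\le \frac{\norm{K}}{r}\norm{v}_{L^1(S)}$, using $\int_\Sc\abs{h(z,y)}\,\mu(dz)\le\norm{h}_{1,\infty}=\norm{K}$. Finally $\sum_{n\ge1}m_n=\int_\Sc M\,d\mu\le\int_\Sc n_S\,M\,d\mu<+\infty$ by Definition~\ref{structural:B}(3). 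A discrete Gronwall argument applied to the partial sums $b_n:=\sum_{j=1}^n a_j$, which satisfy $b_n\le(1+m_n/r)\,b_{n-1}+F_n+P_n$ with $b_0=0$, yields $b_n\le\bigl(\prod_{k\ge1}(1+m_k/r)\bigr)\sum_{j\ge1}(F_j+P_j)$, the product being finite since $\sum_k m_k<\infty$. Adding $a_0$ gives $\norm{u}_{L^1(V)}=\sum_{n\ge0}a_n\le C(\norm{f}_{L^1(V)}+\norm{v}_{L^1(S)})$ for a constant $C$ depending only on $\lambda_0$ (through $r$) and on the fixed datum $\norm{K}$.

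The main obstacle is the $S$-layer contribution $P_n$: it reappears at \emph{every} depth $n$ and cannot be absorbed by the $M$-bound, which only controls $h$ on $\Sc$. The resolution is to sum these contributions across all layers first and invoke $\sum_{n\ge1}\int_{A_n}\abs{h(z,y)}\,\mu(dz)=\int_\Sc\abs{h(z,y)}\,\mu(dz)\le\norm{K}$, thereby trading the per-layer $M$-bound for the global $(1,\infty)$-bound of $K$. A secondary point is ensuring the Gronwall product converges, which is precisely where Definition~\ref{structural:B}(3) enters, together with the routine justification of the a.e.\ well-definedness and measurability of the layerwise recursion.
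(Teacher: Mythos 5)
Your proof is correct, and it takes a genuinely different route from the paper's. The paper first replaces $S_{n-1}$ by $V$ in~\eqref{recurs:II} (legitimate because $\vert K\vert(z,V\setminus S_{n-1})=0$ for $z\in S_n\setminus S_{n-1}$), rewrites the recursion as the fixed-point equation $u=(\bar v-\Dop f)+\Dop\Qop u$, and proves that $I-\Dop\Qop$ is invertible via a Neumann series: the iterate $(\Dop\Qop)^n$ is supported on the taboo sets $D_j=\{z\in\Sc\colon n_S(z)\ge j\}$, which yields $\norm{(\Dop\Qop)^n}\le r^{-(n-1)}\norm{M}_{L^1(D_1)}\cdots\norm{M}_{L^1(D_{n-1})}\norm{M}_{L^1(\Sc)}$, summable by the ratio test thanks to Definition~\ref{structural:B}(3); the lemma then follows from the formula $u=(I-\Dop\Qop)^{-1}(\bar v-\Dop f)$. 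You instead work directly with the depth layers $A_n=S_n\setminus S_{n-1}$, derive the scalar recursion $a_n\le F_n+P_n+(m_n/r)\sum_{j<n}a_j$, and close it with a discrete Gronwall inequality; the key twist, correctly identified, is that the $S$-column contribution $P_n$ (invisible to the $M$-bound, which only controls the kernel on $\Sc$) must be summed over all layers at once using the $(1,\infty)$-norm of $K$. Both arguments rest on the same structural facts, but they buy different things: yours is more elementary (no operator inversion, no Neumann series), comes with the explicit constant $\prod_k(1+m_k/r)\le e^{\norm{M}_{L^1(\Sc)}/r}$, and invokes Definition~\ref{structural:B}(3) only through the consequence $\int_{\Sc}M\,d\mu<+\infty$; the paper's operator formulation, by contrast, produces the identity $u=(I-\Dop\Qop)^{-1}(\bar v-\Dop f)$, which is exactly what gets reused later --- in Remark~\ref{eigen}, in the definition of $\Psi_S$ as a bounded operator, and in the convergence argument of Proposition~\ref{conv:reconstruction_operators} --- so within the paper's architecture the Neumann-series detour is not wasted effort. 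One point you should spell out rather than leave as a byproduct: the layerwise well-definedness requires, inductively, that $\int_{A_n}\int_{S_{n-1}}\abs{h(z,y)}\,\abs{u(y)}\,\mu(dy)\,\mu(dz)\le\norm{K}\,\norm{u\,\ind{S_{n-1}}}_1<+\infty$ by Tonelli, so that the defining integral in~\eqref{recurs:II} is finite for $\mu$-a.e.\ $z\in A_n$ and $u$ is measurable on $A_n$; with that inserted before the splitting step, the argument is complete.
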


Given $B\in\FF$, let us denote by $\ind{B}$ the indicator function of $B$. 

Given a function $v\colon S\to\C$ we denote by $\bar v$ the extension $\bar v \colon V\to \C$,
$$ \bar v (x)=\left\{\begin{array}{lll}
v(x) & \text{ if } & x\in S \\
0 & \text{ if } & x\notin S \\
\end{array} \right. . $$

Given $\lambda\in \C\setminus \Sigma_d$, we introduce the operator
$\Dop(\lambda) \colon L^1(V)\to L^1(V)$ defined by 
\begin{equation}
\label{Dop}
(\Dop(\lambda) h)(z):= \ind{\Sc}(z)\,\frac{h(z)}{\lambda-d(z)},
\end{equation}
which  clearly is a bounded operator.

To simplify notations we will write $\Qop$ instead of  $\Qop_K$,
and   $\Dop$ instead of $\Dop(\lambda)$ whenever $\lambda$ is fixed.

\begin{proof}[Proof of Lemma~\ref{L1}]
Take  $v\in L^1(S)$, $f\in L^1(V)$, $\lambda_0\in\C\setminus\Sigma_d$ and assume that $\lambda_0\in\Omega_r$ where the set $\Omega_r$ was introduced
in the proof of Lemma~\ref{conv}.

Notice that  by Definition~\ref{infinite:depth}, of the sets $S_n$, we can replace $S_{n-1}$ by $V$ in~\eqref{recurs:II}.

Thus equation~\eqref{recurs:II} implies that  on $V$ one has
\begin{equation}
\label{u=-Df+QDu}
 u = (\bar v-\Dop f)  +  \Dop \Qop u . 
\end{equation}
We claim that  $I-\Dop\,\Qop$ is an invertible operator.

A simple calculation shows that for $z\in\Sc$
\begin{equation}
\label{u decomp}
 ((\Dop \Qop)^n h)(z)   =  \int_{V} \int_{\Sc}\cdots \int_{\Sc}
\frac{K(z,dz_1)\,K(z_1,dz_2)\,\cdots\, K(z_{n-1},d z_n)\, h(z_n)}{(\lambda_0-d(z))\prod_{p=1}^ {n-1} \left(\lambda_0-d(z_p) \right)  }\\
\end{equation}
with $((\Dop \Qop)^n h)(z)=0$ whenever $z\in S$.

As before let 
$\D_n=\{\, z\in \Sc\,:\, n_S(z)\geq n \,\}$, where $n_S(z)$ is the function introduced in Definition~\ref{structural:B}(3),
and set $D_0=S$.

Notice that for $z\in D_m$ when integrating  a function  over
$(z,z_1,\ldots, z_n)\in V\times (\Sc)^n$ against the kernel
$K(z,dz_1)\,K(z_1,dz_2)\,\cdots\, K(z_{n-1},d z_n)$  its integral vanishes outside the domain
$D_m\times D_{m-1}\times \ldots\times D_{m-n}$.

Hence, for $n_S(z)<n$ we have $((\Dop \Qop)^n h)(z)=0$.
Similarly, if $n_S(z)=n+j$ then

\begin{align*}
((\Dop \Qop)^n h)(z) & =  \int_{D_j} \int_{\D_{j+1}}\cdots \int_{\D_{n+j-1}}
\frac{K(z,dz_1)\,K(z_1,dz_2)\,\cdots\, K(z_{n-1},d z_n)\, h(z_n)}{(\lambda_0-d(z))\prod_{p=1}^ {n-1} \left(\lambda_0-d(z_p)  \right)} .
\end{align*}

Arguing as in the proof of Lemma~\ref{conv:bounded}
we obtain
$$ \norm{(\Dop \,\Qop)^n h}_{L^1(V)} \leq
\frac{1}{r^{n-1}}\, \norm{M}_{L^1(D_1)}\, \cdots \,  \norm{M}_{L^1(D_{n-1})}\,\norm{M}_{L^1(\Sc)}\,\norm{h}_{L^1(V)} . $$
From the convergence of the series
$\sum_{n=1}^\infty\frac{1}{r^{n-1}}\, \norm{M}_{L^1(D_1)}\, \cdots \,  \norm{M}_{L^1(D_{n-1})}$
 (see the proof of  Lemma~\ref{conv:bounded}) the claim follows.

Therefore, from~\eqref{u decomp} we get
$u=(I-\Dop\,\Qop)^{-1}(\bar v-\Dop f)$, which implies that

$$ \norm{u}_{1 }
\leq \max\{1,\norm{ \Dop}\}\,\left(\sum_{n=0}^\infty \norm{(\Dop\,\Qop)^n} \right)\,(\norm{\bar v}_{1} + \norm{f}_{1} ) .$$
\end{proof}

\begin{lemma}
\label{reconst:eq}
Let $V$, $K$ and $d \colon V\to \C$  satisfy (A1)-(A2) and $S$ be a structural set of type  B.
Then given $\lambda_0\in\C\setminus\Sigma_d$,
$f\in L^1(V)$ and $v\in L^1(S)$ 
the following two statements are equivalent:
\begin{enumerate}
\item $\displaystyle u = (\bar v-\Dop(\lambda_0) \,f)  +  \Dop(\lambda_0)\, \Qop \,u $,
\item $u=v$ on $S$ \, and \, $\displaystyle (\Aop-\lambda_0\,I)\, u = f$ on $\Sc$.
\end{enumerate}
\end{lemma}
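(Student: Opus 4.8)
The plan is to prove the equivalence by a direct pointwise ($\mu$-a.e.) computation, splitting $V$ into the two complementary pieces $S$ and $\Sc$ and checking that the single operator identity in (1) reduces, on each piece, to the corresponding half of (2). The mechanism that makes the two pieces decouple is the indicator $\ind{\Sc}$ built into the operator $\Dop(\lambda_0)$ in~\eqref{Dop}.

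First I would restrict (1) to a point $x\in S$. There $(\Dop(\lambda_0) h)(x)=\ind{\Sc}(x)\,\frac{h(x)}{\lambda_0-d(x)}=0$ for every $h$, so both $\Dop(\lambda_0) f$ and $\Dop(\lambda_0)\Qop u$ vanish at $x$, while $\bar v(x)=v(x)$. Hence (1) on $S$ reads simply $u(x)=v(x)$, which is the first half of (2); conversely, if $u=v$ on $S$ then (1) holds identically on $S$. Next I would restrict (1) to a point $z\in\Sc$. Here $\bar v(z)=0$ and $\ind{\Sc}(z)=1$, so (1) becomes $u(z)=-\frac{f(z)}{\lambda_0-d(z)}+\frac{(\Qop u)(z)}{\lambda_0-d(z)}$. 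Because $\lambda_0\in\C\setminus\Sigma_d$ with $\Sigma_d=\overline{d(\Sc)}$, the scalar $\lambda_0-d(z)$ is nonzero (in fact bounded away from $0$) for every $z\in\Sc$, so multiplying through by it is a reversible operation and yields $(\lambda_0-d(z))\,u(z)=-f(z)+(\Qop u)(z)$. Recalling from~\eqref{operator} that $(\Aop u)(z)=d(z)\,u(z)+(\Qop u)(z)$, a rearrangement turns this into $((\Aop-\lambda_0 I)u)(z)=f(z)$, i.e. $(\Aop-\lambda_0 I)u=f$ on $\Sc$, the second half of (2).

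Since each of these reductions is an equivalence at the level of $\mu$-a.e. points, (1) holds on all of $V$ if and only if both conditions in (2) hold, which is exactly the assertion. I do not expect a genuine obstacle: the argument is essentially bookkeeping once one reads off the definitions of $\Dop(\lambda_0)$, $\Qop$ and $\Aop$. The only point deserving attention is the invertibility of the scalar $\lambda_0-d(z)$ on $\Sc$, which is precisely what the hypothesis $\lambda_0\notin\Sigma_d$ provides; this, together with the standing understanding that $u\in L^1(V)$ (so that $\Qop u$ and all the operators are defined and the identities hold a.e.), is all that is needed.
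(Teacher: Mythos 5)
Your proof is correct and follows essentially the same route as the paper's own argument: a pointwise check splitting $V$ into $S$ (where the indicator $\ind{\Sc}$ in $\Dop(\lambda_0)$ annihilates everything except $\bar v$) and $\Sc$ (where one multiplies by the nonzero scalar $\lambda_0-d(z)$ and rearranges via the definition of $\Aop$). Nothing is missing; your remark on $\lambda_0\notin\Sigma_d$ guaranteeing invertibility of $\lambda_0-d(z)$ on $\Sc$ is precisely the point the paper also relies on.
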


\begin{proof}
For $z\in S$ we have $u(z)=v(z)$ whenever $u$ satisfies either (1) or (2).

For $z \in \Sc$, equation
$$\displaystyle u(z) = (\bar v-\Dop(\lambda_0) \,f)(z)  +  (\Dop(\lambda_0)\, \Qop \,u)(z) $$
is equivalent to
$$\displaystyle u(z) =   -\frac{ f(z)}{\lambda_0-d(z)} + \int_{ V} {\frac{K(z,dy)}{\lambda_0 - d(z)}\,u(y)} $$
which in turn is equivalent to
$$((\Aop-\lambda_0\,I)\, u)(z)= f(z).$$
\end{proof}

\begin{lemma}
\label{reconst:N}
Let $V$, $K$ and $d \colon V\to \C$  satisfy (A1)-(A2) and $S$ be a structural set of type  B.
Then given $\lambda_0\in\C\setminus\Sigma_d$, $f\in L^1(V)$ and
 $u\in L^1(V)$ such that $((\Aop-\lambda_0 I)\,u)(z)=f(z)$ for all $z\in \Sc$
the following two statements are equivalent:
\begin{enumerate}
\item $(\Rop(\lambda_0)-\lambda_0\, I) u_S =f_S$,
\item  $(\Aop-\lambda_0 I)\,u =f$ on $V$.
\end{enumerate}
\end{lemma}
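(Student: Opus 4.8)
The plan is to turn statement (2) into a single condition on $S$ and then to eliminate the off-$S$ values of $u$ using the reconstruction identity already available from Lemma~\ref{reconst:eq}. Since the hypothesis guarantees $(\Aop-\lambda_0 I)u=f$ on $\Sc$, statement (2) is equivalent to the one extra demand that $((\Aop-\lambda_0 I)u)(x)=f(x)$ hold for $x\in S$ as well. Thus, writing $g:=(\Aop-\lambda_0 I)u\in L^1(V)$, statement (2) amounts to $g=f$ on $S$, and everything reduces to producing a formula for $g$ on $S$ that involves only $u_S$ and $f$.

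First I would apply Lemma~\ref{reconst:eq} with $v:=u_S$: the hypothesis is then equivalent to the fixed-point relation $u=\bigl(u_S-\Dop(\lambda_0)f\bigr)+\Dop(\lambda_0)\Qop\,u$, i.e.
\begin{equation*}
\bigl(I-\Dop(\lambda_0)\Qop\bigr)\,u=u_S-\Dop(\lambda_0)f .
\end{equation*}
By the estimates in the proof of Lemma~\ref{L1} the Neumann series $\sum_{m\ge 0}\bigl(\Dop(\lambda_0)\Qop\bigr)^m$ converges in $\Lops(L^1(V))$ — this is precisely where the type-B hypotheses enter, through the finite-depth decomposition $V=\cup_{n}S_n$ of Proposition~\ref{prop Sn cover} and the kernel bounds of Lemma~\ref{conv:bounded} — so $I-\Dop(\lambda_0)\Qop$ is invertible and $u=\bigl(I-\Dop(\lambda_0)\Qop\bigr)^{-1}\bigl(u_S-\Dop(\lambda_0)f\bigr)$. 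Because $\Dop(\lambda_0)$ kills every function supported on $S$, all powers $\bigl(\Dop(\lambda_0)\Qop\bigr)^m$ with $m\ge 1$ vanish on $S$; hence $u=u_S$ on $S$, and it only remains to evaluate $\Qop u$ on $S$.

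The core step is the identification of the reduced kernel. Expanding $\Qop u$ through the Neumann series and evaluating at $x\in S$, I would check that $\bigl(\Qop\,(\Dop(\lambda_0)\Qop)^{m}\,u_S\bigr)(x)$ is exactly the taboo integral $\int_S u_S(y)\,K_{S,\lambda_0}^{(m+1)}(x,dy)$ entering~\eqref{RS series}; summing over $m\ge 0$ yields $\bigl(\Qop(I-\Dop(\lambda_0)\Qop)^{-1}u_S\bigr)(x)=\int_S u_S(y)\,R_{S,\lambda_0}(x,dy)$. By~\eqref{infinite:reduced:operator} this gives, for $x\in S$,
\begin{equation*}
(d(x)-\lambda_0)\,u_S(x)+\bigl(\Qop(I-\Dop(\lambda_0)\Qop)^{-1}u_S\bigr)(x)=\bigl((\Rop_S(\lambda_0)-\lambda_0 I)\,u_S\bigr)(x).
\end{equation*}
Tracking the $-\Dop(\lambda_0)f$ part of $u$ through the same expansion and invoking the hypothesis (which fixes $g=f$ on $\Sc$) to account for the source, the equation $g=f$ on $S$ then reads exactly $(\Rop_S(\lambda_0)-\lambda_0 I)u_S=f_S$. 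Since every manipulation is an equivalence, this establishes (1)$\Leftrightarrow$(2) at once.

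The hard part will be the rigorous justification of the two interlocking limit operations in the core step: the termwise interchange of summation and integration needed to sum the Neumann series in the $L^1$-norm, and the exact matching of the operator $\Qop(\Dop(\lambda_0)\Qop)^{m}$ restricted to $S$ with the $(m+1)$-st taboo kernel $K_{S,\lambda_0}^{(m+1)}$ that defines $R_{S,\lambda_0}$ in~\eqref{RS series}. Both are controlled by the type-B structure: the absolute and uniform convergence supplied by Lemma~\ref{conv:bounded}, together with the stratification by depth of Definition~\ref{infinite:depth}, which ensures that the $K$-mass carried off $S$ is returned to $S$ after finitely many steps. Once this identification is secured, both implications reduce to restricting one and the same reconstructed identity to $S$, and no further estimate is needed.
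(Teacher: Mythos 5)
Your reduction of (2) to a single condition on $S$, the use of Lemma~\ref{reconst:eq} with $v=u_S$, the invertibility of $I-\Dop(\lambda_0)\Qop$ via the Neumann series estimates of Lemma~\ref{L1}, and the identification of $\bigl(\Qop(\Dop(\lambda_0)\Qop)^{m}\,\bar u_S\bigr)\big|_S$ with integration against $K_{S,\lambda_0}^{(m+1)}$ are all correct, and amount to an operator-language version of the paper's pointwise argument (the paper instead iterates~\eqref{infinite:iteration} and kills the remainder in~\eqref{structural:conv} using the finite depth $n\geq n_S(x)$). The genuine gap is the sentence ``tracking the $-\Dop(\lambda_0)f$ part \dots the equation $g=f$ on $S$ then reads exactly $(\Rop_S(\lambda_0)-\lambda_0 I)u_S=f_S$'': this is asserted, not computed, and the computation does not deliver it. From $u=(I-\Dop(\lambda_0)\Qop)^{-1}\bigl(\bar u_S-\Dop(\lambda_0)f\bigr)$ one gets, for $x\in S$,
\begin{equation*}
(\Qop u)(x)=\int_S u_S(y)\,R_{S,\lambda_0}(x,dy)-E_f(x),
\qquad
E_f:=\sum_{m\geq 0}\Qop\bigl(\Dop(\lambda_0)\Qop\bigr)^{m}\,\Dop(\lambda_0)f,
\end{equation*}
so ``$g=f$ on $S$'' is equivalent to $(\Rop_S(\lambda_0)-\lambda_0 I)\,u_S=f_S+E_f|_S$, and the correction $E_f|_S$, which depends only on $f|_{\Sc}$, does not vanish in general. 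Your chain of equivalences therefore breaks exactly at the crux.

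This is not a matter of justifying an interchange of limits more carefully; the discrepancy is real. Take $V=\{1,2\}$, $S=\{1\}$, $d\equiv 0$, $w(1,2)=w(2,1)=1$ (a type B structural set, with $\Rop_S(\lambda)v=v/\lambda$). The hypothesis reads $u(1)-\lambda_0 u(2)=f(2)$; then (2) on $S$ becomes $u(1)/\lambda_0-\lambda_0 u(1)=f(1)+f(2)/\lambda_0$, whereas (1) reads $u(1)/\lambda_0-\lambda_0 u(1)=f(1)$: these differ unless $f(2)=0$, and indeed $E_f|_S=f(2)/\lambda_0$ here. So the asserted equivalence holds only when $f$ vanishes on $\Sc$ --- which is precisely the hypothesis under which the paper proves its key identity~\eqref{eq:N} (stated there for $((\Aop-\lambda_0 I)u)(z)=0$ on $\Sc$), and the only case ($f=0$) needed for items (2)--(4) of Theorem~\ref{main:B}; the paper's own proof applies the claim beyond that case without addressing the extra term. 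Your Neumann-series route in fact makes the missing term visible; to close your argument you must either assume $f=0$ on $\Sc$ (then $E_f=0$ and your proof is complete), or carry $E_f|_S$ into statement (1). A minor slip besides: the powers $(\Dop(\lambda_0)\Qop)^m$, $m\geq 1$, vanish on $S$ because the range of $\Dop(\lambda_0)$ is supported in $\Sc$ through the factor $\ind{\Sc}$ in~\eqref{Dop}, not because $\Dop(\lambda_0)$ annihilates functions supported on $S$ (which is also true, but yields only $u=u_S$ on $S$).
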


\begin{proof}
We claim that if for all $z\in\Sc$,
$((\Aop-\lambda_0 I)\,u)(z) =0$, and $x\in S$ then 
\begin{equation}
\label{eq:N}
\int_{\Sc} u_{\Sc}(z)\,K(x,dz) = \sum_{p=2}^\infty
\int_{S} u_S(y)\, K_{S,\lambda_0}^{(p)}(x,dy) .
\end{equation}
Let us prove this claim.
Since $\Aop\, u=\lambda_0\, u$ on  $\Sc$,
we have for all $z\in\Sc$
$$
d(z) \, u_{\Sc}(z)  + \int_V u(w)\, K(z,dw) = \lambda_0\,u_{\Sc}(z)
$$
which is equivalent to
$$
d(z) \, u_{\Sc}(z)  + \int_S  u_S(y)\,K(z,dy) \, + \, \int_{\Sc} u_{\Sc}(z')\,K(z,dz') = \lambda_0\,u_{\Sc}(z).
$$
This in turn is equivalent to
\begin{equation}\label{infinite:iteration}
u_{\Sc}(z)=\frac{1}{\lambda_0 - d(z) } \int_S u_S(y)\,K(z,dy) \, + \, \frac{1}{\lambda_0 - d(z) } \int_{\Sc} u_{\Sc}(z')\, K(z,dz').
\end{equation}
Substituting ~$u_{\Sc}(z')$ by~(\ref{infinite:iteration}) in this relation we get
\begin{align*}
 u_{\Sc}(z) &= \frac{1}{\lambda_0 - d(z) } \int_S u_S(y)\,K(z,dy) +  \int_S  \int_{\Sc}
  u_S(y)\, \frac{K(z,dz')\,K(z',dy)}{(\lambda_0 - d(z))\,(\lambda_0 - d(z') )}  
 \\
 & \quad +    \int_{\Sc} \int_{\Sc}    u_{\Sc}(z'')\,\frac{K(z,dz') \, K(z',dz'')}{(\lambda_0 - d(z) )\,(\lambda_0 - d(z')) }  .
\end{align*}
Given $x\in S$, integrating in $z\in\Sc$ w.r.t. $K(x,dz)$ we obtain
\begin{align*}
\int_{\Sc}  u_{\Sc}(z)\, K(x,dz) &=   \int_S u_S(y)\,K_{S,\lambda_0}^{(2)}(x,dy) +  \int_S u_S(y)\,K_{S,\lambda_0}^{(3)}(x,dy) 
 \\
 & \quad +    \int_{\Sc} \int_{\Sc} \int_{\Sc}    u_{\Sc}(z'')\,\frac{K(x,dz)\,K(z,dz') \, K(z',dz'')}{(\lambda_0 - d(z) )\,(\lambda_0 - d(z')) }  .
\end{align*}

\medskip

\noindent
Proceeding inductively, we obtain for all $n \geq 1$ and 
$x\in S$,
\begin{align}
\int_{\Sc} u_{\Sc}(z)\, K(x,dz)  = &
\sum_{p=2}^n \, \int_S  \, u_S(y) \,  K_{S,\lambda_0}^{(p)}(x,dy)   \nonumber \\ 
 & + \,  \int_{\Sc}\cdots \int_{\Sc} u_{\Sc}(z_n) \,
\frac{K(x,dz_1)\,K(z_1,dz_2)\,\cdots\, K(z_{n-1},dz_n)}{\prod_{p=1}^ {n-1} \left(\lambda_0-d(z_p) \right)}. \label{structural:conv}
\end{align}
For $n\geq n_S(x)$ the remainder in~\eqref{structural:conv} vanishes which proves the claim.

\bigskip

Let us prove (1)$\Rightarrow$(2).
Assuming $\Rop_S(\lambda_0)\, u_S=\lambda_0\, u_S + f_S$
we have
\begin{align*}
f(x) + (\lambda_0 - d(x))\,u_S(x)   &= 
\sum_{p=1}^{\infty} \, \int_S  \, u_S(y) \,  K_{S,\lambda_0}^{(p)}(x,dy)\\
&= \int_S  \, u_S(y) \,  K(x,dy) + \sum_{p=2}^{\infty} \, \int_S  \, u_S(y) \,  K_{S,\lambda_0}^{(p)}(x,dy)\\
&= \int_S  \, u_S(y) \,  K(x,dy) + \int_{\Sc}  \, u_{\Sc}(y) \,  K(x,dy) = (\Qop \,u)(x),
\end{align*} 
where in the last step we use claim~\eqref{eq:N}.
This proves that $(\Aop-\lambda_0 I)\,u = f$ on $S$.
Since we are also assuming that $(\Aop-\lambda_0 I)\,u = f$ on $\Sc$, item (2) follows.

\bigskip

Let us now prove (2)$\Rightarrow$(1).
Assuming $(\Aop \,u-\lambda_0 I) \, u=f$ 
and using the claim~\eqref{eq:N},
we have for all $x\in S$,
\begin{align*}
f(x) + (\lambda_0-d(x))\, u(x) &= 
\int_S u_S(y)\, K(x,dy) + 
\int_{\Sc} u_{\Sc} (y)\, K(x,dy)\\
&= 
\int_S u_S(y)\, K(x,dy) + 
\sum_{p=2}^\infty
\int_{S} u_S(y)\, K_{S,\lambda_0}^{(p)}(x,dy) \\
&= \int_S u_S(y)\, R_{S,\lambda_0}(x,dy). 
\end{align*}
This proves that $(\Rop(\lambda_0)-\lambda_0 I) u_S =f_S$.
\end{proof}

\bigskip

We are now ready for the proof of Theorem~\ref{main:B}.

\begin{proof}[Proof of theorem~\ref{main:B}]

Item (3) and the direct implication in (2) are
consequences of Lemma~\ref{reconst:N} with $f=0$.

Next we prove (4) and the converse implication in (2).

Let $\lambda_0\in \C\setminus\Sigma_d$ be an eigenvalue of
$\Rop_S(\lambda_0)$ and $v\in L^1(S)$ be an associated eigenfunction,
$\Rop_S(\lambda_0)\, v=\lambda_0\,v$.
Applying Lemma~\ref{L1} with $f=0$
there exists a function $u\in L^1(V)$ defined recursively by~\eqref{recurs:II}. 
Observe that~\eqref{recurs:I} is the same as~\eqref{recurs:II}
when $f=0$.
As noticed in the proof of Lemma~\ref{L1},~\eqref{recurs:II} is equivalent to ~\eqref{u=-Df+QDu}, which in this case takes the form
$ u = \bar v + \Dop\,\Qop u $.
In  turn, by Lemma~\ref{reconst:eq},
this equation implies that   $(\Aop -\lambda_0 I )\, u= 0$ on $\Sc$.
Now, since $\Rop_S(\lambda_0)\, v=\lambda_0\,v$ and $(\Aop -\lambda_0 I )\, u= 0$ on $\Sc$, by Lemma~\ref{reconst:N}, with $f=0$, we obtain that $(\Aop -\lambda_0 I )\, u= 0$ on $V$.

Finally we prove (1).

Given $\lambda_0\notin \Sigma_d$, assume  that $\lambda_0\notin \spec(\mathscr{R}_S)$.
Then, given $f\in L^1(V)$, there exists $v\in L^1(S)$ such that 
$(\mathscr{R}_S(\lambda_0)-\lambda_0\,I)\,v=f_S$,
where $f_S\in L^1(S)$ stands for the restriction of $f$ to $S$.

Consider by Lemma~\ref{L1} the  function $u\in L^1(V)$
defined recursively by~\eqref{recurs:II}.
As noticed above,~\eqref{recurs:II} is equivalent to ~\eqref{u=-Df+QDu},   which once more, by Lemma~\ref{reconst:eq},
is equivalent to $(\Aop -\lambda_0 I )\, u= f$ on $\Sc$.
Now, since $(\mathscr{R}_S(\lambda_0)-\lambda_0\,I)\,v=f_S$ and $(\Aop -\lambda_0 I )\, u= f$ on $\Sc$, by Lemma~\ref{reconst:N} we obtain that $(\Aop -\lambda_0 I )\, u= f$ on $V$.

On the other hand, by (2) this operator must be injective.
Therefore $\lambda_0\notin \spec(\Aop)\setminus\Sigma_d$.
\end{proof}

\bigskip

Given a structural set $S$ of type B 
consider the map 
$$\Psi_S=\Psi_{S,d,K} \colon \C\setminus \Sigma_d\to \Lops\left(
L^1(V)\times L^1(S), L^1(V) \right)$$
 that to each $\lambda\in\C\setminus \Sigma_d$, $f\in L^1(V)$ and $v\in L^1(S)$ associates the unique function $u=\Psi_S(\lambda)(f,v)$
defined recursively by~(\ref{recurs:II}).
Lemma~\ref{L1} proves that this function is well defined.

\begin{remark}\label{eigen}
Given $\lambda_0\in\C\setminus \Sigma_d$,
 $f\in L^1(V)$ and $v\in L^1(S)$ such that
 $(\Rop(\lambda_0)-\lambda_0\,I)\,v=f_S$ 
then the function $ u=\Psi_S(\lambda_0)(f,v)$ satisfies $\displaystyle (\Aop-\lambda_0\,I)\, u= f $.
\end{remark}

\begin{proof}
By definition  $ u=\Psi_S(\lambda_0)(f,v)$ satisfies~\eqref{recurs:II}, which as noticed in the proof of Lemma~\ref{L1} is equivalent to~\eqref{u=-Df+QDu}. Hence, by Lemma~\ref{reconst:eq} one has
$(\Aop-\lambda_0 I)\,u=f$ on $\Sc$. Therefore, by Lemma~\ref{reconst:N} we have that $(\Aop-\lambda_0 I)\,u=f$ on $V$.
\end{proof}

\begin{remark}
Under the assumptions of Theorem~\ref{main:B}, the function 
$\Psi_S \colon \C\setminus \Sigma_d \to \Lops(L^1(V) \times L^1(S),L^1(V))$ is analytic.
\end{remark}

\bigskip

We introduce now the family  of {\em reconstruction operators}  
 $\Phi_S=\Phi_{S,d,K} \colon \C\setminus \Sigma_d \to \Lops(L^1(S),L^1(V))$ defined by
 $$\Phi_S(\lambda)(v) := \Psi_S(\lambda) (0,v) .$$

\begin{remark}
If $\lambda\in \spec(\Rop_S)$ is an eigenvalue of $\Rop_S(\lambda)$ with eigenfunction $u\in L^1(S)$ then 
$v=\Phi_S(\lambda)(u)$ is an eigenfunction of $\Aop$ 
associated  
with the same eigenvalue, i.e.,
$\Aop v= \lambda\, v$. This explains the `reconstruction' terminology.
\end{remark}

\bigskip

\subsection{ Type A isospectral reduction }
In this subsection we prove a isospectral reduction theorem for structural sets of type A.

\begin{theorem}\label{main:A}
Assume $V$, $K$ and $d \colon V\to \C$  satisfy (A1)-(A2) and $S$ is a structural set of type A. Let  $\lambda_0 \in \C \setminus \Sigma_d$  be an eigenvalue of $\Aop$ and 
$u \in L^1(V)$ be an associated eigenfunction, 
$\Aop u = \lambda_0 u$. 
Then $\lambda_0$ is also an eigenvalue of $\Rop_S(\lambda_0)$ and
$\Rop_S(\lambda_0)\, u_S=\lambda_0\, u_S$, i.e.,
$u_S$ is the corresponding eigenfunction for $\Rop_S(\lambda_0)$.
\end{theorem}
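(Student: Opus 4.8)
The plan is to mimic the structure of the Type B proof but replace the depth‑based finite recursion with the absolutely convergent kernel series guaranteed by the Type A hypothesis. The goal is to show that if $\Aop u = \lambda_0 u$ then, restricting to $S$, the reduced operator satisfies $\Rop_S(\lambda_0)\,u_S = \lambda_0\,u_S$. The essential identity I want to establish is the Type A analogue of the claim~\eqref{eq:N} from Lemma~\ref{reconst:N}, namely
$$
\int_{\Sc} u_{\Sc}(z)\,K(x,dz) = \sum_{p=2}^\infty \int_S u_S(y)\,K_{S,\lambda_0}^{(p)}(x,dy)
\qquad \text{for } x\in S.
$$
Once this is in hand, the computation at the end of Lemma~\ref{reconst:N} showing $(\Aop-\lambda_0 I)u = f$ on $S$ (here with $f=0$) transfers verbatim, yielding $\Rop_S(\lambda_0)\,u_S = \lambda_0\,u_S$.

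\medskip
\noindent
\textbf{Deriving the key identity.} First I would record that $\Aop u = \lambda_0 u$ on $\Sc$ is equivalent to the pointwise relation~\eqref{infinite:iteration} for $u_{\Sc}(z)$, exactly as in Lemma~\ref{reconst:N}; this step uses only that $\lambda_0 \notin \Sigma_d$ (so $\lambda_0 - d(z) \neq 0$ on $\Sc$) and does not depend on the type of structural set. Iterating this relation $n$ times and integrating against $K(x,dz)$ for $x\in S$ gives the finite‑$n$ expansion~\eqref{structural:conv} with $f=0$:
$$
\int_{\Sc} u_{\Sc}(z)\,K(x,dz) = \sum_{p=2}^n \int_S u_S(y)\,K_{S,\lambda_0}^{(p)}(x,dy) + R_n(x),
$$
where $R_n(x)$ is the remainder integral against the length‑$n$ kernel product carrying the factor $u_{\Sc}(z_n)$ over $(\Sc)^n$. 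In the Type B case the remainder vanishes identically once $n \geq n_S(x)$; for Type A it will not vanish, so instead I must show $R_n \to 0$ in $L^1(S)$.

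\medskip
\noindent
\textbf{Controlling the remainder — the main obstacle.} This is where the Type A hypothesis does the work. The remainder is
$$
R_n(x) = \int_{(\Sc)^n} u_{\Sc}(z_n)\,\frac{K(x,dz_1)\cdots K(z_{n-1},dz_n)}{\prod_{p=1}^{n-1}(\lambda_0 - d(z_p))}.
$$
Fixing $\lambda_0 \in \Omega_r$ and bounding each denominator factor by $r^{-1}$ as in the proof of Lemma~\ref{conv}, the kernel product is dominated by the taboo measure $\tau_{S,n}$, so that $\abs{R_n(x)} \leq r^{-(n-1)} \int_{\Sc} \abs{u(y)}\,\tau_{S,n}(x,dy)$. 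Applying the defining bound of a structural set of type A (Definition~\ref{structural:A}), integrating in $x$ over $S$, and using Fubini together with $M\in L^{1,\infty}$, I expect to obtain
$$
\norm{R_n}_{L^1(S)} \leq \frac{t_n}{r^{n-1}}\,\norm{M}_{1,\infty}\,\norm{u}_{L^1(V)}.
$$
Since $t_n \to 0$ super exponentially, the ratio $t_n/r^{n-1} \to 0$, forcing $R_n \to 0$ in $L^1(S)$ and simultaneously guaranteeing absolute convergence of the series $\sum_{p\geq 2}\int_S u_S\,K_{S,\lambda_0}^{(p)}$ (already ensured by Lemma~\ref{conv}). The genuine subtlety is that $u_{\Sc}$ need not be bounded — it lives only in $L^1$ — so the estimate must be carried out in $L^1(S)$‑norm after integrating in $x$, rather than pointwise; the $(1,\infty)$‑structure of the dominating function $M$ is precisely what makes the Fubini interchange and the resulting norm bound legitimate. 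With the identity established in the $L^1$ limit, the final algebraic step identifying $\int_S u_S\,R_{S,\lambda_0}(x,dy)$ with $(\lambda_0 - d(x))\,u_S(x)$ closes the proof.
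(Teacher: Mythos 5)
Your proposal is correct and follows essentially the same route as the paper: both derive the expansion~\eqref{structural:conv} by iterating~\eqref{infinite:iteration}, bound the remainder in $L^1(S)$-norm by $\frac{t_n}{r^{n-1}}\norm{M}_{1,\infty}\norm{u}_1$ using the type A taboo-measure estimate (exactly the point where integrating in $x$ over $S$ is needed since $u_{\Sc}$ is only $L^1$), and then invoke the (2)$\Rightarrow$(1) computation of Lemma~\ref{reconst:N} with $f=0$ to conclude $\Rop_S(\lambda_0)\,u_S=\lambda_0\,u_S$. No gaps beyond those shared with the paper's own argument.
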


\begin{proof}
The proof follows the steps of the statement (2)$\Rightarrow$(1) in Lemma~\ref{reconst:N} with $f=0$.
For each $r>0$, consider the open set
$\Omega_r:=\{\lambda\in\C\colon 
\dist(\lambda,\Sigma_d)>r\}$, so that
$\C\setminus \Sigma_d=\cup_{r>0}\Omega_r$.
Let $\lambda_0 \in \Omega_r$.
We just need to observe that, because $S$ is a structural set of type A, the reminder integral in equality~(\ref{structural:conv}) converges to $0$ as $n\to +\infty$. 
Actually, the 
equality~\eqref{eq:N}
 holds
regardless of the structural set type. 
Indeed,
the integral
$$ \int_S \sabs{ \int_{\Sc}\cdots \int_{\Sc} u_{\Sc}(z_n) \,
\frac{K(x,dz_1)\,K(z_1,dz_2)\,\cdots\, K(z_{n-1},dz_n)}{\prod_{p=1}^ {n-1} \left(\lambda_0-d(z_p) \right)} }\,\mu(dx) $$
is bounded by 
$\frac{t_n}{r^{n-1}}\,\int_S \int_{\Sc}  \sabs{u_{\Sc}(z_n)}\,M(x,z_n) \,\mu(dz_n)\,\mu(dx)\leq  \frac{t_n}{r^{n-1}}\,\norm{M}_{1,\infty}\,\norm{u}_1$,
 and hence
converges to $0$ as $n\to +\infty$.
\end{proof}

\bigskip

\subsection{ Type A quasi-B isospectral reduction  }
In this subsection we prove a isospectral reduction theorem for structural sets of type A quasi-B.

\bigskip

Let  $(E,\norm{.})$ be a a Banach space and let 
$P \in  \Lops(E)$. Let $B_1$ be the closed unit ball in $E$. The operator $P$ is called {\em weakly compact} if the weak closure of 
$P B_1$ is compact in the weak topology (see~\cite{DS}). The set of weakly compact operators is closed in the uniform operator topology of $\Lops(E)$~(\cite[\S VI. 4., Corollary 4]{DS}).

\bigskip

\begin{lemma}\label{wc:operators}
Let $\Aop \colon E\to E$ be a weakly compact linear operator
on a Banach space $E$,
and $\Aop_n \colon E\to E$ a sequence of linear operators converging to $\Aop$. Let $u_n\in E$ be a unit eigenvector of $\Aop_n$ with $\Aop_n u_n=\lambda_n\, u_n$ and assume $\lambda_0=\lim_{n\to \infty} \lambda_n$ is non zero. Then
\begin{enumerate}
\item $\lambda_0$ is an eigenvalue of $\Aop$.
\item The sequence $\{u_n\}_n$ is relatively compact.
\item Any sublimit $u$ of $\{u_n\}_n$ is an eigenvector of $\Aop$ with $\Aop\,u=\lambda_0\,u$.
\end{enumerate}

\end{lemma}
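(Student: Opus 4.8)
The plan is to exploit the defining property of weak compactness together with the convergence $\Aop_n\to\Aop$ in operator norm. The key identity to manipulate is
\begin{equation*}
\Aop\, u_n - \lambda_n\, u_n = (\Aop-\Aop_n)\, u_n,
\end{equation*}
whose right-hand side has norm at most $\norm{\Aop-\Aop_n}\to 0$ since each $u_n$ is a unit vector. First I would use that $\Aop$ is weakly compact and the $u_n$ lie in the closed unit ball $B_1$: by the Eberlein--\v{S}mulian theorem the weak closure of $\Aop\, B_1$ is weakly sequentially compact, so the sequence $\{\Aop\, u_n\}_n$ admits a weakly convergent subsequence, say $\Aop\, u_{n_k}\rightharpoonup w$ weakly. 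From the displayed identity and $\lambda_n\to\lambda_0\neq 0$ this forces $\lambda_{n_k}\, u_{n_k}=\Aop\, u_{n_k}-(\Aop-\Aop_{n_k})u_{n_k}\rightharpoonup w$, and dividing by $\lambda_{n_k}$ gives $u_{n_k}\rightharpoonup \lambda_0^{-1}\, w=:u$ weakly.

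Next I would upgrade weak convergence of $\{u_{n_k}\}$ to norm convergence, which is exactly where weak compactness does its real work. Since $\Aop$ is weakly compact and $u_{n_k}\rightharpoonup u$, one has $\Aop\, u_{n_k}\rightharpoonup \Aop\, u$ (a bounded operator is weak-to-weak continuous), but moreover the image sequence $\{\Aop\, u_{n_k}\}$ lies in the relatively weakly compact set $\Aop\, B_1$. Passing to a further subsequence I would arrange $\Aop\, u_{n_k}$ to converge; combining $\lambda_{n_k}\, u_{n_k}=\Aop\, u_{n_k}-(\Aop-\Aop_{n_k})u_{n_k}$ with $\lambda_{n_k}\to\lambda_0\neq 0$ then shows $u_{n_k}$ itself converges in norm to $u$, establishing relative compactness of $\{u_n\}_n$ (item 2), after the standard remark that every subsequence has a norm-convergent sub-subsequence.

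Finally, for items 1 and 3, I would pass to the limit along the norm-convergent subsequence $u_{n_k}\to u$. On the one hand $\lambda_{n_k}\, u_{n_k}\to\lambda_0\, u$ in norm; on the other, $\Aop\, u_{n_k}\to\Aop\, u$ by continuity of $\Aop$. The identity $\Aop\, u_{n_k}-\lambda_{n_k}\, u_{n_k}=(\Aop-\Aop_{n_k})u_{n_k}$ has right-hand side tending to $0$ in norm, so in the limit $\Aop\, u=\lambda_0\, u$. Since $\norm{u_{n_k}}=1$ and the norm is continuous, $\norm{u}=1$, so $u\neq 0$ and $\lambda_0$ is genuinely an eigenvalue with eigenvector $u$; any sublimit of $\{u_n\}$ arises this way and satisfies the same eigenrelation.

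The main obstacle is the promotion from weak to norm convergence: weak compactness only supplies weakly convergent subsequences, and a priori the weak limit need not be approached in norm. The resolution is precisely the hypothesis $\lambda_0\neq 0$, which lets me divide through and recover the vector $u_{n_k}$ itself from the (norm-convergent, along a subsequence) image $\Aop\, u_{n_k}$; without this hypothesis the argument breaks down, as a weakly compact operator can easily have $0$ in its spectrum with no corresponding eigenvector limit. Care must also be taken to extract a single subsequence along which both $\Aop\, u_{n_k}$ converges and $\lambda_{n_k}\to\lambda_0$, which is routine by a diagonal or nested-subsequence argument.
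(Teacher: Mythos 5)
Your argument has a genuine gap at its decisive step, the ``upgrade'' from weak to norm convergence. Relative weak compactness of $\Aop B_1$ gives you, via Eberlein--\v{S}mulian, only \emph{weakly} convergent subsequences of $\{\Aop u_{n_k}\}_k$; it does not give a norm-convergent subsequence, and the hypothesis $\lambda_0\neq 0$ does nothing to repair this: dividing a weakly convergent sequence by scalars $\lambda_{n_k}\to\lambda_0$ still yields only weak convergence. What you would actually need is that $\Aop$ maps weakly convergent sequences to norm-convergent ones (complete continuity), and weak compactness does not imply this in a general Banach space. Indeed, the identity on $\ell^2$ is weakly compact (the space is reflexive); taking $\Aop_n=I$, $\lambda_n=1$, and $u_n=e_n$ an orthonormal basis shows that item (2) --- and hence your route to items (1) and (3) --- fails outright, since $\{e_n\}_n$ has no norm-convergent subsequence. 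Because your proof uses only facts valid in every Banach space, while the conclusion is false at that level of generality, no patch within your framework can succeed.

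The lemma is really a statement about the setting in which the paper applies it, namely $E=L^1(S)$. There the Dunford--Pettis property holds, so a weakly compact operator is completely continuous and has compact square; Riesz theory then makes every nonzero spectral point of $\Aop$ an isolated eigenvalue of finite multiplicity. This is the extra input the paper's own proof uses: $\lambda_0\in\spec(\Aop)$ by upper semicontinuity of the spectrum under norm convergence of operators; $\lambda_0$ is isolated with finite-dimensional generalized eigenspace $F$; and the Riesz projections $P=\frac{1}{2\pi i}\int_\Gamma R(\Aop,z)\,dz$ and $P_n=\frac{1}{2\pi i}\int_\Gamma R(\Aop_n,z)\,dz$, over a contour $\Gamma$ isolating $\lambda_0$, satisfy $P_n\to P$. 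Since $u_n=P_n u_n$, the sequence $u_n$ is within $\norm{P_n-P}$ of the sequence $Pu_n$ lying in the finite-dimensional space $F$, which yields relative compactness, and items (1) and (3) follow by passing to the limit. If you want to salvage your approach you must inject this spectral-projection (or complete-continuity) input; weak compactness exploited only through Eberlein--\v{S}mulian cannot deliver norm compactness of the eigenvectors.
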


\begin{proof}
By spectrum continuity, $\lambda_0\in \spec(\Aop)$.
Since $\lambda_0\neq 0$ and $\Aop$ is weakly compact,
$\lambda_0$ is an isolated eigenvalue with finite muliplicity. Hence $\spec(\Aop)=\{\lambda_0\}\cup \Sigma$
for some compact set $\Sigma\subset\C$ with $\lambda_0\notin\Sigma$.
Let $E=F\oplus H$ be the corresponding $\Aop$-invariant decomposition where $F$ is the generalized eigenspace associated  
with $\lambda_0$.
Consider a simple closed, positively oriented curve $\Gamma$ which isolates $\lambda_0$ from $\Sigma$ 
and denote by $R(\Aop, z):=(z I-\Aop)^{-1}$ the resolvent of $\Aop$. Then the projection $P \colon E\to E$ onto $F$ parallel to $H$ is given by
$$ P =\frac{1}{2\pi i}\,\int_\Gamma R(\Aop, z)\,dz . $$
Of course for large $n$ the operators $\Aop_n$
admit a similar decomposition of the spectrum
$$ \spec(\Aop_n) = \Lambda_n \cup\Sigma_n, $$
where $\lambda_n\in\Lambda_n$, and $\Lambda_n$, $\Sigma_n$ are closed sets separated by $\Gamma$.
Hence  the operator
$$ P_n  =\frac{1}{2\pi i}\,\int_\Gamma R(\Aop_n, z)\,dz  $$
 is the projection onto an $\Aop_n$-invariant finite dimensional suspace $F_n$ (with same dimension as $F$). By definition it is clear that  $P_n\circ \Aop_n=\Aop_n\circ P_n$,
 which implies that $H_n:={\rm Ker}(P_n)$ is also $\Aop_n$-invariant. It also follows that $P_n$ converges to $P$.
 
 Now, since $\lambda_n\in\Lambda_n$, 
 we have $u_n=P_n u_n\in F_n$.
 The sequence  $\tilde u_n:= P u_n \in F$ 
 is relatively compact because $F$ is finite dimensional. On the other hand, since $P_n\to P$,
 we have that  $\norm{u_n-\tilde u_n}=\norm{P_n u_n- P u_n} \leq \norm{P_n-P}$ converges to $0$.
 Therefore $\{u_n\}_n$ is also relatively compact,
 which proves (2).
 
 Item (3) is clear.
\end{proof}

\bigskip

 It is well known (see e.g.,~(\cite[p. 104]{W})  that integral operators with an uniformly bounded kernel are weakly compact. Therefore, it follows from Proposition~\ref{Aop:bounded} and Lemma~\ref{conv} that:

\bigskip

\begin{proposition}
\label{Rhat:wc}
Assume $V$, $K$ with $d\equiv 0$  satisfy (A1).
Then
\begin{enumerate}
\item The operator $\Aop$ is weakly compact.
\item Given $\lambda \in \C\setminus\{0\}$, the reduced operators defined by~\eqref{infinite:reduced:operator}  are weakly compact.
\end{enumerate}
\end{proposition}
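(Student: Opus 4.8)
The plan is to derive both items directly from the cited results, so the work reduces to verifying their hypotheses for the operators at hand. For item (1), I would invoke the known fact (recorded just above the statement, citing~\cite[p.~104]{W}) that an integral operator whose kernel is uniformly bounded is weakly compact on $L^1$. When $d\equiv 0$ the operator $\Aop=\Aop_{0,K}$ is precisely the Markov operator $\Qop_K$ given by~\eqref{Markov operator}, i.e. pure integration against $K$. By assumption (A1) the kernel $K$ is $(1,\infty)$-bounded, so its density $h=dK/d\mu$ lies in $L^{1,\infty}(V\times V,\mu)$, and Proposition~\ref{Aop:bounded} already guarantees $\Qop_K\in\Lops(L^1(V))$. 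The one point that needs a remark is that weak compactness on $L^1$ does not require a \emph{pointwise} uniform bound on $h$ but rather the uniform integrability condition $\norm{h}_{1,\infty}<\infty$, which is exactly what $(1,\infty)$-boundedness provides; this is the form of the Dunford--Pettis type criterion for weak compactness of integral operators on $L^1$. So item~(1) follows immediately.

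For item~(2), the key observation is that, by Lemma~\ref{conv} (applicable since a structural set of type~A is in force in the ambient hypotheses, or more precisely since the reduced kernel series converges as shown there), the reduced operator $\Rop_S(\lambda)$ with $d\equiv 0$ is again an integral operator on $L^1(S)$, of the form $f\mapsto \int_S f(y)\,R_{S,\lambda}(x,dy)$, whose kernel $R_{S,\lambda}$ is $(1,\infty)$-bounded. Indeed, the proof of Lemma~\ref{conv} shows, for $\lambda\in\Omega_r$, the bound $\abs{K^{(n)}_{S,\lambda}(x,B)}\le \frac{t_n}{r^{n-1}}\int_B M(x,y)\,\mu(dy)$, and summing the super-exponentially decaying series yields $\abs{R_{S,\lambda}(x,B)}\le C_r\int_B M(x,y)\,\mu(dy)$ with $M\in L^{1,\infty}$. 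Hence the density of $R_{S,\lambda}$ is dominated in $(1,\infty)$-norm by $C_r\,M$, so $R_{S,\lambda}$ is $(1,\infty)$-bounded. Applying the same weak-compactness criterion used in part~(1) to this reduced integral operator gives that $\Rop_S(\lambda)$ is weakly compact for every $\lambda\in\C\setminus\{0\}$ (the exclusion of $0$ matching $\Omega_r$ exhausting $\C\setminus\Sigma_d=\C\setminus\{0\}$ when $d\equiv 0$).

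The only genuine subtlety—hence the step I expect to be the main obstacle—is making precise the weak-compactness criterion so that it applies to a kernel that is merely $(1,\infty)$-bounded rather than bounded in sup norm. The cleanest route is to cite the standard characterization: an integral operator $T$ on $L^1$ with kernel density $h$ is weakly compact provided the family $\{h(\cdot,y):y\in V\}$ (or rather the action on the unit ball) is uniformly integrable, and the condition $\sup_y\int_V\abs{h(x,y)}\,\mu(dx)<\infty$ together with absolute continuity of the induced measures delivers exactly this uniform integrability via the Dunford--Pettis theorem. I would therefore state this criterion once, verify it from $(1,\infty)$-boundedness of $h$ (respectively of the density $C_r\,M$-dominated kernel $R_{S,\lambda}$), and then both conclusions drop out. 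The remaining assertions are routine and need no separate argument beyond the two applications of the criterion.
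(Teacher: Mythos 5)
Your overall strategy coincides with the paper's: the paper disposes of this proposition in a single sentence, citing \cite[p.~104]{W} for weak compactness of integral operators with uniformly bounded kernel and combining that with Proposition~\ref{Aop:bounded} and Lemma~\ref{conv} --- exactly the two ingredients you invoke for items (1) and (2) respectively. The problem lies in the step you yourself single out as the main subtlety, because your proposed resolution of it is false. It is not true that $\norm{h}_{1,\infty}=\sup_{y}\int_V\abs{h(x,y)}\,\mu(dx)<\infty$ is ``exactly'' the uniform integrability demanded by the Dunford--Pettis criterion, and $(1,\infty)$-boundedness of a kernel does not imply weak compactness of the associated operator on $L^1$. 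Counterexample: take $V=\N$ with the counting measure and $h(i,j)=1$ if $i=j$, $0$ otherwise (or the shift kernel, $h(i,j)=1$ iff $i=j+1$). Then $\norm{h}_{1,\infty}=1$, yet the associated operator is the identity (respectively the shift) on $\ell^1$, which is not weakly compact: the image of the unit ball contains every basis vector $e_n$, and by Schur's property no subsequence of $(e_n)_n$ converges weakly.

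What weak compactness of a kernel operator on $L^1$ actually requires is relative weak compactness of the family of columns $\{h(\cdot,y)\colon y\in V\}$ in $L^1(\mu)$ --- uniform integrability together with tightness --- and this is strictly stronger than the uniform $L^1$-bound on the columns that $(1,\infty)$-boundedness provides; the stronger property is what a sup-norm bound on the kernel (over a finite measure space) delivers in the result cited from \cite{W}. As written, your argument would ``prove'' weak compactness of operators, such as the shift on $\ell^1$, that are not weakly compact, and the same defect propagates to item (2), since there you only establish that the density of $R_{S,\lambda}$ is dominated by $C_r\,M$ in the $(1,\infty)$-norm. Closing the gap requires verifying uniform integrability and tightness of the columns of $dK/d\mu$ (respectively of the density of $R_{S,\lambda}$), not merely their $(1,\infty)$-bound. (For what it is worth, the paper's own one-line proof is also loose at this very point --- it applies a criterion stated for uniformly bounded kernels to kernels that are only $(1,\infty)$-bounded --- but it does not commit itself to the false equivalence on which your write-up rests.)
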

 
\bigskip

\begin{proposition}\label{conv:reduced_operators}
Assume $V$, $K$ and $d \colon V\to \C$  satisfy (A1)-(A2) and $S$ is a structural set of type  A quasi-B. Consider the sequence of  kernels $K_n$ in Definition~\ref{structural:AB}. Then for every
compact set $\Lambda\subset \C\setminus\Sigma_d$,
$$\lim_{n\to +\infty} \Rop_{S,d,K_n}(\lambda)=\Rop_{S,d,K}(\lambda)\; \text{ in } \; \Lops(L^1(S)),$$
uniformly on $\lambda\in\Lambda$.
\end{proposition}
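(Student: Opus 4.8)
The plan is to reduce the whole statement to a single resolvent estimate together with the elementary bound $\norm{\Aop_{d,K}-\Aop_{d,K_n}}\le\norm{K-K_n}\to 0$ proved above. Write $\Pi\colon L^1(V)\to L^1(S)$ for the restriction, $\iota\colon L^1(S)\to L^1(V)$ for extension by zero (so $\iota v=\bar v$), and abbreviate $\Qop=\Qop_K$, $\Qop_n=\Qop_{K_n}$ and $\Dop=\Dop(\lambda)$. My first step is to record the operator identities
\[
\Rop_{S,d,K}(\lambda)=\Pi\,\Aop_{d,K}\,(I-\Dop\,\Qop)^{-1}\,\iota,
\qquad
\Rop_{S,d,K_n}(\lambda)=\Pi\,\Aop_{d,K_n}\,(I-\Dop\,\Qop_n)^{-1}\,\iota .
\]
For $K$ of type A the inverse $(I-\Dop\Qop)^{-1}=\sum_{k\ge 0}(\Dop\Qop)^k$ exists on $L^1(V)$: arguing exactly as in Lemma~\ref{conv}, the type A hypothesis yields $\norm{(\Dop\Qop)^k}\le t_k\,r^{-k}\,\norm{M}_{1,\infty}$ on $\Omega_r$ for $k\ge 2$, which is summable. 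For $K_n$ of type B the inverse exists by Lemma~\ref{L1}. In either case the identity is obtained by substituting the Neumann expansion into the kernel series~\eqref{RS series}; concretely $u:=(I-\Dop\Qop)^{-1}\iota v$ solves~\eqref{u=-Df+QDu} with $f=0$, and evaluating $\Pi\,\Aop_{d,K}\,u$ on $S$ reproduces $\Rop_{S,d,K}(\lambda)v$ by the identity~\eqref{eq:N}, which holds regardless of the structural set type.

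The crux is a uniform bound on the resolvent $T:=(I-\Dop\Qop)^{-1}$. Fix a compact $\Lambda\subset\C\setminus\Sigma_d$ and pick $r>0$ with $\Lambda\subseteq\Omega_r$. Summing $\norm{(\Dop\Qop)^k}\le t_k\,r^{-k}\,\norm{M}_{1,\infty}$ over $k$ gives a finite constant $C_\Lambda$ with $\sup_{\lambda\in\Lambda}\norm{T}\le C_\Lambda$. I expect this to be the main obstacle: when $\Lambda$ approaches $\Sigma_d$ the radius $r$ is small, the naive geometric estimate only gives $\norm{\Dop\Qop}\le\norm{K}/r$ (which may exceed $1$), and term-by-term perturbation of the two defining series diverges. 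It is precisely the super-exponential decay of the taboo sequence $t_k$ furnished by the type A hypothesis that forces $\sum_k(\Dop\Qop)^k$ to converge, uniformly on $\Lambda$, for every $r>0$; this is what rescues the estimate and explains why the type A property (rather than merely type B of the $K_n$) must be invoked here.

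With $C_\Lambda$ in hand the proof concludes by perturbation. Since $\norm{\Qop-\Qop_n}\le\norm{K-K_n}\to 0$, for $n$ so large that $C_\Lambda\,\norm{\Dop}\,\norm{\Qop-\Qop_n}\le\tfrac12$ on $\Lambda$, the factorization $I-\Dop\Qop_n=(I-\Dop\Qop)\bigl(I-T\,\Dop(\Qop_n-\Qop)\bigr)$ yields $\sup_{\lambda\in\Lambda}\norm{T_n}\le 2C_\Lambda$, where $T_n:=(I-\Dop\Qop_n)^{-1}$. Finally I would write
\[
\Rop_{S,d,K}(\lambda)-\Rop_{S,d,K_n}(\lambda)
=\Pi\,(\Aop_{d,K}-\Aop_{d,K_n})\,T\,\iota
+\Pi\,\Aop_{d,K_n}\,(T-T_n)\,\iota
\]
and insert the resolvent identity $T-T_n=T\,\Dop(\Qop-\Qop_n)\,T_n$. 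Every factor is then controlled uniformly on $\Lambda$ by $\norm{\Aop_{d,K}-\Aop_{d,K_n}}\le\norm{K-K_n}$, $\norm{T}\le C_\Lambda$, $\norm{T_n}\le 2C_\Lambda$, $\norm{\Dop}\le r^{-1}$ and $\norm{\Aop_{d,K_n}}\le\norm{d}_\infty+\norm{K}+1$, giving $\sup_{\lambda\in\Lambda}\norm{\Rop_{S,d,K}(\lambda)-\Rop_{S,d,K_n}(\lambda)}\le C'_\Lambda\,\norm{K-K_n}\to 0$, as claimed.
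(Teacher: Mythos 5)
Your proof is correct (at the same level of rigor as the paper's own arguments) and takes a genuinely different route. The paper works directly at the level of kernel densities: it compares the $p$-th terms of the series~\eqref{RS series} for $K$ and $K_n$, telescoping the difference of $p$-fold products into a sum of terms each containing one factor $h-h_n$, which yields the bound
$\norm{h_{S,K,\lambda}^{(p)}-h_{S,K_n,\lambda}^{(p)}}_{1,\infty}\le \frac{\norm{M}_1^{p-1}}{r^{p-1}}\,\norm{h-h_n}_{1,\infty}\sum_{j=1}^{p-1}\hat\tau_{K,j}\,\hat\tau_{K_n,p-j}$,
and then sums over $p$ by means of a separate lemma asserting that super-exponential decay is stable under the convolution $\sum_j\tau_j\tau_{n-j}$. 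You instead factor the reduced operator through the resolvent, $\Rop_{S,d,K}(\lambda)=\Pi\,\Aop_{d,K}\,(I-\Dop\Qop)^{-1}\iota$ --- an identity that is indeed available from~\eqref{u=-Df+QDu}, Lemma~\ref{reconst:eq}, and the claim~\eqref{eq:N} (valid for type A by the remark in the proof of Theorem~\ref{main:A}, and for the type B kernels $K_n$ via Lemma~\ref{reconst:N}) --- and then run a standard second-resolvent-identity perturbation. Notably, your key estimate $\norm{(\Dop\Qop)^k}\le t_k\,r^{-k}\norm{M}_{1,\infty}$ is exactly the bound the paper itself derives in the proof of Proposition~\ref{conv:reconstruction_operators}, so your argument effectively unifies the two convergence propositions around a single resolvent estimate. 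What your route buys: an explicit Lipschitz bound $\sup_{\lambda\in\Lambda}\norm{\Rop_{S,d,K}(\lambda)-\Rop_{S,d,K_n}(\lambda)}\le C'_\Lambda\norm{K-K_n}$, no need for the convolution lemma, and no need to control the taboo sequences $\hat\tau_{K_n,j}$ of the approximating kernels --- a point the paper's proof handles only implicitly, since it tacitly requires those sequences to be dominated super-exponentially uniformly in $n$. What the paper's route buys: it never leaves the series defining $\Rop_S(\lambda)$, so it requires neither the operator factorization nor the validity of~\eqref{eq:N} for the kernels $K_n$ (i.e.\ it uses the type B hypothesis on $K_n$ only through well-definedness of $\Rop_{S,d,K_n}$).
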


\begin{proof}
Given  $\Lambda\subset \C\setminus \Sigma_d$ compact, choose $r>0$ so that 
$\Omega_r=\{\lambda\in\C\colon 
\dist(\lambda,\Sigma_d)>r\}$ contains $\Lambda$.

Let 
$K(x, dy) = h(x,y)\,\mu(dy)$ and
$K_n(x, dy) = h_n(x,y)\,\mu(dy)$.
 By Definition~\ref{structural:AB} (of type A quasi-B structural sets)  $\norm{h-h_n}_{1,\infty}\to 0$.

We need to compare $R^{(p)}_{S,K,\lambda}$ with $R^{(p)}_{S,K_n,\lambda}$. The density of the first kernel is
$$ h_{S,K,\lambda}^{(p)}(x,y):= \int_{\Sc}\cdots \int_{\Sc}
\frac{h(x,z_1)\,h(z_1,z_2)\,\cdots\, h(z_{p-1},y)}{\prod_{j=1}^ {p-1} \left(\lambda-d(z_j) \right)}\, \mu(dz_1)\ldots \mu(dz_{p-1})  ,$$
and a similar formula holds for the density of $R^{(p)}_{S,K_n,\lambda}$
with $h_n$ instead of $h$.

Write $\hat \tau_{K,i}:= \sup_{x\in V} \tau_{S,K,i}(x,V)$. 
Let $M\geq 0$, $M\in L^1(V,\mu)$ be a common upper bound such that
$\abs{h(x,y)}\leq M(x)$ and $\abs{h_n(x,y)}\leq M(x)$ for all $x,y\in V$ and $n\in \N$.
Then
$$\norm{h_{S,K,\lambda}^{(p)} - h_{S,K_n,\lambda}^{(p)} }_{1,\infty} \leq  \frac{ \norm{M}_1^{p-1}}{r^{p-1}}\,
\norm{h-h_n}_{1,\infty} \, \sum_{j=1}^{p-1} 
\hat \tau_{K, j}\, \hat \tau_{K_n, p-j} . $$
Then what we need to complete the proof is the following lemma.
\end{proof}

\bigskip
 
\begin{lemma}
If $\tau_n$  decays super exponentially then
 so does $\sum_{j=1}^n \tau_j \tau_{n-j}$.
\end{lemma}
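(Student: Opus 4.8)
The plan is to convert the definition of super-exponential decay into a uniform exponential majorant, and then estimate the convolution sum crudely, term by term.

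First I would restate what super-exponential decay gives in a usable form. Since $\frac1n\log\tau_n\to-\infty$, for every fixed $A>0$ there is an index $N_A$ with $\tau_n\le e^{-An}$ whenever $n\ge N_A$. Absorbing the finitely many initial terms into a constant $C_A:=\max\bigl\{1,\ \max_{1\le n<N_A}\tau_n\,e^{An}\bigr\}$, this upgrades to the clean global bound $\tau_n\le C_A\,e^{-An}$ valid for all $n\ge 1$. This reformulation is the whole point: it trades the asymptotic statement for an honest inequality that behaves well under multiplication.

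The key step is then the elementary observation that the exponential rate is additive under convolution. Writing $\sigma_n:=\sum_{j}\tau_j\,\tau_{n-j}$ (a sum of at most $n$ terms), each summand satisfies $\tau_j\,\tau_{n-j}\le C_A^2\,e^{-Aj}\,e^{-A(n-j)}=C_A^2\,e^{-An}$, uniformly in $j$, because the two exponents telescope to $-An$. Hence $\sigma_n\le n\,C_A^2\,e^{-An}$.

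Finally I would take logarithms and divide by $n$, obtaining
$$\frac1n\log\sigma_n\ \le\ \frac{\log n+2\log C_A}{n}-A\ \longrightarrow\ -A \qquad(n\to\infty).$$
Thus $\limsup_{n\to\infty}\frac1n\log\sigma_n\le -A$, and since $A>0$ was arbitrary the limit must be $-\infty$, which is precisely super-exponential decay of $\sigma_n$. There is no genuine obstacle here: the only point to watch is that the polynomial prefactor $n$, coming from the number of terms, is harmless after the logarithmic rescaling since $\frac{\log n}{n}\to 0$; the $A$-dependent constant $C_A$ is likewise killed by the division by $n$, which is exactly why letting $A\to\infty$ at the end is legitimate.
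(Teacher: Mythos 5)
Your proof is correct and follows essentially the same route as the paper: both convert super-exponential decay into a uniform bound $\tau_n\le C_A e^{-An}$ for arbitrary rate, bound each of the $n$ summands by $C_A^2 e^{-An}$, and then dispose of the polynomial prefactor $n$ (the paper does this by doubling the rate and using $n e^{-2Ln}\le e^{-Ln}$, you by the equivalent $\tfrac{\log n}{n}\to 0$ after logarithmic rescaling). No gap; the two arguments differ only in this bookkeeping detail.
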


\begin{proof}
It follows from the definition that $\tau_n$ decays super exponentially to $0$ if and only if for all $L>0$ there exists $C>0$
such that $\tau_n \leq C\, e^{-L n}$.

Hence, given $L>0$ there exists $C>0$ such that 
$\tau_n \leq C\, e^{-2 L n}$ for all $n\geq 1$.
Therefore
$$\sum_{j=1}^n \tau_j \, \tau_{n-j}  \leq 
C^2 \sum_{j=1}^n e^{-2Lj} \,e^{-2 L(n-j)} \leq n\, C^2\, e^{2 L n}
\leq C^2 \, e^{- L n}  $$
which proves that the sum $\sum_{j=1}^n \tau_j \, \tau_{n-j} $
 decays super exponentially to $0$.
\end{proof}

\bigskip
 
\begin{proposition}\label{conv:reconstruction_operators}
Assume $V$, $K$ and $d \colon V\to \C$  satisfy (A1)-(A2) and $S$ is a structural set of type  A quasi-B. Consider the sequence of  kernels $K_n$ in Definition~\ref{structural:AB}. Then  given any  compact set $\Lambda\subset \C\setminus \Sigma_d$,
the following limit exists  
\begin{equation}
\Psi_S(\lambda)=\Psi_{S,d,K}(\lambda) :=\lim_{n\to+\infty}
\Psi_{S,d,K_n}(\lambda)   
\end{equation}
with uniform convergence in $\lambda\in\Lambda$. 
\end{proposition}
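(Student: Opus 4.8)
The plan is to reduce the claim to the uniform convergence of a family of resolvent-type operators. First I would recall, from the proof of Lemma~\ref{L1}, that the recursion~\eqref{recurs:II} defining $u=\Psi_{S,d,K_n}(\lambda)(f,v)$ is equivalent to the fixed-point equation~\eqref{u=-Df+QDu}, so that
$$\Psi_{S,d,K_n}(\lambda)(f,v)=\bigl(I-\Dop(\lambda)\,\Qop_{K_n}\bigr)^{-1}\bigl(\bar v-\Dop(\lambda)\,f\bigr).$$
Setting $E(\lambda)(f,v):=\bar v-\Dop(\lambda)\,f$, this input map is independent of $n$, and for $\lambda\in\Omega_r=\{\lambda\colon\dist(\lambda,\Sigma_d)>r\}$ (the set from Lemma~\ref{conv}) it is uniformly bounded, $\norm{E(\lambda)}\leq\max\{1,1/r\}$, because $\norm{\Dop(\lambda)}\leq 1/r$ there. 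Hence all the $n$-dependence is carried by the operators $(I-\Dop(\lambda)\,\Qop_{K_n})^{-1}$, and it suffices to show that, after fixing $r$ with $\Lambda\subset\Omega_r$, these converge in $\Lops(L^1(V))$ uniformly in $\lambda\in\Lambda$ to an invertible operator; the limit then defines $\Psi_{S,d,K}(\lambda):=(I-\Dop(\lambda)\,\Qop_K)^{-1}\circ E(\lambda)$.

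Second, I would prove that $I-\Dop(\lambda)\,\Qop_K$ is invertible, with a bound uniform in $\lambda$, using that $S$ is a structural set of type A for $K$ (Definition~\ref{structural:AB}(1)). Expanding $(\Dop(\lambda)\,\Qop_K)^n$ as in~\eqref{u decomp}, the inner multiple integral over $(\Sc)^{n-1}$ is precisely the $S$-taboo measure $\tau_{S,n}(z,\cdot)$ based at the starting point $z\in\Sc$, while the $n$ denominators $(\lambda-d(z))\prod_{p=1}^{n-1}(\lambda-d(z_p))$ contribute a factor bounded by $r^{-n}$ on $\Omega_r$. The type A estimate $\abs{\tau_{S,n}(z,B)}\leq t_n\int_B M(z,y)\,\mu(dy)$ then gives, exactly as in Lemma~\ref{conv}, the operator bound $\norm{(\Dop(\lambda)\,\Qop_K)^n}\leq t_n\,r^{-n}\,\norm{M}_{1,\infty}$ for $n\geq 2$. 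As $t_n\to 0$ super exponentially, d'Alembert's criterion makes the Neumann series converge, so $I-\Dop(\lambda)\,\Qop_K$ is invertible with $\norm{(I-\Dop(\lambda)\,\Qop_K)^{-1}}\leq\tilde C_r$ for a constant $\tilde C_r<+\infty$ independent of $\lambda\in\Omega_r\supseteq\Lambda$.

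Finally, I would pass to the limit via the perturbation identity
$$\bigl(I-\Dop(\lambda)\Qop_{K_n}\bigr)^{-1}-\bigl(I-\Dop(\lambda)\Qop_K\bigr)^{-1}=\bigl(I-\Dop(\lambda)\Qop_{K_n}\bigr)^{-1}\,\Dop(\lambda)\bigl(\Qop_{K_n}-\Qop_K\bigr)\,\bigl(I-\Dop(\lambda)\Qop_K\bigr)^{-1}.$$
By Proposition~\ref{Aop:bounded} and Definition~\ref{structural:AB}(2) one has $\norm{\Dop(\lambda)(\Qop_{K_n}-\Qop_K)}\leq r^{-1}\norm{K_n-K}\to 0$, uniformly in $\lambda\in\Lambda$. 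Choosing $n$ so large (independently of $\lambda$) that this is below $1/(2\tilde C_r)$, a standard Neumann-series perturbation bound gives $\norm{(I-\Dop(\lambda)\Qop_{K_n})^{-1}}\leq 2\tilde C_r$, whence the right-hand side is $O(\norm{K_n-K})$ uniformly on $\Lambda$; composing with the uniformly bounded $E(\lambda)$ yields $\sup_{\lambda\in\Lambda}\norm{\Psi_{S,d,K_n}(\lambda)-\Psi_{S,d,K}(\lambda)}\to 0$. The main obstacle is the second step: one must upgrade mere pointwise invertibility of $I-\Dop(\lambda)\Qop_K$ into a resolvent bound that is uniform in $\lambda$, because $K$ is only of type A and the convenient recursive type B structure used for each $K_n$ (and for $S_n$ in Definition~\ref{infinite:depth}) is unavailable here; it is exactly the super-exponential decay of the taboo sequence $t_n$ that supplies the $\lambda$-independent constant $\tilde C_r$.
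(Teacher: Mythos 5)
Your proposal is correct and follows essentially the same route as the paper: both rest on the representation $\Psi_{S,d,K_n}(\lambda)(f,v)=(I-\Dop(\lambda)\,\Qop_{K_n})^{-1}(\bar v-\Dop(\lambda)f)$ from Lemma~\ref{L1}, use the type A taboo estimate to bound $\norm{(\Dop(\lambda)\,\Qop_K)^n}\leq t_n\,r^{-n}\norm{M}_{1,\infty}$ and hence obtain invertibility of $I-\Dop(\lambda)\,\Qop_K$ (and of $I-\Dop(\lambda)\,\Qop_{K_n}$ for large $n$) with uniformly bounded inverses, and then pass to the limit using $\norm{K-K_n}\to 0$. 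Your explicit second-resolvent identity and the resulting $O(\norm{K_n-K})$ rate, uniform on $\Lambda$, merely spell out the convergence of the inverses that the paper asserts more tersely (the paper instead transfers the contraction bound $\norm{(\Dop\,\Qop_{K_n})^m}<1$ directly from $K$ to $K_n$).
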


\begin{proof}
We claim that for some $m\geq 1$, $\norm{(\Dop\,\Qop_K)^m} <1$.
 
From~\eqref{u decomp} in the proof of Lemma~\ref{L1},
we have 
\begin{align*}
((\Dop \Qop_K)^n h)(z) & =   \int_{\Sc}\cdots \int_{\Sc}
\frac{K(z,dz_1) \,\cdots\, K(z_{n-1},d z_n)\, h(z_n)}{(\lambda_0-d(z))\prod_{p=1}^ {n-1} \left(\lambda_0-d(z_p)  \right)}  
\end{align*}
 for all $z\in\Sc$, and $((\Dop \Qop_K)^n h)(z)=0$ whenever $z\in S$.

Arguing as in the proof of Lemma~\ref{conv}
we obtain for all  large $n$
$$ \norm{(\Dop \,\Qop)^n } \leq
\frac{t_n}{r^n}\,\norm{M}_{1,\infty} \ll 1 . $$
From the claim, and since by Definition~\ref{structural:AB}(2), 
$\lim_{n \to + \infty} \norm{K-K_n}_\infty=0$,
 we also have  $\norm{(\Dop\,\Qop_{K_n})^m} <1$ for all large enough $n$.  Hence the operators $I-\Dop\,\Qop_{K}$
and $I-\Dop\,\Qop_{K_n}$ are all invertible with  uniformly bounded inverses. In particular
$\lim_{n\to+\infty} (I-\Dop\,\Qop_{K_n} )^{-1}
= (I-\Dop\,\Qop_{K} )^{-1}$.
 
Given
$f\in L^1(V)$ and $v\in L^1(S)$, by the proof of Lemma~\ref{L1}, the `reconstructed' function $u_n=\Psi_{S,d,K_n}(\lambda)(f,v)$
is given by
$$u_n=(I-\Dop\,\Qop_{K_n})^{-1}(\bar v-\Dop f).$$
Therefore $(u_n)_n$ converges in $L^1$ to $u=(I-\Dop\,\Qop_{K})^{-1}(\bar v-\Dop f)$.
\end{proof}

\bigskip

The previous proposition allows us to define
the {\em  limit  reconstruction operators} as follows:
Given $\lambda\in \C\setminus \Sigma_d$,
$\Phi_S(\lambda) \colon L^1(S)\to L^1(V)$,
$$\Phi_S(\lambda)(v):= \Psi_{S}(\lambda)(0,v).$$

\bigskip

\begin{theorem}\label{main:AB}
Assume $V$, $K$ and $d \colon V\to \C$  satisfy (A1)-(A2) and $S$ is a structural set of type  A quasi-B.
Then
\begin{enumerate}
\item $ \spec(\Aop)\setminus \Sigma_d \subseteq \spec(\Rop_S) $.
\item Given $\lambda_0\in \C\setminus\Sigma_d$,
$\lambda_0$ is an eigenvalue of $\Aop$ iff 
$\lambda_0$ is an eigenvalue of $\Rop_S(\lambda_0)$.
\item If $\lambda_0 \in \C \setminus \Sigma_d$   is an eigenvalue of $\Aop$  and $u\in L^1(V)$ is an associated eigenfunction,
$\Aop\, u=\lambda_0\, u$, then  
$\Rop_S(\lambda_0)\, u_S=\lambda_0\, u_S$, i.e.,
$u_S$ is the corresponding eigenfunction for $\Rop_S(\lambda_0)$.

\item If $\lambda_0\in \C \setminus \Sigma_d$ is an eigenvalue of $\Rop_S(\lambda_0)$  and $v$ is an associated eigenfunction,
$\Rop_S(\lambda_0)\, v=\lambda_0\, v$,  then  
$u=\Phi_S(\lambda_0)(v)$ 
is an eigenfunction of $\Aop$, i.e.,
$\Aop\, u=\lambda_0\, u$.
\end{enumerate}
\end{theorem}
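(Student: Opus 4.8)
The plan is to reduce everything to two ingredients already in place: the type~A theorem (Theorem~\ref{main:A}), which applies because a type~A quasi-B structural set is in particular of type~A for $K$ (Definition~\ref{structural:AB}(1)), together with a limiting argument through the approximating kernels $K_n$ of Definition~\ref{structural:AB}, for which the full type~B conclusions (Theorem~\ref{main:B} and Remark~\ref{eigen}) hold and for which we dispose of the uniform convergences of Propositions~\ref{conv:reduced_operators} and~\ref{conv:reconstruction_operators}. Items (3) and the direct implication in (2) are then immediate from Theorem~\ref{main:A}: if $\Aop u=\lambda_0 u$ with $\lambda_0\in\C\setminus\Sigma_d$ and $u\neq 0$, then $\Rop_S(\lambda_0)\,u_S=\lambda_0\,u_S$ and $u_S$ is an eigenfunction.

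For item (4) and the converse implication in (2) I would start from an eigenfunction $v$ of $\Rop_S(\lambda_0)=\Rop_{S,d,K}(\lambda_0)$ and set $f_n:=(\Rop_{S,d,K_n}(\lambda_0)-\lambda_0 I)\,v\in L^1(S)$, so that $f_n\to 0$ by Proposition~\ref{conv:reduced_operators}. Since the extension $\bar f_n$ restricts to $f_n$ on $S$, Remark~\ref{eigen} applied to the type~B data $(V,K_n,d)$ produces $u_n:=\Psi_{S,d,K_n}(\lambda_0)(\bar f_n,v)$ with $(\Aop_{d,K_n}-\lambda_0 I)\,u_n=\bar f_n$. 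Splitting $u_n=\Psi_{S,d,K_n}(\lambda_0)(\bar f_n,0)+\Psi_{S,d,K_n}(\lambda_0)(0,v)$ by linearity, the first term tends to $0$ (the operators $\Psi_{S,d,K_n}(\lambda_0)$ are uniformly bounded, being convergent by Proposition~\ref{conv:reconstruction_operators}, while $\bar f_n\to 0$) and the second converges to $\Phi_S(\lambda_0)(v)=u$ by that same proposition. As $\norm{K-K_n}\to 0$ forces $\Aop_{d,K_n}\to\Aop$ in operator norm (Definition~\ref{structural:AB}(2)), passing to the limit in $(\Aop_{d,K_n}-\lambda_0 I)\,u_n=\bar f_n$ gives $\Aop u=\lambda_0 u$; and since the recursion~\eqref{recurs:II} defining $\Psi_{S,d,K_n}$ sets $u_n=v$ on $S$ for every $n$, also $u=v$ on $S$, so $u\neq 0$.

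For item (1) I would argue by contraposition: assume $\lambda_0\in\C\setminus\Sigma_d$ with $\lambda_0\notin\spec(\Rop_S)$, i.e.\ $\Rop_S(\lambda_0)-\lambda_0 I$ is invertible, and show $\Aop-\lambda_0 I$ is invertible. Injectivity comes for free, since $\lambda_0$ is then not an eigenvalue of $\Rop_S(\lambda_0)$, hence by (2) not an eigenvalue of $\Aop$. For surjectivity, fix $f\in L^1(V)$; by Proposition~\ref{conv:reduced_operators} and the openness of invertibility, $\Rop_{S,d,K_n}(\lambda_0)-\lambda_0 I$ is invertible with uniformly bounded inverses for large $n$, so $v_n:=(\Rop_{S,d,K_n}(\lambda_0)-\lambda_0 I)^{-1}f_S$ converges to $v:=(\Rop_S(\lambda_0)-\lambda_0 I)^{-1}f_S$. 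Remark~\ref{eigen} for $K_n$ yields $u_n:=\Psi_{S,d,K_n}(\lambda_0)(f,v_n)$ with $(\Aop_{d,K_n}-\lambda_0 I)\,u_n=f$, and exactly as before $u_n\to u:=\Psi_S(\lambda_0)(f,v)$, whence $(\Aop-\lambda_0 I)\,u=f$. Thus $\Aop-\lambda_0 I$ is onto and injective, hence invertible by the open mapping theorem, so $\lambda_0\notin\spec(\Aop)$.

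The main obstacle is precisely item (1). For general $d$ the operator $\Aop$ need not be weakly compact, so the spectral-continuity machinery of Lemma~\ref{wc:operators} is unavailable and one cannot simply transport the spectral inclusion from the approximants $\Aop_{d,K_n}$; one is forced to solve $(\Aop-\lambda_0 I)\,u=f$ constructively through the reconstruction operators. Making this work hinges on keeping two uniformities alive along the limit: transferring the invertibility of $\Rop_S(\lambda_0)-\lambda_0 I$ to the $\Rop_{S,d,K_n}(\lambda_0)-\lambda_0 I$ by norm-convergence and openness of invertibility, and the uniform operator-norm control of the family $\Psi_{S,d,K_n}(\lambda_0)$ coming from the uniformly bounded Neumann inverses $(I-\Dop\,\Qop_{K_n})^{-1}$ established in the proof of Proposition~\ref{conv:reconstruction_operators}. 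Everything else is a routine passage to the limit.
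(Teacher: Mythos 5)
Your proposal is correct, and it follows the paper's overall skeleton --- items (3) and the direct half of (2) via Theorem~\ref{main:A}, then a passage to the limit through the type-B approximants $K_n$, using Remark~\ref{eigen} for the data $(V,K_n,d)$ together with Propositions~\ref{conv:reduced_operators} and~\ref{conv:reconstruction_operators} --- but in the two delicate steps it takes a genuinely different and more elementary route, avoiding the weak-compactness machinery the paper relies on. In item (1), where you obtain $v_n=(\Rop_{S,d,K_n}(\lambda_0)-\lambda_0 I)^{-1}f_S\to v$ from norm convergence of the reduced operators plus openness of the set of invertible operators and continuity of inversion, the paper instead only records that $(v_n)_n$ is bounded, splits $\Rop_S(\lambda_0)=d_S+\widehat{\Rop}(\lambda_0)$ with $\widehat{\Rop}(\lambda_0)$ weakly compact (Proposition~\ref{Rhat:wc}), extracts a subsequence along which $\widehat{\Rop}(\lambda_0)v_n\to w$, and recovers the limit as $v=(d_S-\lambda_0)^{-1}(f_S-w)$. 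In item (4) the divergence is larger: the paper perturbs the eigenvalue, choosing $\lambda_n\to\lambda_0$ with $\lambda_n\in\spec(\Rop_{S,d,K_n}(\lambda_n))$, takes unit eigenfunctions $v_n$ of $\Rop_{S,d,K_n}(\lambda_n)$, and invokes Lemma~\ref{wc:operators} to extract a convergent subsequence before reconstructing; your forcing-term trick, $f_n:=(\Rop_{S,d,K_n}(\lambda_0)-\lambda_0 I)\,v\to 0$ fed into Remark~\ref{eigen} as the pair $(\bar f_n,v)$, keeps the prescribed eigenfunction $v$ fixed throughout. This buys something concrete: the paper's subsequential limit is a priori only \emph{some} eigenfunction of $\Rop_S(\lambda_0)$, which agrees with the given $v$ only when the eigenspace is one-dimensional (or after further argument), whereas your construction proves item (4) exactly as stated, for every eigenfunction $v$, using nothing beyond uniform boundedness of the operators $\Psi_{S,d,K_n}(\lambda_0)$ and the two convergence propositions. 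What the paper's heavier route buys in return is spectral-perturbation information about the approximants (nearby eigenvalues and eigenfunctions of $\Rop_{S,d,K_n}$), which is what the numerical algorithm of Section~\ref{countablegraphs} exploits; your proof, being a pure soft-analysis limit, does not yield that by-product, but as a proof of Theorem~\ref{main:AB} it is complete and, in item (4), tighter than the paper's.
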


\begin{proof}
Since $S$ is a structural set of type A, by Theorem~\ref{main:A},
item (3) and the direct implication in (2) follow. The converse implication in (2) will follow from item (4).

Let us prove item (1).

Take $\lambda_0\notin \spec(\Rop_S)$.
This means that $\Rop_S(\lambda_0)-\lambda_0 I$ is an invertible operator. We are going to prove that $\Aop-\lambda_0 I$ is also invertible. By the direct implication in (2) we know that
$\Aop-\lambda_0 I$ is injective. Therefore, it is enough to show that 
$\Aop-\lambda_0 I$ is surjective.

To simplify notations we will write
$\Aop$, $\Aop_n$, $\Rop$, $\Rop_n$, $\Psi$ and $\Psi_n$
respectively instead of $\Aop_{S,d,K}$, $\Aop_{S,d,K_n}$, 
$\Rop_{S,d,K}$, $\Rop_{S,d,K_n}$, $\Psi_{S,d,K}$ and
$\Psi_{S,d,K_n}$.

Since $\lambda_0\notin \spec(\Rop)$ and by Proposition~\ref{conv:reduced_operators}  $\Rop_n$ converges to $\Rop$,
one has $\lambda_0\notin \spec(\Rop_n)$ for all large enough $n$.

Given $f\in L^1(V)$,  because $\Rop_n(\lambda_0)-\lambda_0 I$ is invertible, there exists $v_n\in L^1(S)$ such that 
$(\Rop_n(\lambda_0) -\lambda_0 I)\,v_n = f_S$.
The sequence $v_n$ is bounded because the operator $\Rop_n(\lambda_0)-\lambda_0 I$ is invertible.

By Proposition~\ref{Rhat:wc} the operator  $\Rop(\lambda_0)$ can be decomposed as  
$\Rop(\lambda_0)=d_S+ \widehat{\Rop}(\lambda_0)$ where $d_S$ is a diagonal operator and 
$\widehat{\Rop}(\lambda_0)$ is weakly compact.
Analogously, the operator  $\Rop_n(\lambda_0)$ is decomposed as  
$\Rop_n(\lambda_0)=d_S+ \widehat{\Rop}_n(\lambda_0)$, with the same diagonal part $d_S$   and where
$\widehat{\Rop}_n(\lambda_0)$ is also weakly compact.
Moreover, $\widehat{\Rop}_n(\lambda_0)$ converges to
$\widehat{\Rop}(\lambda_0)$ as $n\to+\infty$.

By weak compactness of $\widehat{\Rop}(\lambda_0)$, we can assume that
$\widehat{\Rop}(\lambda_0)\,v_n$ converges to some $w\in L^1(S)$. 
Since
\begin{align*}
 f_S -\widehat{\Rop}_n(\lambda_0)\,v_n  = (\Rop_n(\lambda_0)-\lambda_0 )\,v_n -\widehat{\Rop}_n(\lambda_0)\,v_n  =
 (d_S -\lambda_0 )\,v_n    
\end{align*}
and
\begin{align*}
\norm{\widehat{\Rop}_n(\lambda_0)\,v_n - w} &\leq 
\norm{\widehat{\Rop}_n(\lambda_0)\,v_n - \widehat{\Rop}(\lambda_0)\,v_n} +
\norm{\widehat{\Rop}(\lambda_0)\,v_n - w}\\
&\leq \norm{\widehat{\Rop}_n(\lambda_0) - \widehat{\Rop}(\lambda_0) }\,\sup_n \norm{v_n} 
+ \norm{\widehat{\Rop}(\lambda_0)\,v_n - w},
\end{align*}
we conclude that $(d_S -\lambda_0 )\,v_n $ converges to $f_S-w$, and hence 
$$\lim_{n\to +\infty} v_n =   (d_S-\lambda_0)^{-1}(f_S - w) =: v \, \text{ in } \, L^1.$$

By Remark~\ref{eigen} the function 
$u_n= \Psi_n(f,v_n)$ satisfies  
\begin{equation}
\label{An-l0}
(\Aop_n-\lambda_0 )\,u_n= f .
\end{equation}

On the other hand we have
\begin{align*}
\norm{\Psi_n(f,v_n)-\Psi(f,v)} &\leq 
\norm{\Psi_n(f,v_n)-\Psi(f,v_n)}
+ \norm{\Psi(f,v_n)-\Psi(f,v)}\\
&\leq 
\norm{\Psi_n-\Psi}  \,\sup_n \norm{v_n} 
+ \norm{\Psi}\,\norm{v_n -v}  
\end{align*}
which proves that $u_n= \Psi_n(f,v_n)$ converges to
$u=\Psi(f,v)$ in $L^1$.

Thus, taking the limit in~\eqref{An-l0} we get that $(\Aop-\lambda_0 )\,u= f$, which proves that $\lambda_0\notin \spec(\Aop)$.

Finally we prove (4).
 
Let $\lambda_0\in \C\setminus\Sigma_d$ be an eigenvalue of
$\Rop(\lambda_0)$ and $v\in L^1(S)$ be an associated eigenfunction,
$\Rop(\lambda_0)\, v=\lambda_0\,v$.
Since, by Proposition~\ref{conv:reduced_operators}, $\Rop_n$ converges to $\Rop$, 
there exist $(\lambda_n)_n$ satisfying
$\lambda_0 = \lim_{n \to +\infty} \lambda_n$ such that
$\lambda_n \in \spec(\Rop_n(\lambda_0))$.
By the uniformity of convergence in Proposition~\ref{conv:reduced_operators}, changing slightly the $\lambda_n$ if necessary, we may assume that 
$\lambda_n \in \spec(\Rop_n(\lambda_n))$.

Let $v_n \in L^1(S)$ be a unit eigenfunction of $\Rop_n(\lambda_n)$, i.e., $\Rop_n(\lambda_n) v_n = \lambda_n v_n$. 
Consider, as before, the weakly compact operators 
$\widehat{\Rop}(\lambda_0)$ and $\widehat{\Rop_n}(\lambda_n)$
so that $\Rop(\lambda_0)$ and $\Rop_n(\lambda_n)$ decompose as
$\Rop(\lambda_0)=d_S+ \widehat{\Rop}(\lambda_0)$ and 
$\Rop(\lambda_n)=d_S+ \widehat{\Rop_n}(\lambda_n)$.
Moreover, again by uniformity of convergence, $\widehat{\Rop}_n(\lambda_n)$ converges to
$\widehat{\Rop}(\lambda_0)$ as $n\to+\infty$.
By Lemma~\ref{wc:operators}, extracting a subsequence if necessary we can assume that 
$(v_n)_n$ converges to $v$.

Since $S$ is a structural set of type B for $K_n$, by Theorem~\ref{main:B}(4), there exists  a sequence of eigenfunctions $u_n \in L^1(V)$ such that
$$u_n=\Psi_n(\lambda_n)(0,v_n) \hspace{0.3cm} \mbox{and} \hspace{0.3cm} 
\Aop_n u_n = \lambda_n u_n.$$
Repeating the argument in the proof of item (1), now with $f =0$, and using uniformity of convergence in Proposition~\ref{conv:reconstruction_operators}, we obtain that
$u_n=\Psi_n(\lambda_n)(0,v_n)$ converges to 
$u=\Psi(\lambda_0)(0,v)=\Phi(\lambda_0)(v)$.
Hence $\Aop \, u=\lambda_0\,u$.
\end{proof}

\bigskip


\section{ Infinite graphs }
\label{countablegraphs}

In this section we specialize the theory in Section~\ref{infinitemodel} to countably infinite graphs with a finite structural set.
 We also propose a numerical algorithm to approximate the eigenfunctions of such graphs.
 
\bigskip

\begin{definition}\label{graph:countable} 
A  countable weighted graph is a pair
$G=(V,w)$ where $V$ is a countable set and 
$w \colon V\times V\to\C$ is any function, 
called the weight function of $G$.
\end{definition}

\bigskip

Assume $G=(V,w)$ is a countable weighted graph
over an infinite set $V$.  The  weight function $w \colon V\times V\to\C$
determines the following 
kernel $K(i,.)=\sum_{j \in V}  w(i,j)\,\delta_j(.)$,
where $\delta_j$ stands for the Dirac measure supported on $j$.

\bigskip

We define the Banach spaces
$$ L^1(V):=\{\, f \colon V\to\C\,\colon\,
\norm{f}_1:=\sum_{i\in V} \abs{f(i)} <+\infty\,\}\;, $$
and
$$ L^{1,\infty}(V \times V):=\{\, w \colon V \times V \to \C \,\colon\,
\norm{w}_{1,\infty}:=\sup_{j\in V}  \sum_{i\in V}
\abs{w(i,j)} <  +\infty\,\}\;. $$
Note that identifying the weight function $w$ with the kernel
$K(i,.)=\sum_{j \in V}  w(i,j)\,\delta_j(.)$
the norm $\norm{w}_{1,\infty}$ matches the one defined in~(\ref{kernel norm}).
 
 \bigskip
 
\begin{definition}\label{bounded:function} 
We say that the weight  function 
$w \colon V\times V\to\C$ is   {\em  $(1,\infty)$-bounded} when 
$w \in L^{1,\infty}(V \times V)$.
\end{definition}

\bigskip

Each $(1,\infty)$-bounded
function $w$ determines a Markov
operator $\Aop_w \colon L^1(V)\to L^1(V)$,
\begin{equation}\label{operator:III}
(\Aop_w f)(i):= \sum_{j\in V}  w(i,j)\, f(j) \;. 
\end{equation}

\bigskip

The following proposition is a simple observation.

\begin{proposition}
If $w$ is $(1,\infty)$-bounded
then $\Aop_w \in \Lops\left(L^1(V)\right).$
Moreover, 
$\Aop_{w}$
has operator norm
$$ \norm{\Aop_{w}} \leq  \norm{w}_{1,\infty}  .$$
\end{proposition}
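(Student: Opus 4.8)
The plan is to recognize this statement as the counting-measure specialization of Proposition~\ref{Aop:bounded} and to supply the short computation underlying it. First I would observe that the hypothesis $w \in L^{1,\infty}(V \times V)$ already forces each entry to be uniformly bounded: for fixed $j$ one has $|w(i,j)| \le \sum_{i' \in V} |w(i',j)| \le \norm{w}_{1,\infty}$, so $\sup_{i,j} |w(i,j)| \le \norm{w}_{1,\infty} < +\infty$. Consequently, for every $f \in L^1(V)$ and every $i \in V$ the defining series is absolutely convergent, since $\sum_{j} |w(i,j)|\,|f(j)| \le \norm{w}_{1,\infty}\,\norm{f}_1$; hence $(\Aop_w f)(i)$ is well defined pointwise, and $\Aop_w$ is manifestly linear from its formula.

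The key step is then the norm estimate, obtained by interchanging the order of summation:
\begin{align*}
\norm{\Aop_w f}_1 &= \sum_{i \in V} \sabs{\sum_{j \in V} w(i,j)\, f(j)}
\le \sum_{i \in V} \sum_{j \in V} |w(i,j)|\,|f(j)| \\
&= \sum_{j \in V} |f(j)| \sum_{i \in V} |w(i,j)|
\le \Bigl(\sup_{j\in V}\sum_{i \in V} |w(i,j)|\Bigr)\,\norm{f}_1
= \norm{w}_{1,\infty}\,\norm{f}_1,
\end{align*}
where the interchange is justified by Tonelli's theorem for nonnegative double series (equivalently, by $\sigma$-finiteness of the counting measure on the countable set $V$). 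This simultaneously shows $\Aop_w f \in L^1(V)$, so that $\Aop_w$ maps $L^1(V)$ into itself, and gives the bound $\norm{\Aop_w} \le \norm{w}_{1,\infty}$.

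More economically, I would note that this is literally an instance of a result already established: take $\mu$ to be the counting measure on $V$ (which is $\sigma$-finite because $V$ is countable), set $d \equiv 0$, and let $K$ be the kernel of density $w$, that is $K(i,B) = \int_B w(i,j)\,\mu(dj)$. As remarked in the text, $\norm{K} = \norm{w}_{1,\infty}$ and $\Aop_w = \Qop_K$, so Proposition~\ref{Aop:bounded} yields directly $\Aop_w \in \Lops(L^1(V))$ with $\norm{\Aop_w} \le \norm{d}_\infty + \norm{K} = \norm{w}_{1,\infty}$.

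There is no genuine obstacle here: the only point requiring care is the Tonelli interchange of the double sum together with the absolute convergence of the inner series for each fixed $i$, both of which are handled by the uniform entrywise bound noted at the outset. This is why the statement is, as the text says, a simple observation.
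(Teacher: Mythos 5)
Your proof is correct and matches what the paper intends: the statement is given there as ``a simple observation'' with no written proof, and the paper's preceding remark identifying $w$ with the kernel $K(i,\cdot)=\sum_{j\in V} w(i,j)\,\delta_j(\cdot)$, under which $\norm{w}_{1,\infty}$ equals the kernel norm of~\eqref{kernel norm}, is exactly your reduction to Proposition~\ref{Aop:bounded} with $d\equiv 0$. Your direct Tonelli computation supplies the standard estimate underlying both statements, so nothing is missing.
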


\bigskip

\begin{theorem}\label{main:AB:countable}
Let $G=(V,w)$ be a countable weighted graph 
 and $S \subset V$ be a finite set.
Assume that:
\begin{enumerate}
\item[$\mbox{(i)}$] $w$ is $(1,\infty)$-bounded;
\item[$\mbox{(ii)}$] $S$ is a structural set of type A quasi-B for 
$w$ (in the sense of Definition~\ref{structural:AB}).
\end{enumerate}
Then
\begin{enumerate}
\item $ \spec(\Aop_w) \setminus \Sigma = \spec(\Rop_{S,w}) $.
\item Given $\lambda_0\in \C\setminus\Sigma$,
$\lambda_0$ is an eigenvalue of $\Aop_w$ iff 
$\lambda_0$ is an eigenvalue of $\Rop_{S,w}(\lambda_0)$.
\item If $\lambda_0 \in \C \setminus \Sigma$   is an eigenvalue of $\Aop_w$  and $u\in L^1(V)$ is an associated eigenfunction,
$\Aop_w\, u=\lambda_0\, u$, then  
$\Rop_{S,w}(\lambda_0)\, u_S=\lambda_0\, u_S$, i.e.,
$u_S$ is the corresponding eigenfunction for $\Rop_{S,w}(\lambda_0)$.
\item If $\lambda_0\in \C \setminus \Sigma$ is an eigenvalue of $\Rop_{S,w}(\lambda_0)$  and $v$ is an associated eigenfunction,
$\Rop_{S,w}(\lambda_0)\, v=\lambda_0\, v$,  then $u=\Phi_{S,w}(\lambda_0)(v)$ is an eigenfunction of $\Aop_w$, i.e.,
$\Aop_w\, u=\lambda_0\, u$.
\end{enumerate}
\end{theorem}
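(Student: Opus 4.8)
The plan is to deduce this theorem as a specialization of the abstract Theorem~\ref{main:AB} to the countable graph setting, the only genuinely new ingredient being the upgrade of the spectral inclusion in item (1) to an equality, which exploits the finiteness of $S$. First I would fix the translation between $(G,S)$ and the abstract data $(V,\FF,\mu,K,d,S)$ of Section~\ref{infinitemodel}. I take $\mu$ to be the counting measure on $V$ (so $\FF$ is the full power set), set $d(i):=w(i,i)$, and let $K$ be the off-diagonal part of $w$, i.e.\ $K(i,\cdot):=\sum_{j\neq i} w(i,j)\,\delta_j(\cdot)$. With this choice $\Aop_w=\Aop_{d,K}$, the diagonal spectrum $\Sigma$ of the theorem is read as $\Sigma_d=\overline{d(\Sc)}$, and $K$ has no diagonal part on $\Sc$ as required. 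Hypothesis (i) is precisely (A1) for $K$ (via the identification of $\norm{w}_{1,\infty}$ with $\norm{K}$ noted after Definition~\ref{bounded:function}); since $w$ is $(1,\infty)$-bounded, $d\in L^\infty(V)$, so (A2) holds; and hypothesis (ii) says that $S$ is a structural set of type A quasi-B for $K$.

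With the hypotheses of Theorem~\ref{main:AB} verified, items (3), (4) and (2) of the present theorem are immediate transcriptions of the corresponding items there, and that theorem also yields the inclusion $\spec(\Aop_w)\setminus\Sigma\subseteq\spec(\Rop_{S,w})$ of item (1). It therefore remains only to establish the reverse inclusion. Here I would argue as in Remark~\ref{spec=}: since $S$ is finite, $L^1(S)\cong\C^S$ is finite dimensional, so for each $\lambda\in\C\setminus\Sigma$ the operator $\Rop_{S,w}(\lambda)-\lambda I$ fails to be invertible if and only if $\lambda$ is an eigenvalue of $\Rop_{S,w}(\lambda)$. Hence any $\lambda_0\in\spec(\Rop_{S,w})$ is an eigenvalue of $\Rop_{S,w}(\lambda_0)$, and by the ``if'' direction of item (2) it is then an eigenvalue of $\Aop_w$, so in particular $\lambda_0\in\spec(\Aop_w)\setminus\Sigma$. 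This gives $\spec(\Rop_{S,w})\subseteq\spec(\Aop_w)\setminus\Sigma$ and completes item (1); as a byproduct the sandwiching shows the spectrum of $\Aop_w$ off $\Sigma$ is pure point.

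I expect the main obstacle to lie in the setup rather than in the argument: one must check that the countable graph genuinely fits the abstract hypotheses, in particular that the decomposition $w=d+K$ puts the theory in the regime where $K$ has no diagonal on $\Sc$, and that the two norms $\norm{w}_{1,\infty}$ and $\norm{K}$ agree so that the type A quasi-B structure transfers verbatim. Once the dictionary is fixed, everything else is a direct appeal to Theorem~\ref{main:AB} together with the finite-dimensional spectral equivalence used for the equality in item (1); no new analytic estimates are needed.
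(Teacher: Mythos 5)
Your proof is correct and follows the same skeleton as the paper's own (very short) proof: both deduce the theorem as a corollary of Theorem~\ref{main:AB}, with items (2)--(4) obtained by direct transcription and the equality in item (1) coming from the finite dimensionality of $L^1(S)\cong\C^S$ exactly as in Remark~\ref{spec=}. The only divergence is the dictionary. The paper takes $d\equiv 0$ and lets $K$ be the \emph{full} kernel $K(i,\cdot)=\sum_{j\in V}w(i,j)\,\delta_j(\cdot)$, so that (A2) is automatic and hypothesis (ii) is \emph{literally} the statement that $S$ is of type A quasi-B for $K$; no transfer argument is needed. Your choice $d(i)=w(i,i)$ with $K$ the off-diagonal part is also workable, but it creates exactly the obligations you flag at the end: hypothesis (ii) is stated for $w$, not for your $K$, so you must check that the taboo measures, the type B conditions for the approximating kernels, and the convergence $\norm{K-K_n}\to 0$ are unaffected by discarding the diagonal. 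These checks do succeed, but the reason is Remark~\ref{nodiagonal}: hypothesis (ii) forces $w(i,i)=0$ for every $i\in\Sc$, so the two decompositions differ only by diagonal atoms at vertices of $S$, which never enter the taboo measures or the type B conditions; in particular both dictionaries produce the same reduced operators $\Rop_{S,w}(\lambda)$ and the same exceptional set $\Sigma=\{0\}$. One small inaccuracy: $\norm{w}_{1,\infty}$ and $\norm{K}$ need not agree (they can differ through diagonal entries at vertices of $S$); what is true, and all you need, is $\norm{K}\le\norm{w}_{1,\infty}$, so $(1,\infty)$-boundedness transfers. In short, your route is sound, but the paper's normalization $d\equiv 0$ makes the reduction to Theorem~\ref{main:AB} immediate and avoids this bookkeeping.
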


\begin{proof}
This theorem is a corollary of Theorem~\ref{main:AB}.
Notice that item (i) implies (A1), while (A2) is automatic since we are taking $d=0$.  Equality in item (1) holds because $S$ is finite (see Remark~\ref{spec=}).
\end{proof}

\bigskip

We propose now a numerical algorithm to approximate the eigenfunctions of 
a countably infinite graph.
The input and output of the algorithm will consist on the following:

\bigskip

\noindent
{Input:} \; 
\begin{enumerate}
\item[$\bullet$] a countable weighted graph $G=(V,w)$,
\item[$\bullet$] a finite set $S$,
\item[$\bullet$] a sequence $(w_n)_n$ of weight functions,
\item[$\bullet$] an integer $k$,
\item[$\bullet$] a finite subset $V_0$ such that $S\subseteq V_0\subseteq V$,
\end{enumerate}
where $G=(V,w)$, $S$ and 
$(w_n)_n$
satisfy the assumptions (i)-(ii) of Theorem~\ref{main:AB:countable}.
The weight functions $w_n$ are the $(1,\infty)$-bounded kernels in Definition~\ref{structural:AB}.
\bigskip

\noindent
{ Output:} \; an approximation of the $k$-th eigenvalue $\lambda$ of $\Aop_w$, and an approximation of the values $u(i)$  of a $\lambda$-eigenfunction $u$ for $\Aop_w$ computed at all vertices $i\in V_0$.

\bigskip

Now we describe the steps of the proposed algorithm.

\bigskip

\noindent
{Steps:} \; 
\begin{enumerate}
\item[(1)] Compute the $k$-th eigenvalue $\lambda_{k,n}$ of $\Aop_{w_n}$ for $n$ large, or, alternatively, compute the $k$-the zero of the analytic function $\det[ \Rop_{S,w_n}(\lambda)-\lambda\,I ]$.

\item[(2)] Compute  an  associated eigenvector $v_0$ of the 
finite dimensional matrix $\Rop_{S,w_n}(\lambda_{k,n})$, for some large $n$.

\item[(3)] 
Use the reconstruction operator 
$\Phi_{S,w_n}(\lambda_{k,n})(v_0)$
to obtain the wanted approximation.
\end{enumerate}

\bigskip


\section{A family of infinite Markov chains}
\label{Markov}

Consider a countable weighted graph $G=(V,w)$ such that 
$(w(i,j))_{i,j \in V} $ is a stochastic matrix.
More precisely assume $w(i,j)=p_{ij}$ is the transition probability from state $j$ to state $i$
of some Markov chain 
with infinite countable state space $V$. Note that in this case  $\lambda=1$ is an eigenvalue of the Markov operator $\Aop_w$. 
We remark that the (normalized) eigenvectors of $\Aop_w$, corresponding to the eigenvalue $\lambda=1$, are precisely the stationary measures of the given Markov process.

In this section we present an example where the theory developed is applied to give a closed formula for the stationary probability measures of a family of countable Markov chains.

\bigskip

Consider a Markov chain with state space $\N=\{1,2,\ldots\}$ and
transition probability matrix $(w(i,j))_{i,j \in \N}$ defined by
\begin{enumerate}
\item[(i)] \, $w(i,1)=a_i$, for all $i \in \N$;
\smallskip
\item[(ii)] \, $w(2,2)=1-b_1$;
\smallskip
\item[(iii)] \, $w(i-1,i)=b_{i-1}$, for all $i \geq 2$; \,
\smallskip
\item[(iv)] \, $w(1,i)=1-b_{i-1}$, for all $i \geq 3$; \,
 and 
\smallskip
\item[(v)] \, $w(i,j)=0 \hspace{0.2cm}$ otherwise,
\end{enumerate}
where 
$w(i,j)$ represents the transition probability from state $j$ to state $i$ (see Figure~\ref{markov:AB:finite}).
We assume that the 
transition probabilities $(a_i)_{i \in \N}$ and 
$(b_i)_{i \in \N}$ satisfy the following conditions:

\medskip

\begin{enumerate}
\item[(B1)] 
$\sum_{i=1}^\infty a_i=1 \hspace{0.2cm} \mbox{\text and} \hspace{0.2cm} 0 < a_i, b_i <1, \hspace{0.2cm} \mbox{\text for all} \,\, i \in \N\,;$ and

\smallskip

\item[(B2)]
there exist $C>1$ and $0<\rho<1$ such that $
b_i < C \rho^i\,$, for all $i \in \N$.
\end{enumerate}

We notice that condition (B2) implies that the sequence
$t_n:=\prod_{i=1}^{n-1} b_i$ converges to $0$ super
exponentially. Indeed, for all $n \in \N$,
$$\prod_{i=1}^{n-1} b_i < \prod_{i=1}^{n-1} C \rho^i  =C^{n-1}  \rho^{n(n-1)/2}\,,$$
which converges to $0$ super
exponentially.

\medskip

\begin{figure}[h]
\centerline{\includegraphics[width=0.6\textwidth]{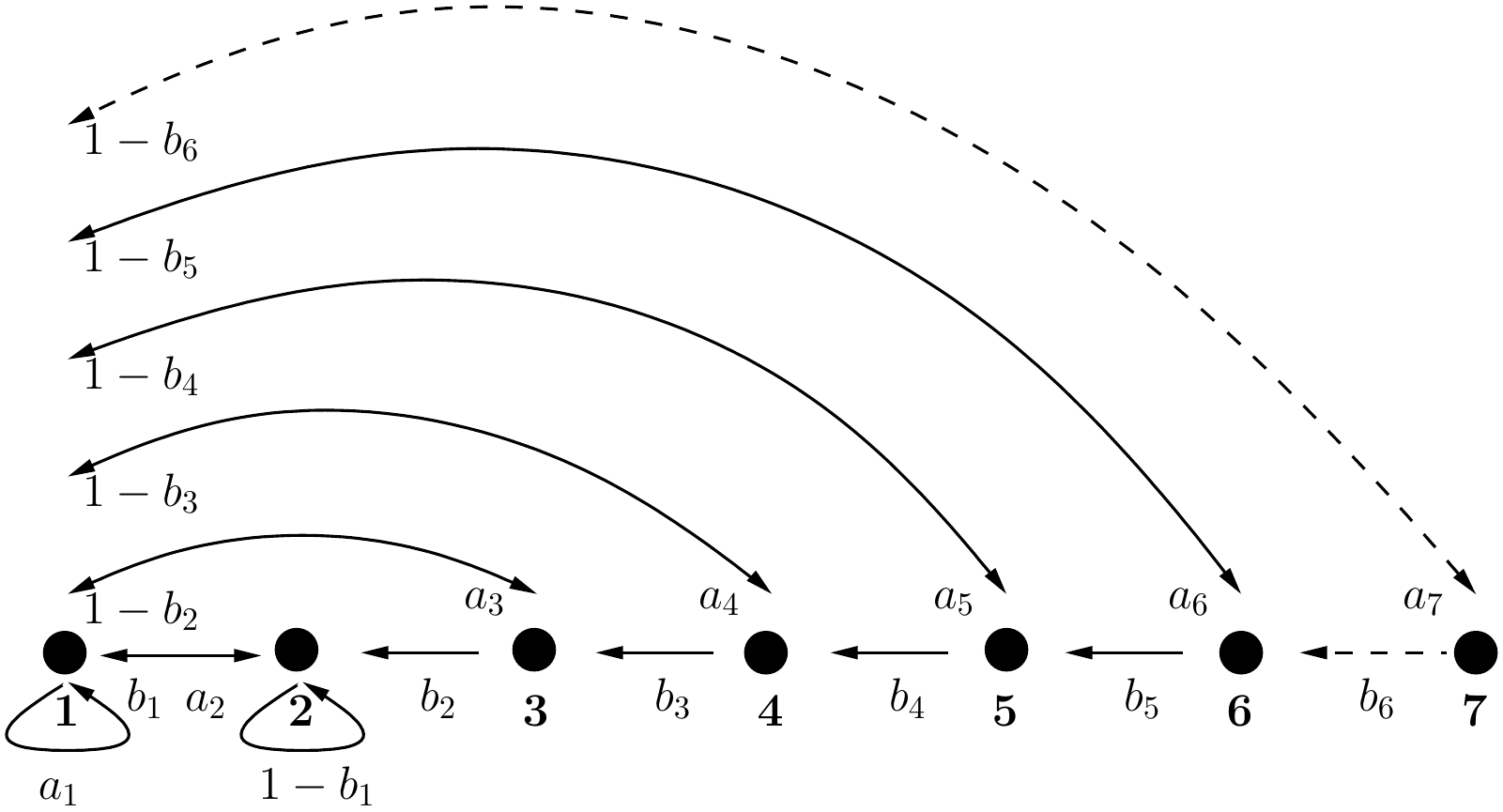}}
\caption{An infinite Markov chain.}
\label{markov:AB:finite}
\end{figure}

\begin{proposition}\label{prob:markov}
Consider a Markov chain with transition probability matrix 
$(w(i,j))_{i,j \in \N}$ defined by
$(i)$-$(v)$ and satisfying conditions (B1)-(B2).

This Markov chain has a unique stationary probability measure
$q=(q(i))_{i \in \N}$ given by
$$q(i)=\frac{u(i)}{\sum_{j=1}^\infty \left|u(j)\right|},$$
where 
$$ \left\{ \begin{array}{rclll}
u(i) &=&   v(i)  & \text{ if } & i =1,2\\
\\
u(i) &=&  
\sum_{k=1}^{\infty}\left(\prod_{\ell=0}^{k-2}b_{i+\ell}\right) a_{i+k-1} \, v(1) 
  & \text{ if } & i \geq 3\\
\end{array}\right.   $$
and $(v(1),v(2))$ is any eigenvector of the matrix
$$\Rop =\left[
\begin{array}{lc}
1 - \sum_{\ell=0}^{\infty} \left(\prod_{k=1}^{\ell} b_{k+1} \right) a_{\ell +2} & \hspace{0.2cm} b_1   \\[4pt] 

\sum_{\ell=0}^{\infty} \left(\prod_{k=1}^{\ell} b_{k+1} \right) a_{\ell +2} &  \hspace{0.2cm} 1-b_1    \\[4pt] 
\end{array}
\right]$$
associated with the eigenvalue $\lambda=1$.

\end{proposition}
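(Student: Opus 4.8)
The plan is to take $S=\{1,2\}$ as structural set and deduce everything from Theorem~\ref{main:AB:countable}. First I would record the local structure of the weighted graph: for a vertex $i\ge 3$ the only nonzero weights are $w(i,1)=a_i$ and $w(i,i+1)=b_i$, so that $\supp(K_i)=\{1,i+1\}$ and hence $\varphi_{\Sc}(i)=\Sc\cap\supp(K_i)=\{i+1\}$. Consequently $\varphi_{\Sc}^{\,k}(i)=\{i+k\}$ is never empty and the depth sets of Definition~\ref{infinite:depth} satisfy $S_n=S$ for every $n$; thus $S$ is \emph{not} of type B. The boundedness hypothesis (i) is immediate since $w$ is column-stochastic, so $\norm{w}_{1,\infty}=1$, and with $d\equiv 0$ one has $\Sigma=\{0\}$, whence $\lambda=1\in\C\setminus\Sigma$ is an admissible spectral value.

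The crux is therefore to verify that $S$ is of type A quasi-B. For type A, I would compute the taboo measures along the forced excursions through $\Sc$: for $i\ge 3$ the measure $\tau_{S,n}(i,\cdot)$ is concentrated on $\{1,i+n\}$ with mass governed by the product $\prod_{j=i}^{i+n-2}b_j$, and the cases $i\in\{1,2\}$ reduce to the same products after one initial step. Using (B2) in the form $b_j<C\rho^j$ these products are bounded by $C^{n-1}\rho^{(n-1)i+(n-2)(n-1)/2}$, whose quadratic exponent yields the super-exponentially decaying sequence $t_n:=C^{n-1}\rho^{(n-2)(n-1)/2}$; splitting off the residual dependence on $i$ and on the endpoint produces a density $M$ whose Gaussian-type decay keeps it in $L^{1,\infty}(V\times V)$, giving Definition~\ref{structural:A}. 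For the quasi-B part I would truncate, setting $w_n(i,i+1):=0$ for $i\ge n$ and $w_n:=w$ otherwise; then $\varphi_{\Sc}$ for $w_n$ is empty on $\{i\ge n\}$ and every orbit eventually vanishes, so $S$ is of type B for each $w_n$ (Definition~\ref{structural:B}), while $\norm{w-w_n}_{1,\infty}=\sup_{k\ge n}b_k\le C\rho^n\to 0$.

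With the hypotheses in hand, Theorem~\ref{main:AB:countable} applies. I would then compute the reduced operator at $\lambda=1$, where $d\equiv 0$ makes every denominator equal to $1$, so that $R_{S,1}(x,y)=\sum_{n\ge 1}\tau_{S,n}(x,\{y\})$. Summing the forced-path contributions gives $R_{S,1}(1,2)=b_1$, $R_{S,1}(2,2)=1-b_1$ and $R_{S,1}(2,1)=\sum_{\ell\ge 0}\bigl(\prod_{k=1}^{\ell}b_{k+1}\bigr)a_{\ell+2}$, matching three entries of the stated matrix $\Rop$; the remaining entry follows from column-stochasticity of $\Rop$ at $\lambda=1$ (equivalently, by direct summation), which holds because the chain returns to $S$ almost surely, as $\prod_i b_i=0$. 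Solving $\Rop\,v=v$ then gives a one-dimensional eigenspace spanned by a strictly positive vector $(v(1),v(2))$.

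Finally I would reconstruct. By Theorem~\ref{main:AB:countable}(4) and Proposition~\ref{conv:reconstruction_operators}, $u=\Phi_{S,w}(1)(v)$ is the corresponding eigenfunction of $\Aop_w$ in $L^1(V)$; its defining relation $u=\bar v+\Dop\Qop u$ reads $u(z)=a_z\,u(1)+b_z\,u(z+1)$ for $z\ge 3$, and solving this forward---whose remainder $\bigl(\prod_{\ell=0}^{K-1}b_{z+\ell}\bigr)u(z+K)$ vanishes as $K\to\infty$ because $u\in L^1$ and $\prod b\to0$---yields exactly the stated series for $u(i)$, $i\ge 3$. Positivity of $(v(1),v(2))$ makes $u>0$, and $u\in L^1(V)$ lets me normalize to the probability vector $q=u/\sum_j|u(j)|$. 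Uniqueness is then automatic: by the eigenspace correspondence in Theorem~\ref{main:AB} the eigenvalue-$1$ eigenspace of $\Aop_w$ is isomorphic to that of $\Rop_S(1)$, which is one-dimensional. I expect the genuine obstacle to be the type A quasi-B verification---above all exhibiting a density $M\in L^{1,\infty}$ with the correct joint decay in its two variables---together with the care needed because the depth sets do not exhaust $V$, forcing the reconstruction to proceed through the limit operator of Proposition~\ref{conv:reconstruction_operators} rather than the finite recursion~\eqref{recurs:I}.
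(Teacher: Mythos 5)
Your proposal is correct, and its skeleton is the paper's: take $S=\{1,2\}$, verify the type A quasi-B property, apply Theorem~\ref{main:AB:countable}, compute the $2\times 2$ reduced matrix at $\lambda=1$, and reconstruct. Within that common strategy you make three choices that differ from the paper's, all legitimate. First, your approximating kernels simply delete the edges $w(i,i+1)$ for $i\geq n$, which makes the $w_n$ sub-stochastic; the paper instead redirects that mass, setting $w_n(1,i)=1$ for $i>n$, so that every $w_n$ stays stochastic and $\lambda=1$ remains an eigenvalue of each $\Aop_{w_n}$ (this is irrelevant for the abstract theorem, but it is what makes the paper's explicit truncated eigenfunctions $u_n=\Phi_{S,w_n}(1)(v_0)$ and the convergence $q_n\to q$ meaningful for the numerical scheme of Section~\ref{countablegraphs}; your truncation also changes only one entry per column, whence your factor $\sup_{k\geq n}b_k$ versus the paper's $2\max_{i\geq n}b_i$). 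Second, you derive the series for $u(i)$, $i\geq 3$, by solving the fixed-point relation $u=\bar v+\Dop\,\Qop\,u$ forward and killing the remainder with $u\in L^1$ and $\prod_i b_i=0$; the paper instead computes each $u_n$ by the finite type-B recursion (which terminates because $n_S$ is finite for $w_n$) and lets $n\to\infty$. Your derivation is more direct and avoids tracking the $n$-dependence, at the cost of having to justify that the limit reconstruction $\Phi_{S,w}(1)(v)$ satisfies the fixed-point equation, which indeed follows from the formula $u=(I-\Dop\,\Qop_K)^{-1}\bar v$ in the proof of Proposition~\ref{conv:reconstruction_operators}. Third, for uniqueness the paper simply invokes irreducibility and aperiodicity of the chain, while you transport one-dimensionality of the eigenspace of $\Rop_{S,w}(1)$ (Perron--Frobenius on a positive column-stochastic $2\times2$ matrix) back to $\Aop_w$; for this to be airtight you should note that restriction $u\mapsto u_S$ is injective on the eigenspace, which again follows from the invertibility of $I-\Dop\,\Qop_K$. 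One cosmetic slip: the majorant $M$ in your type A verification needs no Gaussian decay; since $\rho^{(n-1)i}\leq\rho^{i}$ for $n\geq 2$, the exponential choice $M(i,j)=\rho^{i}$ (essentially the paper's) suffices, with the quadratic-in-$n$ factor absorbed into $t_n$.
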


\bigskip

The rest of this section is dedicated to the proof of this proposition.

\bigskip
The  matrix $(w(i,j))_{i,j \in \N}$ is stochastic in the sense that the sum of the entries of each column is $1$. 
This Markov chain is irreducible and aperiodic and hence admits a unique stationary probability measure.
The  weight function $w:\N\times \N \to [0,+\infty[$
determines the kernel $K(i,.)=\sum_{j \in\N} w(i,j)\,\delta_j(.)$, where $\delta_j$ stands for the Dirac measure supported on $j$.
To apply the previous results consider, as reference measure  $\mu$ on $\N$, the counting measure. 
Clearly, the weight function $w$ is $(1,\infty)$-bounded.

\bigskip

Consider the following sequence of Markov chains (see Figure~\ref{markov:AB:finite:app}) whose stochastic transition probability matrices $(w_n(i,j))_{i,j \in \N}$, $n \geq 2$, are defined by

\begin{enumerate}
\item[$\bullet$] \,
$w_n(i,1)=a_i$, for all $i \in \N$;\,
\smallskip
\item[$\bullet$] \, $w_n(2,2)=1-b_1$;
\smallskip
\item[$\bullet$] \, $w_n(i-1,i)=b_{i-1}$, for $i \in\{2,\ldots,n\}$; \,
\smallskip
\item[$\bullet$] \, $w_n(1,i)=1-b_{i-1}$, for $i \in\{3,\ldots,n\}$; \,
\smallskip
\item[$\bullet$] \, $w_n(1,i)=1$, for all $i > n$; \, and
\smallskip
\item[$\bullet$] \, $w_n(i,j)=0 \hspace{0.2cm}$ otherwise.
\end{enumerate}

\begin{figure}[h]
\centerline{\includegraphics[width=0.6\textwidth]{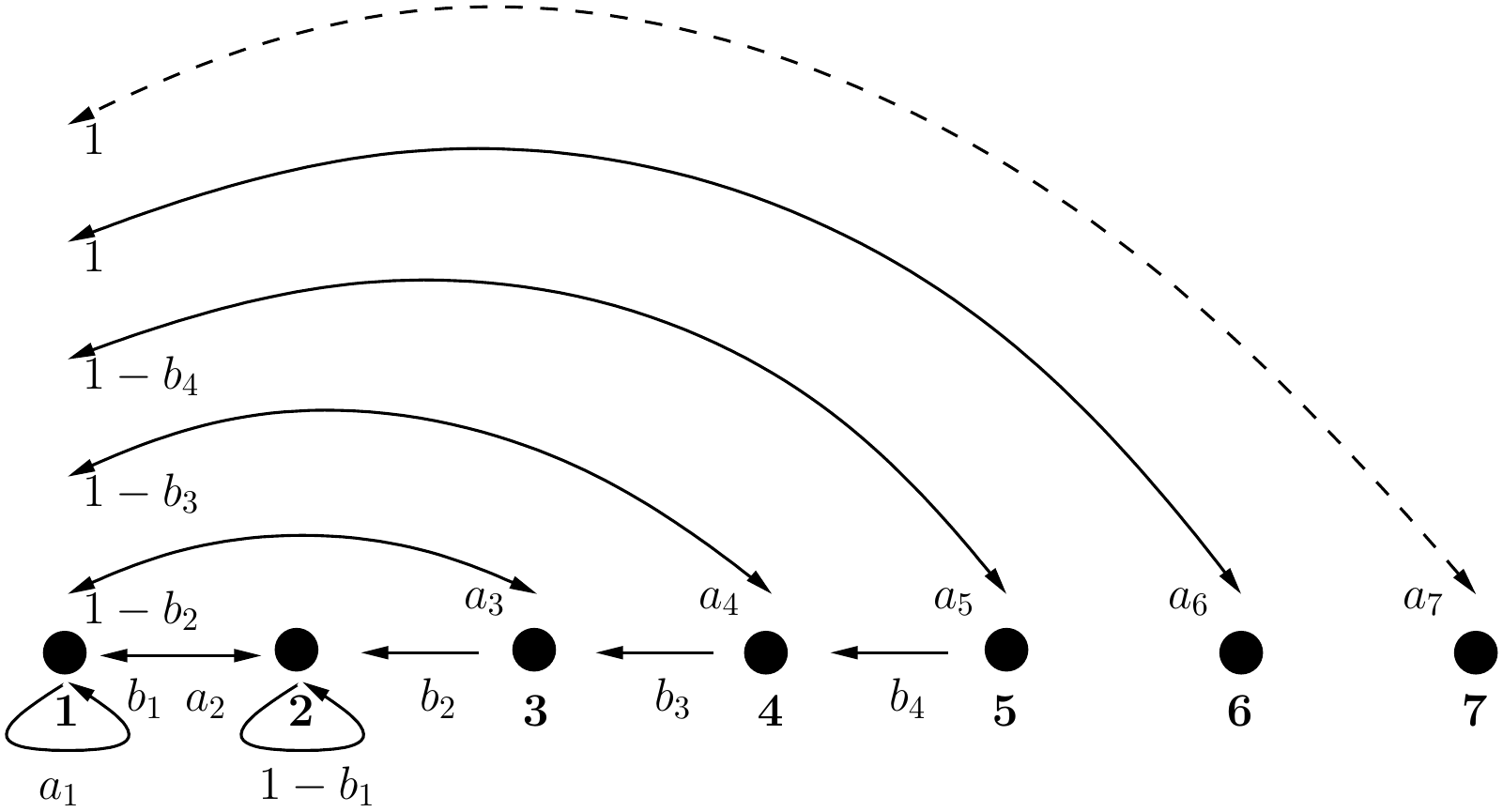}}
\caption{The Markov chain approximation $w_5$.}
\label{markov:AB:finite:app}
\end{figure}

\bigskip

Let $S=\{1,2\}$. We have that $S$ is a structural set of type A quasi-B for $w$ (in the sense of Definition~\ref{structural:AB}). Indeed,

\begin{enumerate}
\item $S$ is a structural set of type A for $w$ (in the sense of Definition~\ref{structural:A}).
Consider the function $M \in L^{1,\infty}(\N \times \N)$ defined by  
$M(i,j)=\rho^{i-1}$,
where $0<\rho<1$ is given by condition (B2).
Since the function $B\mapsto \tau_{S,n,w}(i,B)$ is a measure,
and taking in mind that the transition probabilities $(a_i)_{i \in \N}$ and $(b_i)_{i \in \N}$ satisfy conditions (B1)-(B2), we just need to observe that for all $n \geq 2$,

\bigskip

\begin{enumerate}

\item[$\bullet$] 
$\tau_{S,n,w}(1,1)= 
\sum_{\ell=1}^\infty 
\left(1-b_{\ell+1}\right)  \left(\prod_{k=1}^{n-2} b_{k+\ell+1}\right) a_{n+\ell}
\leq  \sum_{\ell=1}^\infty 
  \left(\prod_{k=1}^{n-2} b_{k+\ell+1}\right)$
  
  $\hspace{2cm} \leq C  \left(\rho^3+\frac{\rho^{n+1}}{1-\rho} \right) \left(\prod_{k=1}^{n-3} C \rho^k\right)\, M(1,1)$;

\medskip

\item[$\bullet$]  for $j > 1+n$,

$\tau_{S,n,w}(1,j) =
\left(1-b_{j-n}\right).\left(\prod_{k=1}^{n-1} b_{j-k}\right) \leq \prod_{k=1}^{n-1} b_{j-k} \leq  \left(\prod_{k=1}^{n-1} C \rho^k\right)\, M(1,j)$;

\medskip

\item[$\bullet$]  for $i\geq 2$,

$\tau_{S,n,w}(i,i+n)=
\prod_{k=0}^{n-1} b_{i+k} 
=\left(\prod_{k=1}^{n-1} b_{i+k} \right)b_i
\leq \left(\prod_{k=1}^{n-1} 
b_{i+k} \right) \! C \rho^i $

$\hspace{2.6cm} \leq  C\, \left(\prod_{k=1}^{n-1} C \rho^k\right) M(i,i+n)$; and

$\tau_{S,n,w}(i,1)=
\left(\prod_{k=0}^{n-2} b_{i+k}\right) a_{i+n-1} \leq
\prod_{k=0}^{n-2} b_{i+k}
\leq  C \left(\prod_{k=1}^{n-2} C \rho^k\right) M(i,1)$;

\medskip
\item[$\bullet$] 
$\tau_{S,n,w}(i,j) = 0$  \, in all other cases.
\end{enumerate}
\bigskip
Therefore, we can take in Definition~\ref{structural:A}  
$t_n:=C^2 \left(\rho^3+\frac{\rho^{n+1}}{1-\rho} \right)\left(\prod_{k=1}^{n-3} C \rho^k\right)$,
which converges to $0$ super exponentially.

\bigskip

\item $\lim_{n\to +\infty} \norm{w-w_n}_{1,\infty} = 0$. 
Observe first that, for all $n \geq 2$, 

$${(w(i,j)-w_n(i,j))}_{i,j \in \N}=\begin{pmatrix}
\begin{array}{ccccc:cccccc}
0 & {}  & \hdots &   {}  & 0   & -b_n &  -b_{n+1}&   -b_{n+2} &
-b_{n+3} & \hdots  {}  \\[4pt] 
{} & {} &  {} & {}  &  {}  &  0    & 0  &   0  & 0  & \hdots & {} \\
\vdots & {} & \ddots & {}  & \vdots & \vdots  & \vdots & \vdots & \vdots & {} & {}  \\
{} & {}  &  {} & {}  &  {}  &  0   & 0  &  0  & 0  & \hdots & {} \\
0 & {}  & \hdots & {}  & 0 &  b_n   & 0 &  0 & 0 & \hdots   \\[5pt] 
\hdashline 
0  & {} & \hdots & {} & 0 & 0 & b_{n+1}  & 0 & 0 & \hdots & {} \\
\vdots & {} & {}  & {} & \vdots & 0 & 0 & b_{n+2} & 0 & \hdots & {}   \\
{} & {} & {} & {}  & {} & 0 & 0& 0 & b_{n+3}  & {} & {}  \\
\vdots & {} & {} & {} & \vdots  & \vdots & \vdots & \vdots & \ddots & \ddots & {}  \\
\end{array}
\end{pmatrix}.$$
Now, since 
$$\norm{w-w_n}_{1,\infty} = \sup_{j\in \N}  \sum_{i\in \N}
\abs{w(i,j)-w_n(i,j)} ,$$
a simple calculation shows that 
$\norm{w-w_n}_{1,\infty} = 2\,\max_{i \geq n} b_i \leq 2 \,C \rho^n,$
which converges to $0$.

\bigskip

\item For all $n \geq 2$, $S$ is a structural set of type $B$ for $w_n$ (in the sense of Definition~\ref{structural:B}).
Fix $n \geq 2$. Let the function $M=M_{n} \colon \N \setminus S \to [0,+\infty)$ be defined by 
$M(i)=b_i$
and
consider the function $n_S=n_{S,w_n} \colon \N \setminus S \to \N$ introduced in Definition~\ref{structural:B}(3). 

We have that $n_S(i)=n-i+1$ for $i \in \{3,\ldots,n\}$, and $n_S(i)=1$ for 
$i \geq n+1$ (see Figure~\ref{markov:AB:finite:app}). 
Thus, taking in mind that the transition probabilities $(b_i)_{i \in \N}$ satisfy condition (B2), we have that
$$\sum_{i \in \N \setminus S} n_S(i)\,M(i) = \sum_{i=3}^{n}  (n-i+1) \, b_i + \sum_{i=n+1}^{\infty} b_i< +\infty.$$
We are left to check $(2)$ in Definition~\ref{structural:B}. We just need to  observe that, for all $i \in \{3,\ldots,n-1\}$,
$$ \abs{w_n(i,i+1)} = b_i \leq M(i) \;.$$

\end{enumerate}

\bigskip

We also have that:

\begin{enumerate}
\item \,\,$\lambda=1$ is an eigenvalue of  $\Aop_w$ and $\Aop_{w_n}$ for all $n \geq 2$.

\smallskip

\item \,\,  Since $1$ is an eigenvalue of $\Aop$ 
we can define a reduction operator 
$\Rop_{S,w}(1) \colon L^1(S) \to L^1(S)$
which keeps $1$ as an eigenvalue.
A simple calculation\footnote{We have used here that if $w=(w(i,j))_{i,j\in\N}$ is a  stochastic matrix such that $w(j,j)=0$ for all $j\notin S$  then $\Rop_{S,w}(1)$ is also a stochastic matrix. } shows that the $2 \times 2$ reduced matrix 
$\Rop_{S,w}(1)$ is given by

$$\Rop_{S,w}(1)=\left[
\begin{array}{lc}
1 - \sum_{\ell=0}^{\infty} \left(\prod_{k=1}^{\ell} b_{k+1} \right) a_{\ell +2} &  \hspace{0.2cm} b_1   \\[4pt] 

\sum_{\ell=0}^{\infty} \left(\prod_{k=1}^{\ell} b_{k+1} \right) a_{\ell +2} &  \hspace{0.2cm} 1-b_1    \\[4pt] 
\end{array}
\right].$$

\medskip
Let $v_0$ be an associated eigenvector.

\smallskip

\item \,\, Given  $n\geq 2$, the reconstruction operator 
 $\Phi_{S,w_ n} = \Phi_{S,w_ n}(1) \colon L^1(S) \to L^1(\N)$ can be characterized by  $ u_n=\Phi_{S,w_n}(v_0)$ with 
 $v_0=(v(1),v(2))$, 
$u_n=(u_n(i))_{i\in\N}$  and

$$ \left\{ \begin{array}{rclll}
u_n(i) &=&   v(i)  & \text{ if } & i \in S=\{1,2\}\\
\\
u_n(i) &=&  w_n(i,1) \,v(1)  +  \overbrace{w_n(i,2)}^{=\, 0}\,v(2) = a_i \,v(1)
  & \text{ if } & i \geq  n\\
  \\
  u_n(i) &=&  w_n(i,i+1)\,u_n(i+1)+
w_n(i,1)\,v(1) +  \overbrace{w_n(i,2)}^{=\, 0}\,v(2)
\\
{} & =&
\sum_{k=1}^{n-i+1}\left(\prod_{\ell=0}^{k-2}b_{i+\ell}\right) a_{i+k-1} \, v(1) 
  & \text{ if } & i \in \{3,\ldots,n-1\}\\
\end{array}\right.  . $$

\end{enumerate}

\bigskip

By Theorem~\ref{main:AB:countable}, the vector
$q_n=\frac{u_{n}}{\norm{u_{n}}_1}$ converges to the stationay probability measure $q$ defined in the statement of the proposition.

\section*{Acknowledgements}
PD was supported by ``Funda\c{c}\~{a}o para a Ci\^{e}ncia e a Tecnologia''
 through the Project UID/MAT/04561/2013.
 
MJT was partially supported by the Research Centre of Mathematics of the University of Minho with the Portuguese Funds from the ``Funda\c c\~ao para a Ci\^encia e a Tecnologia", through the Project UID/MAT/00013/2013.


\bigskip


\begin{thebibliography}{ABC}

\bibitem{BW3} {L. A. Bunimovich  and B. Z. Webb}, 
\emph{Isospectral graph reductions and improved estimates of matrices' spectra}, Linear Algebra  Appl. {\bf 437} (2012), 1429-1457.

\bibitem{BW} {L. A. Bunimovich  and B. Z. Webb}, 
\emph{Isospectral graph transformations, spectral equivalence, and global stability of dynamical networks}, Nonlinearity {\bf 25} (2012), 211-254.

\bibitem{BW2} {L. A. Bunimovich  and B. Z. Webb}, 
\emph{Isospectral transformations}, Springer Monographs in Mathematics, Springer, New York, 2014.

\bibitem{DT} {P. Duarte  and M. J. Torres}, 
\emph{Eigenvectors of isospectral graph transformations}, Linear Algebra Appl. {\bf 474} (2015), 110-123.

\bibitem{DS} {N. Dunford and J. T. Schwartz}, 
\emph{Linear operators, 1 : general theory}, 
Pure and applied mathematics, Academic press, London, New York, Sydney, 1957.

\bibitem{W} {L. Weis}, 
\emph{Decompositions of positive operators and some of their applications}, 
Functional Analysis: Surveys and Recent Results III, North-Holland
(1983), 95-115.


\end{thebibliography}
\end{document}